\newcommand{\fg}{{\mathfrak{g}}}
\newcommand{\bp}{{\mathbb{P}}}
\newcommand{\fh}{{\mathfrak{h}}}
\newcommand{\fn}{{\mathfrak{n}}}
\newcommand{\fz}{{\mathfrak{z}}}
\newcommand{\fsl}{{\mathfrak{sl}}}
\newcommand{\fgl}{{\mathfrak{gl}}}
\newcommand{\tr}{{\rm tr} \,}
\newcommand{\Tr}{{\rm Tr} \,}
\newcommand{\End}{{\rm End} \,}
\newcommand{\Id}{{\rm Id} \,}
\newcommand{\Ad}{{\rm Ad} \,}
\newcommand{\rk}{{\rm rk} \,}
\newcommand{\gr}{{\rm gr} \,}
\newcommand{\bc}{\mathbb{C}}
\newcommand{\diag}{{\rm diag} \,}
\DeclareMathOperator{\spann}{span}
\DeclareMathOperator{\ad}{ad}
\DeclareMathOperator{\Spec}{Spec}
\DeclareMathOperator{\Res}{Res}
\DeclareMathOperator{\bigrr}{bigr}
\newcommand{\wt}{\widetilde}
\newcommand{\dbtilde}[1]{\accentset{\approx}{#1}}
\newtheorem{defn}[subsection]{Definition}
\newtheorem{thm}[subsection]{Theorem}
\newtheorem{lem}[subsection]{Lemma}
\newtheorem{prop}[subsection]{Proposition}
\newtheorem{cor}[subsection]{Corollary}
\newtheorem*{rem}{Remark}
\title{On classical limits of Bethe subalgebras in Yangians}
\author{Aleksei Ilin and Leonid Rybnikov}
\begin{document}
\maketitle

\begin{center} \emph{To the memory of Ernest Borisovich Vinberg}
\end{center}

\begin{abstract} The Yangian $Y(\fg)$ of a simple Lie algebra $\fg$ can be regarded as a deformation of two different Hopf algebras: the universal enveloping algebra of the current algebra $U(\fg[t])$ and the coordinate ring of the first congruence subgroup $\mathcal{O}(G_1[[t^{-1}]])$. Both of these algebras are obtained from the Yangian by taking the associated graded with respect to an appropriate filtration on $Y(\fg)$.

Bethe subalgebras $B(C)$ in $Y(\fg)$ form a natural family of commutative subalgebras depending on a group element $C$ of the adjoint group $G$. The images of these algebras in tensor products of fundamental representations give all integrals of the quantum XXX Heisenberg magnet chain. 

We describe the associated graded of Bethe subalgebras in the Yangian $Y(\fg)$ of a simple Lie algebra $\fg$ as subalgebras in $U(\fg[t])$ and in $\mathcal{O}(G_1[[t^{-1}]])$ for all semisimple $C\in G$. In particular, we show that associated graded in $U(\fg[t])$ of the Bethe subalgebra $B(E)$ assigned to the unity element of $G$ is the universal Gaudin subalgebra of $U(\fg[t])$ obtained from the center of the corresponding affine Kac-Moody algebra $\hat{\fg}$ at the critical level. This generalizes Talalaev's formula for generators of the universal Gaudin subalgebra to $\fg$ of any type. In particular, this shows that higher Hamiltonians of the Gaudin magnet chain can be quantized without referring to the Feigin-Frenkel center at the critical level.

Using our general result on associated graded of Bethe subalgebras, we compute some limits of Bethe subalgebras corresponding to regular semisimple $C\in G$ as $C$ goes to an irregular semisimple group element $C_0$. We show that this limit is the product of the smaller Bethe subalgebra $B(C_0)$ and a quantum shift of argument subalgebra in the universal enveloping algebra of the centralizer of $C_0$ in $\fg$. This generalizes the Nazarov-Olshansky solution of Vinberg's problem on quantization of (Mishchenko-Fomenko) shift of argument subalgebras. 
\end{abstract}

\section{Introduction}

\subsection{Yangians and Bethe subalgebras.}
Let $\fg$ be a simple complex Lie algebra, and $G$ be the corresponding adjoint group. The Yangian $Y(\fg)$ is the unique homogeneous Hopf algebra deformation of the universal enveloping algebra $U(\fg[t])$, see \cite{drin}. It is also a Hopf algebra deformation of the algebra $\mathcal{O}(G_1[[t^{-1}]])$ of functions on first congruence subgroup $G_1[[t^{-1}]]\subset G[[t^{-1}]]$ deforming the natural Poisson structure on $G_1[[t^{-1}]]$, see \cite{kamn}.


Bethe subalgebras are the family of commutative subalgebras of $B(C)\subset Y(\fg)$ depending on a group element $C\in G$. The particular cases of Bethe subalgebras were defined in \cite{cerednik}, \cite{drin3}, \cite{kr}, \cite{mo}, \cite{molev1} and \cite{nazol}. The most general definition of Bethe subalgebras goes back to Drinfeld: namely, one can define $B(C)$ as the subalgebra generated by all Fourier coefficients of $Tr_V (\rho(C)\otimes 1) (\rho\otimes\Id) (\mathcal{R}(u))$ for all finite dimensional representations $\rho: Y(\fg)\to \End(V)$, where $\mathcal{R}(u)$ is the universal $R$-matrix with spectral parameter. In  \cite{ir2} we gave a detailed description of these subalgebras using the $RTT$-realization of the Yangian from \cite{drin} and \cite{wend}. 

\subsection{The (universal) Gaudin subalgebra.} The universal enveloping algebra of the current
algebra $\mathfrak{g}[t]$
contains a large commutative subalgebra $\mathcal{A}_{\fg}\subset
U(\mathfrak{g}[t])$. This subalgebra comes from the center of the
universal enveloping of the affine Kac--Moody algebra
$\hat{\mathfrak{g}}$ at the critical level and gives rise to the
construction of higher hamiltonians of the Gaudin model (due to
Feigin, Frenkel and Reshetikhin, \cite{ffr}). Though there are no explicit
formulas for the generators of $\mathcal{A}_{\fg}$ known in general, the
classical analogue of this subalgebra, i.e. the associated
graded subalgebra in the Poisson algebra $A_{\fg}\subset
S(\mathfrak{g}[t])$, can be easily described. Namely, it is generated by all Fourier components of the $\mathbb{C}[[t^{-1}]]$-valued functions $\Phi_l(x(t))$ on $t^{-1}\mathfrak{g}[[t^{-1}]]= \Spec S(\mathfrak{g}[t])$ for $\Phi_l$ being free homogeneous generators of the algebra of adjoint invariants $S(\mathfrak{g})^\mathfrak{g}\subset S(\fg)$ (here we identify $t^{-1}\mathfrak{g}[[t^{-1}]]$ with $\mathfrak{g}[t]^*$ via the $\mathfrak{g}$-invariant pairing $t^{-1}\mathfrak{g}[[t^{-1}]]\times \mathfrak{g}[t]\to\mathbb{C}$ given by $(x(t),y(t)):=\Res_{t=0}x(t)y(t)dt$. 

\subsection{The associated graded of a Bethe algebra.} Let $C$ be any element of a maximal torus $T\subset G$. Denote by $\fz_\fg(C)$ the centralizer of $C$ in the Lie algebra $\fg$. It is a reductive Lie algebra containing the Cartan subalgebra $\fh \subset \fg$. The generators of Bethe subalgebra $B(C)\subset Y(\fg)$ are invariant with respect to the adjoint action of $\fz_\fg(C)$.

\medskip

\noindent {\bf Theorem A.} \emph{\begin{itemize} \item The associated graded of $B(C)$ in $U(\fg[t])$ is the universal Gaudin subalgebra $\mathcal{A}_{\mathfrak{z}_\fg(C)}\subset U(\mathfrak{z}_\fg(C)[t])\subset U(\fg[t])$;
\item The associated graded of $B(C)$ in $\mathcal{O}(G_1[[t^{-1}]])$ is generated by all Fourier coefficients of the $\mathbb{C}[t^{-1}]$-valued functions $\sigma(C)(g(t)):= Tr_V C\cdot g(t)$ (where $g(t)\in G_1[[t^{-1}]]$) for all finite dimensional $G$-modules $V$; \item The Bethe subalgebra $B(C)$ is a maximal commutative subalgebra in $Y(\fg)^{\mathfrak{z}_\fg(C)}$.
\end{itemize}} 

\medskip

In particular, this gives a construction of the universal Gaudin subalgebra independent of the representation theory of $\hat{\fg}$ at the critical level and for arbitrary simple $\fg$. We expect this leads to explicit type-free formulas for higher Gaudin Hamiltonians generalizing those of Talalaev, Chervov and Molev, see \cite{talalaev}, \cite{cm}.

\begin{rem}\emph{ We believe that our Theorem A is a part of a more general picture describing all possible degenerations of the affine quantum group $U_q(\hat{\fg})$ at the critical level. In particular, according to Ding and Etingof \cite{de} the center of $U_q(\hat{\fg})$ at the critical level is generated by traces of the $R$-matrix, so it is natural to expect that both Bethe subalgebras in the Yangian and Gaudin subalgebras are degenerate versions of this center. We hope to return to this in forthcoming papers. }
\end{rem}

\subsection{Limit Bethe subalgebras.}
Let $T^{reg} \subset T$ be the set of regular elements of the torus $T$.
From Theorem A we see that the family of Bethe subalgebras $B(C)\subset Y(\fg)$ is not flat, i.e. the Poincar\'e series of $B(C)$ is not constant in $C\in T$, because for non-regular $C\in T\backslash T^{reg}$, the subalgebra $B(C)$ becomes smaller. On the other hand a natural way to assign a commutative subalgebra of the same size as for $C \in T^{reg}$ to any $C_0\in T\setminus T^{reg}$ by taking some {\em limit} of $B(C)$ as $C\to C_0$ (this idea goes back to Vinberg \cite{vinberg} and Shuvalov \cite{shuvalov}). In general, such limit subalgebra $\lim\limits_{C\to C_0}B(C)$ is not unique since it depends on the path $C(\varepsilon)$ such that $C(0)=C_0$. The second goal of this paper is to study the simplest limits of Bethe subalgebras corresponding to $C(\varepsilon) = C_0 \exp(\varepsilon \chi), C_0 \in T \setminus T^{reg}, \chi \in \fh$ as $\varepsilon\to0$. It turns out that the resulting commutative subalgebra is the product of $B(C_0)$ and the quantum \emph{shift of argument algebra} in the universal enveloping algebra $U(\fz_\fg(C_0))\subset Y(\fg)$.

\subsection{Shift of argument subalgebras and Vinberg's problem.} The shift of argument subalgebras defined by Mishchenko and Fomenko in \cite{mf} are (generically) maximal Poisson commutative subalgebras in $S(\fg)$. For any $\chi\in\fg^*$ the corresponding shift of argument subalgebra $A_\chi\subset S(\fg)$ can be described as the subalgebra generated by all the derivatives along $\chi$ of all adjoint invariant in $S(\fg)$. More precisely, it is generated by all elements of the form $\partial_\chi^k\Phi_l$, for all generators $\Phi_l\in S(\fg)^\fg$, $l=1,\ldots \rk\fg,\ k=0,1,\ldots,m_l$, where $m_l=\deg\Phi_l-1$ are the exponents of the Lie algebra $\fg$. Then the number of generators is $\sum\limits_{l=1}^{\rk\fg}(m_l+1)=\frac{1}{2}(\dim \fg+\rk\fg)$, which is the maximal possible transcendence degree for Poisson commutative subalgebras in $S(\fg)$. 

\emph{Vinberg's problem} stated in \cite{vinberg} is the problem of lifting the Poisson commutative subalgebras $A_\chi\subset S(\fg)$ to commutative subalgebras in the universal enveloping algebra $U(\fg)$. In \cite{nazol} Olshansky and Nazarov construct the lifting of a shift of argument subalgebra $A_\chi$ to $U(\fg)$ as the image of Bethe subalgebra $B(\chi)$ in the (twisted) Yangian of $\fg$ under the evaluation homomorphism to $U(\fgl_n)$. This works only for classical $\fg$ since for others there is no evaluation homomorphism from the Yangian to $U(\fg)$. 

In \cite{ryb06} Vinberg's problem was solved affirmatively for arbitrary simple $\fg$ and semisimple $\chi$ with the help of the Feigin-Frenkel center of $U(\hat{\fg})$ at the critical level. Namely, the lifting $\mathcal{A}_\chi\subset U(\fg)$, called \emph{quantum} shift of argument subalgebra, was determined as the image of (a version of) the universal Gaudin subalgebra under some homomorphism. Moreover, it was proved that, for generic $\chi$, the subalgebras $A_\chi\subset S(\fg)$ can be lifted to the universal enveloping algebra $U(\fg)$ \emph{uniquely}.  

Our second main result is the following

\medskip

\noindent {\bf Theorem B.}
\emph{Let $C(\varepsilon) = C_0 \exp(\varepsilon \chi), C_0 \in T \setminus T^{reg}$ with $\chi \in \fh\subset \fz_\fg(C_0)$ being a generic regular semisimple element of the centralizer of $C_0$ (i.e. belonging to some complement of countably many proper closed subsets in $\fh^{reg}$).
Then $$\lim_{\varepsilon \to 0} B(C(\varepsilon)) = B(C_0) \otimes_{Z(U(\fz_{\fg}(C_0)))} \mathcal{A_{\chi}},$$}
where $\mathcal{A_{\chi}} \subset U(\fz_{\fg}(C_0))$ is the quantum shift of argument subalgebra corresponding to $\chi$.

\medskip

\begin{rem} \emph{Theorem B can be regarded as the closest approximation to the Olshansky-Nazarov solution of Vinberg's problem for arbitrary simple $\fg$: indeed, now one can \emph{define} the lifting of $A_\chi\subset S(\fg)$ to the universal enveloping algebra as $\mathcal{A}_\chi:=U(\fg)\cap \lim_{\varepsilon \to 0} B(C(\varepsilon))$ for $C(\varepsilon)=\exp(\varepsilon \chi)$.}
\end{rem}

\subsection{} The paper is organized as follows. In section 2 we study two classical limits of the Yangian and relations between them. In section 3 we define Bethe subalgebras and give the lower bound for the size of a Bethe subalgebra. In section 4 we define the universal Gaudin subalgebra and study some its properties. In section 5 we prove Theorem A. In section 6 we study some limits of Bethe subalgebras and prove Theorem B.

\subsection{Acknowledgements} We thank Boris Feigin for stimulating discussions. The study has been funded within the framework of the HSE University Basic Research Program. Both authors were supported in part by the RFBR grant 20-01-00515. Theorem~B was proved under support of the RSF grant 19-11-00056. The first author is a Young Russian Mathematics award winner and would like to thank its sponsors and jury. The work of the second author was supported by the Foundation for the Advancement of Theoretical Physics and Mathematics “BASIS”.

\section{Two classical limits of the Yangian}

\subsection{Notations and definitions.}
Let $\fg$ be a complex simple Lie algebra, $G$ be the corresponding connected adjoint group, $\tilde G$ be the corresponding connected simply-connected group. Let $T \subset G$ be a maximal torus, $T^{reg} \subset T$ be the set of regular elements of $T$. Let $\fh \subset \fg$ be tangent Cartan subalgebra of $T$. Let $\left<\cdot,\cdot\right>$ be the Killing form on $\fg$ and $\{x_a\}$, $a=1,\ldots,\dim \fg$, be an orthonormal basis of $\fg$ with respect to $\left<\cdot,\cdot\right>$. We identify $\fg^*$ with $\fg$ using the Killing form.  
Let $m_i, i=1, \ldots, \rk \fg$ be the set of exponents of Lie algebra $\fg$. Let $\mathcal{O}(G)$ and $\mathcal{O}(\tilde G)$ be the algebras of polynomial functions on $G$ and $\tilde G$ respectively. 

Let $Y(\fg)$ be the Yangian of $\fg$. 
Let $V = \bigoplus_{i=1}^{\rk \fg} V(\omega_i,0)$ be the direct sum of fundamental representations of $Y(\fg)$. Let $R(u-v)$ be the image of the universal $R$-matrix in $\End(V)^{\otimes 2}$. Using this data we define the $RTT$-realization $Y_V(\fg)$ as follows. It turns out that $Y_V(\fg) \simeq Y(\fg)$, see \cite{drin} and \cite{wend} for details.

\begin{defn}
The Yangian $Y_V(\fg)$ is a unital associative algebra generated by the elements
$t_{ij}^{(r)},  1 \leq i, j \leq \dim V; r \geq 1$  with the defining relations 
$$R(u-v) T_1(u) T_2(v) = T_2(v) T_1(u) R(u-v) \quad \text{in} \ \End(V)^{\otimes 2} \otimes Y_V(\fg)[[u^{-1}, v^{-1}]],$$
$$S^2(T(u))= T(u+\frac{1}{2}c_{\fg}),$$
where $S(T(u))=T(u)^{-1}$ is the antipode map and
$c_{\fg}$ is the value of the Casimir element of $\fg$ on the adjoint representation.

Here $$T(u) = [t_{ij}(u)]_{i,j = 1, \ldots, \dim V} \in \End V \otimes Y_V(\fg),$$
$$t_{ij}(u) = \delta_{ij} + \sum_r t_{ij}^{(r)} u^{-r}$$ and $T_1(u)$ (resp. $T_2(u)$) is the image of $T(u)$ in the first (resp. second) copy of $\End V$.
\end{defn}

\subsection{Two filtrations on the Yangian.}

The first filtration $F_1$ on $Y_V(\fg)$ is determined by putting $\deg t_{ij}^{(r)} = r$. More precisely, the $r$-th filtered component $F_1^{(r)}Y(\fg)$ is the linear span of all monomials $t_{i_1j_1}^{(r_1)}\cdot\ldots\cdot t_{i_mj_m}^{(r_m)}$ with $r_1+\ldots+r_m \leq r$.
By $\gr_1$ we denote the operation of taking associated graded algebra with respect to $F_1$. From the defining relations we see that $\gr_1 Y(\fg)$ is a commutative algebra. Moreover, we have a Poisson algebra isomorphism $\gr_1 Y_V(\fg) \simeq \mathcal{O}(G_1[[t^{-1}]])$ where the grading on $\mathcal{O}(G_1[[t^{-1}]])$ is given by the $\bc^*$ action dilating $t$ (see Section~\ref{ss:congruence} for details):

\begin{thm}\label{th:F1} \cite[Proposition 2.24]{ir2} There is an isomorphism of graded Poisson algebras $\gr_1 Y_V(\fg) \simeq \mathcal{O}(G_1[[t^{-1}]])$.
\end{thm}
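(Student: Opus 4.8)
The plan is to build the isomorphism on generators, check it is well defined and surjective, match Poincaré series to obtain bijectivity, and finally verify that the two Poisson brackets agree. Throughout I realize $G$ inside $\End(V)$ through the faithful representation $V=\bigoplus_i V(\omega_i,0)$, so that a point $g(t)\in G_1[[t^{-1}]]$ is recorded by its matrix $\rho(g(t))=\Id+\sum_{r\ge1}g^{(r)}t^{-r}\in\End(V)[[t^{-1}]]$, and $\mathcal{O}(G_1[[t^{-1}]])$ is generated by the matrix-coefficient functions $g_{ij}^{(r)}$ (the coefficient of $t^{-r}$ in the $(i,j)$-entry), graded by $\deg g_{ij}^{(r)}=r$ to match the $\bc^*$-action dilating $t$. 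The candidate map is then $\phi:\gr_1 Y_V(\fg)\to\mathcal{O}(G_1[[t^{-1}]])$, $\overline{t_{ij}^{(r)}}\mapsto g_{ij}^{(r)}$, where $\overline{\phantom{t}}$ denotes the principal symbol with respect to $F_1$.

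First I would record commutativity of $\gr_1 Y_V(\fg)$, which is immediate from the expansion $R(u-v)=\Id+\Omega(u-v)^{-1}+\ldots$: the top-degree part of the $RTT$ relation reads $\overline{T_1(u)}\,\overline{T_2(v)}=\overline{T_2(v)}\,\overline{T_1(u)}$, since the $\Omega$-term lowers the filtration degree by one. Hence $\gr_1$ is a commutative algebra generated by the $\overline{t_{ij}^{(r)}}$, and $\phi$ is at least a candidate homomorphism of commutative algebras.

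The well-definedness of $\phi$ is the heart of the matter. For $\mathfrak{gl}_N$ the congruence subgroup $(GL_N)_1[[t^{-1}]]$ is an affine space and the $RTT$ relations alone degenerate to a polynomial algebra; the real content here is that the limit matrix $\overline{T(u)}$ is $G$-valued and not merely $GL(V)$-valued. I would extract this from the remaining defining data of $Y_V(\fg)$: the antipode relation $S^2(T(u))=T(u+\tfrac12 c_\fg)$ degenerates, using $S(T(u))=T(u)^{-1}$ and the fact that the shift $\tfrac12 c_\fg$ is of lower filtration order, to the statement that $\overline{T(u)}$ is invertible with the correct inverse; and the $G$-invariance of the leading term $\Omega$ of $R$ forces $\overline{T(u)}$ to preserve exactly the tensor-structure relations on $V\otimes V$ that cut out $G\subset GL(V)$. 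Tracking these carefully shows that the symbols $\overline{t_{ij}^{(r)}}$ satisfy precisely the relations defining $G_1[[t^{-1}]]\subset (GL(V))_1[[t^{-1}]]$, so $\phi$ descends to a well-defined surjection.

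It remains to prove injectivity and to match the Poisson structures. For injectivity I would compare Poincaré series: the PBW theorem for $Y(\fg)$ determines the graded dimensions of $\gr_1 Y_V(\fg)$ in each degree, and these coincide with the graded dimensions of $\mathcal{O}(G_1[[t^{-1}]])\cong\mathcal{O}(t^{-1}\fg[[t^{-1}]])$ (the latter because, via the exponential, $G_1[[t^{-1}]]$ is as a graded scheme the space $t^{-1}\fg[[t^{-1}]]$), and a surjection between graded vector spaces with equal finite graded dimensions is an isomorphism. Finally, the Poisson bracket on $\gr_1$ is $\{\bar a,\bar b\}:=\overline{[a,b]}$, and evaluating it on generators from the subleading term of the $RTT$ relation gives $\{\overline{T_1(u)},\overline{T_2(v)}\}=[\,\Omega(u-v)^{-1},\,\overline{T_1(u)}\,\overline{T_2(v)}\,]$, the Sklyanin bracket, which is exactly the Poisson structure on $G_1[[t^{-1}]]$ coming from the standard Poisson--Lie structure on $G[[t^{-1}]]$; compatibility with the grading is automatic since $\Omega(u-v)^{-1}$ drops degree by one. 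I expect the main obstacle to be the well-definedness step: verifying that the degenerated relations carve out $G$ rather than all of $GL(V)$, i.e. that the antipode relation together with the symmetry of $R$ on $\bigoplus_i V(\omega_i,0)$ impose neither more nor fewer relations than membership in $G_1[[t^{-1}]]$.
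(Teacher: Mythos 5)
The paper never proves this statement internally: it is imported wholesale from \cite[Proposition 2.24]{ir2} (in the refined form of Proposition~\ref{locprop}), so your proposal has to stand on its own --- and it does not, because both of its load-bearing steps have real gaps. The first is the well-definedness step, which you correctly flag as the heart of the matter but do not actually close. For $\phi:\gr_1 Y_V(\fg)\to\mathcal{O}(G_1[[t^{-1}]])$ to be well defined you need \emph{every} relation among the symbols $\overline{t_{ij}^{(r)}}$ in $\gr_1 Y_V(\fg)$ to lie in the ideal of $G_1[[t^{-1}]]$; but the associated graded of a filtered algebra presented by generators and relations is \emph{not} presented by the top-degree parts of its defining relations --- cancellations of leading terms produce hidden relations, and it is exactly these, not the naive degenerations, that cut $G_1[[t^{-1}]]$ out of the matrix affine space. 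Your two proposed mechanisms cannot supply them: the degenerate antipode relation read as ``$\overline{T}(u)$ is invertible with the correct inverse'' is vacuous, since any series $\Id+\sum_r \overline{t}^{(r)}u^{-r}$ is invertible over $\bc[[u^{-1}]]$; and the $\fg$-invariance of $\Omega$ enters only at subleading order, i.e.\ it governs the Poisson bracket, while the RTT relation itself degenerates entirely into commutativity, as you yourself note. The equations of $G$ actually come from cancellation phenomena --- consequences of the RTT relation at special values of the spectral parameter where $R$ degenerates (fusion), combined with the antipode condition --- and proving that these generate \emph{exactly} the ideal of $G_1[[t^{-1}]]$ is the substance of \cite[Proposition 2.24]{ir2} and of Wendlandt's theorem \cite{wend} that $Y_V(\fg)\simeq Y(\fg)$; ``tracking these carefully'' is the theorem, not an argument for it.

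The second gap is that your injectivity step is circular. The PBW theorem for $Y(\fg)$ concerns the filtration $F_2$ (it gives $\gr_2 Y_V(\fg)\simeq U(\fg[t])$, Theorem~\ref{th:F2}); it does not by itself determine the graded dimensions of $\gr_1 Y_V(\fg)$. To extract those you must know the $F_1$-degrees of a PBW basis, i.e.\ how the two filtrations interact --- and in this paper precisely that information (the Poincar\'e series $\prod_{r\ge1}(1-q^r)^{-\dim\fg}$ and the bigraded identification of Lemma~\ref{le:bigr} and the following Proposition) is \emph{deduced from} Theorem~\ref{th:F1}, the statement being proved. So as written, the Poincar\'e series comparison either presupposes the conclusion or requires an independent upper bound on $\gr_1 Y_V(\fg)$ coming from Wendlandt's structural analysis of the RTT presentation. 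The overall skeleton (map on generators, ideal comparison, dimension count, then the Sklyanin bracket matching the rational $r$-matrix bracket of Section~\ref{ss:congruence}) is the right shape for a proof, and your Poisson-bracket verification is fine, but the two essential inputs are exactly the ones left unproved.
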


\begin{cor}
\label{th:F1}
Poincar\'e series of $\gr_1 Y_V(\fg)$ is $\prod\limits_{r=1}^\infty (1-q^r)^{-\dim\fg}$.
\end{cor}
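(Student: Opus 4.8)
The plan is to read off the Poincaré series directly from the graded isomorphism $\gr_1 Y_V(\fg) \simeq \mathcal{O}(G_1[[t^{-1}]])$ of the preceding theorem. Since that isomorphism matches the $F_1$-grading on the left (where $\deg t_{ij}^{(r)} = r$) with the grading on the right induced by the $\bc^*$-action dilating $t$, the problem reduces entirely to computing the Hilbert series of the coordinate ring $\mathcal{O}(G_1[[t^{-1}]])$ as a graded algebra. So the first step is to identify this ring explicitly.

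To do this I would describe the first congruence subgroup as a pro-affine space. It is pro-unipotent with Lie algebra $t^{-1}\fg[[t^{-1}]]$, and the exponential map $\exp \colon t^{-1}\fg[[t^{-1}]] \to G_1[[t^{-1}]]$ converges $t^{-1}$-adically and is an isomorphism of (pro-)schemes. Using the orthonormal basis $\{x_a\}$ of $\fg$, every element is written uniquely as $\exp\bigl(\sum_{r \geq 1} \sum_{a=1}^{\dim\fg} c_a^{(r)} x_a t^{-r}\bigr)$, so the $c_a^{(r)}$ form global coordinates and $\mathcal{O}(G_1[[t^{-1}]])$ is the free polynomial algebra on the dual coordinate functions. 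Tracking the grading: under dilation of $t$ the coefficient of $t^{-r}$ carries weight $r$, so each of the $\dim\fg$ coordinate functions attached to the $t^{-r}$-component sits in degree $r$, consistently with $\deg t_{ij}^{(r)} = r$. Hence $\gr_1 Y_V(\fg)$ is a polynomial algebra with exactly $\dim\fg$ free generators in each degree $r \geq 1$.

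The Hilbert-series bookkeeping is then immediate: a single free generator in degree $r$ contributes the factor $(1-q^r)^{-1}$, so the $\dim\fg$ generators in degree $r$ contribute $(1-q^r)^{-\dim\fg}$, and taking the product over all $r \geq 1$ gives $\prod_{r=1}^{\infty}(1-q^r)^{-\dim\fg}$. The only genuine point requiring care is the identification of $G_1[[t^{-1}]]$ with the pro-affine space $t^{-1}\fg[[t^{-1}]]$ together with the compatibility of the exponential chart with the $\bc^*$-grading; once this is established the counting is routine. I would also remark that, granting the graded isomorphism, one can avoid the explicit chart altogether and instead compute the graded components of $\mathcal{O}(G_1[[t^{-1}]])$ degree by degree, with freeness of the generators following from pro-unipotence of the congruence subgroup.
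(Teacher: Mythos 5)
Your proof is correct and follows essentially the same route as the paper: the paper likewise reduces to the graded isomorphism $\gr_1 Y_V(\fg) \simeq \mathcal{O}(G_1[[t^{-1}]])$ and then identifies $\mathcal{O}(G_1[[t^{-1}]])$ with $S(\fg[t])$ via a grading-preserving formal diffeomorphism $(\fg,0)\to(G,E)$ (the exponential chart being the standard choice, as $G_1[[t^{-1}]]$ is pro-unipotent), so that there are exactly $\dim\fg$ free generators in each degree $r\geq 1$. The Hilbert-series count $\prod_{r\geq 1}(1-q^r)^{-\dim\fg}$ then matches the paper's conclusion.
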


The second filtration $F_2$ on $Y_V(\fg)$ is determined by putting $\deg t_{ij}^{(r)} = r-1$. Similarly, the $r$-th filtered component $F_2^{(r)}Y(\fg)$ is the linear span of all monomials $t_{i_1j_1}^{(r_1)}\cdot\ldots\cdot t_{i_mj_m}^{(r_m)}$ with $r_1+\ldots+r_m \leq r+m$. 
By $\gr_2$ we denote the operation of taking associated graded algebra with respect to $F_2$. 

\begin{thm}\label{th:F2}\cite{wend}
$\gr_2 Y_V(\fg) \simeq U(\fg[t])$ where the grading is given by the $\bc^*$ action dilating $t$. Moreover, we have $t^{r-1}\fg\subset \spann \{ t_{ij}^{(r)}\} / F_2^{(r-2)}Y(\fg)$.
\end{thm}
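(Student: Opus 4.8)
The plan is to obtain the isomorphism in the usual PBW manner: first build a graded algebra homomorphism $\phi\colon U(\fg[t])\to\gr_2 Y_V(\fg)$ out of the leading symbols of the generators, and then show it is bijective by a graded-dimension count. All the content sits in a leading-order analysis of the defining $RTT$-relation with respect to $F_2$, where the $\fg\otimes\fg$-component of the $R$-matrix produces the bracket of $\fg$.

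Concretely, I would start from the expansion $R(u)=1-\Omega u^{-1}+O(u^{-2})$, where $\Omega=\sum_a x_a\otimes x_a\in\fg\otimes\fg\subset\End(V)\otimes\End(V)$ is the image of the Casimir tensor, and rewrite the $RTT$-relation as $[T_1(u),T_2(v)]=R(u-v)^{-1}T_2(v)T_1(u)R(u-v)-T_2(v)T_1(u)$. Expanding to first order yields $[T_1(u),T_2(v)]=\tfrac{1}{u-v}\,[\Omega,\,T_2(v)T_1(u)]+O\big((u-v)^{-2}\big)$. Reading off the coefficient of $u^{-r}v^{-s}$ via $\tfrac{1}{u-v}=\sum_{k\ge0}v^k u^{-k-1}$, one checks that the only contribution of top $F_2$-degree $r+s-2$ is the linear-in-$T$ term $[\Omega,T_2^{(r+s-1)}]$, while the quadratic terms and the $O((u-v)^{-2})$ tail land in strictly smaller degree. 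Since $[\Omega,1\otimes Y]=\sum_a x_a\otimes[x_a,Y]$ reproduces the adjoint action, the top symbol of $[t_{ij}^{(r)},t_{kl}^{(s)}]$ is linear in the level-$(r+s-1)$ symbols with coefficients the structure constants of $\fg$. The auxiliary relation $S^2(T(u))=T(u+\tfrac12 c_{\fg})$ is used to pin the leading data inside $\fg\subset\End(V)$ rather than in all of $\End(V)$. Writing $\xi_a^{(r)}\in\gr_2 Y_V(\fg)$ for the symbol of the $\fg$-component of $T^{(r)}$ along $x_a$, this gives $[\xi_a^{(r)},\xi_b^{(s)}]=\sum_c c_{ab}^{\,c}\,\xi_c^{(r+s-1)}$, which are exactly the relations of $\fg[t]$ under $x_a t^{\,r-1}\mapsto\xi_a^{(r)}$; in particular the $\xi_a^{(r)}$ span a copy of $\fg$ in degree $r-1$, proving the inclusion $t^{r-1}\fg\subset\spann\{t_{ij}^{(r)}\}/F_2^{(r-2)}Y_V(\fg)$ of the second assertion.

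With this bracket identity, $x_a t^{\,r-1}\mapsto\xi_a^{(r)}$ is a graded Lie algebra homomorphism $\fg[t]\to\gr_2 Y_V(\fg)$, hence extends by the universal property to a graded algebra homomorphism $\phi\colon U(\fg[t])\to\gr_2 Y_V(\fg)$. Surjectivity holds because $\gr_2 Y_V(\fg)$ is generated by the symbols of the $t_{ij}^{(r)}$, and at top degree the non-$\fg$ entries of $T^{(r)}$ are expressed through the defining relations as products of the $\xi$'s of lower level, so they too lie in the image. For injectivity I would compare graded dimensions: the isomorphism $\gr_1 Y_V(\fg)\simeq\mathcal{O}(G_1[[t^{-1}]])$ with its Poincaré series $\prod_{r\ge1}(1-q^r)^{-\dim\fg}$ shows that $Y_V(\fg)$ has a PBW basis of ordered monomials in $\dim\fg$ generators at each level $r\ge1$; taking these generators $F_2$-homogeneous of degree $r-1$ gives $\gr_2 Y_V(\fg)$ the Poincaré series $\prod_{r\ge1}(1-q^{r-1})^{-\dim\fg}=\prod_{m\ge0}(1-q^m)^{-\dim\fg}$, which is precisely the PBW series of $U(\fg[t])$. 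A graded surjection between graded algebras of equal finite dimension in each positive degree is an isomorphism.

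The main obstacle is the leading-order symbol computation together with its compatibility with PBW. Two things must be controlled at once: that the $O((u-v)^{-2})$ tail of $R$ and the quadratic-in-$T$ corrections contribute only to strictly lower $F_2$-degree, so that the symbol bracket is the undeformed $\fg$-bracket and not a deformation of it; and that the non-$\fg$ components of the $T^{(r)}$ produce no new independent top-degree symbols. Equivalently, one must verify that a PBW basis of $Y_V(\fg)$ can be chosen from the $\fg$-components of the $T^{(r)}$ and survives the passage to $\gr_2$, which is exactly what forces the surjection $\phi$ to be injective. Once the bracket identity and this degree bookkeeping are in place, the universal property and the dimension count are formal.
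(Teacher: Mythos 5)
Your proposal attempts to prove from scratch a statement that the paper itself does not prove at all: Theorem~\ref{th:F2} is quoted from Wendlandt \cite{wend}, where it is obtained by constructing a filtration-compatible isomorphism between the RTT algebra $Y_V(\fg)$ and Drinfeld's presentation of the Yangian, for which $\gr Y(\fg)\simeq U(\fg[t])$ is the (already nontrivial) PBW theorem for Yangians. Measured against that, your outline has a sound first step: the leading-order analysis of the RTT relation is the standard computation, it does yield $[\xi_a^{(r)},\xi_b^{(s)}]=\sum_c c_{ab}^{\,c}\xi_c^{(r+s-1)}$ modulo lower filtration, and hence the inclusion $t^{r-1}\fg\subset \spann\{t_{ij}^{(r)}\}/F_2^{(r-2)}Y(\fg)$. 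But both remaining steps have genuine gaps.

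First, surjectivity: you assert that ``at top degree the non-$\fg$ entries of $T^{(r)}$ are expressed through the defining relations as products of the $\xi$'s of lower level.'' For general $\fg$ the space $\End(V)$ with $V=\bigoplus_i V(\omega_i,0)$ is vastly larger than $\fg$, and the statement that the RTT relation together with $S^2(T(u))=T(u+\frac12 c_\fg)$ forces all the extra matrix entries to be generated by the $\fg$-part is precisely the hard content of \cite{wend}; it cannot be dispatched in one sentence. Second, and more fatally, the injectivity argument is wrong as stated: the $F_2$-graded components of $\gr_2 Y_V(\fg)$ and of $U(\fg[t])$ are \emph{infinite}-dimensional (the degree-zero component already contains the entire subalgebra generated by the $t_{ij}^{(1)}$, respectively all of $U(\fg)$), so there is no ``graded surjection between graded algebras of equal finite dimension in each positive degree''; indeed your own series $\prod_{m\ge 0}(1-q^m)^{-\dim\fg}$ has a divergent factor at $m=0$. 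Any honest count must be bigraded, using $F_1$ and $F_2$ simultaneously, and that is exactly where the subtlety sits: the paper's example $A=\bc[x,y]$, $U=\bc[x+y]$ shows two filtrations need not interact well, and the paper's own bigraded comparison ($\gr_{12}Y(\fg)\simeq S(\fg[t])$) uses Theorem~\ref{th:F2} as an \emph{input}, via $x[r-1]\in F_1^{(r)}U(\fg[t])$. Thus the assumption you slip in --- that a PBW basis of $Y_V(\fg)$ can be chosen from generators simultaneously $F_2$-homogeneous of degree $r-1$ and counted by the $F_1$-series of $\mathcal{O}(G_1[[t^{-1}]])$ --- is essentially equivalent to the theorem being proved, so the argument as written is circular.
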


\subsection{The associated bigraded algebra.}

The filtration $F_1$ on $Y(\fg)$ produces a filtration on $U(\fg[t])=\gr_2 Y(\fg)$ which we denote by the same letter $F_1$. Similarly, the filtration $F_2$ on $Y(\fg)$ descends to a filtration $F_2$ on $\mathcal{O}(G_1[[t^{-1}]])=\gr_1 Y(\fg)$. The corresponding associated graded algebras $ \gr_1\gr_2 Y(\fg)$ and $\gr_2\gr_1 Y(\fg)$ get a bigrading from the filtrations $F_1$ and $F_2$. We begin with some general facts about algebras with multiple filtrations.

For any algebra $A$ endowed with two filtrations, $F_1$ and $F_2$, one can define \emph{the associated bigraded algebra.} of $A$ as
$$
\bigrr A = \bigoplus_{i,j} \faktor{(F_1^{(i)}A \cap F_2^{(j)}A)}{(F_1^{(i-1)}A \cap F_2^{(j)}A + F_1^{(i)}A \cap F_2^{(j-1)}A)} 
$$

We also use the following notation: 
$$\gr_{12} A = \gr_1 \gr_2 A, \quad \gr_{21} A = \gr_2 \gr_1 A$$

\begin{lem}\label{le:bigr}
The associated bigraded algebra $\bigrr A$ is canonically isomorphic to $\gr_{12} A$ and to $\gr_{21}  A$.
\end{lem}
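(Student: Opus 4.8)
The plan is to compute the $(i,j)$-th component of $\gr_{12} A = \gr_1\gr_2 A$ directly from the definitions and match it term-by-term with the $(i,j)$-th component of $\bigrr A$; the isomorphism with $\gr_{21} A$ will then follow from the symmetry of the construction in the two filtrations. First I would fix the convention that $F_1$ and $F_2$ are ascending, exhaustive, multiplicative filtrations, and recall how $F_1$ descends to $\gr_2 A$: on the degree-$j$ piece $\gr_2^{(j)} A = F_2^{(j)}A/F_2^{(j-1)}A$ one sets $F_1^{(i)}$ to be the image of $F_1^{(i)}A \cap F_2^{(j)}A$ under the projection $F_2^{(j)}A \twoheadrightarrow \gr_2^{(j)}A$. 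Thus $(\gr_{12}A)_{i,j}$ is the quotient of the image of $F_1^{(i)}A\cap F_2^{(j)}A$ by the image of $F_1^{(i-1)}A\cap F_2^{(j)}A$, both taken inside $\gr_2^{(j)}A$.

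The heart of the argument is the kernel computation. Composing the inclusion $F_1^{(i)}A\cap F_2^{(j)}A \hookrightarrow F_2^{(j)}A$ with the two successive projections onto $(\gr_{12}A)_{i,j}$, I would observe that an element of $F_1^{(i)}A\cap F_2^{(j)}A$ maps to zero exactly when its class in $\gr_2^{(j)}A$ lies in the image of $F_1^{(i-1)}A\cap F_2^{(j)}A$, i.e.\ when it belongs to $(F_1^{(i-1)}A\cap F_2^{(j)}A) + F_2^{(j-1)}A$. Intersecting this with $F_1^{(i)}A\cap F_2^{(j)}A$ and applying Dedekind's modular law (legitimate since $F_1^{(i-1)}A\cap F_2^{(j)}A \subseteq F_1^{(i)}A\cap F_2^{(j)}A$) collapses the kernel to $(F_1^{(i-1)}A\cap F_2^{(j)}A) + (F_1^{(i)}A\cap F_2^{(j-1)}A)$, where I use $F_2^{(j-1)}\subseteq F_2^{(j)}$ to rewrite $(F_1^{(i)}A\cap F_2^{(j)}A)\cap F_2^{(j-1)}A = F_1^{(i)}A\cap F_2^{(j-1)}A$. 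This is precisely the denominator in the definition of $\bigrr A$, so the first isomorphism theorem yields $(\gr_{12}A)_{i,j}\cong(\bigrr A)_{i,j}$.

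Finally, since the definition of $\bigrr A$ is manifestly symmetric in $F_1$ and $F_2$, repeating the computation with the roles exchanged gives $(\gr_{21}A)_{i,j}\cong(\bigrr A)_{i,j}$, establishing both isomorphisms. It remains to note that these are isomorphisms of bigraded algebras and not merely of vector spaces: every map above is induced by the identity on $A$ together with the canonical projections, so compatibility with multiplication is inherited directly from the multiplicativity of $F_1$ and $F_2$, and the bigrading is respected by construction. I expect the only genuinely delicate point to be the kernel computation --- specifically, justifying the modular-law step and keeping the intersections $F_1^{(i)}A\cap F_2^{(j)}A$, $F_1^{(i-1)}A\cap F_2^{(j)}A$ and $F_1^{(i)}A\cap F_2^{(j-1)}A$ cleanly separated; the remainder is routine bookkeeping with the isomorphism theorems.
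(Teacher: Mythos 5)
Your proof is correct and follows essentially the same route as the paper's: both identify the $(i,j)$-component of the iterated associated graded with the corresponding component of $\bigrr A$ by an explicit subquotient computation, your modular-law kernel argument being the same content as the paper's chain of second/third isomorphism theorem identifications applied to the induced filtration $W_i$ on $\gr_1 A$ (the paper just works with $\gr_{21}$ where you work with $\gr_{12}$, which is symmetric). Your additional remark on multiplicativity is a routine point the paper leaves implicit.
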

\begin{proof}
Consider the algebra $\gr_1 A = F_1^{(0)}A \oplus \faktor{F_1^{(1)}A}{F_1^{(0)}A} \oplus \ldots$. The filtration $F_2$ produces a filtration $W_0 \subset W_1 \subset \ldots$ on $\gr_1 A$ such that $W_i = \oplus_j \faktor{F_2^{(i)}A \cap F_1^{(j)}A}{F_2^{(i)}A \cap F_1^{(j-1)}A} $.

Note that $$\faktor{F_2^{(i)}A \cap F_1^{(j)}A}{F_2^{(i)}A \cap F_1^{(j-1)}A} \simeq \faktor{F_2^{(i) }A \cap F_1^{(j)} A+ F_1^{(j-1)} A} {F_1^{(j-1)}A}$$ therefore $W_0 \subset W_1 \subset \ldots$ is indeed a filtration.

We have the following canonical isomorphisms
\begin{eqnarray*} \faktor{W_i}{W_{i-1}}= \oplus_j \frac{\faktor{F_2^{(i)}A \cap F_1^{(j)}A}{F_2^{(i)}A \cap F_1^{(j-1)}A}}{\faktor{F_2^{(i-1)}A \cap F_1^{(j)}A}{F_2^{(i-1)}A \cap F_1^{(j-1)}A}}=\\
=\oplus_j\frac{F_2^{(i)}A \cap F_1^{(j)}A}{F_2^{(i-1)}A \cap F_1^{(j)}A+F_2^{(i)}A \cap F_1^{(j-1)}A}.
\end{eqnarray*}


Then associated graded algebra $\gr_{21} A = W_0 \oplus \faktor{W_1}{W_0} \oplus \ldots$ is canonically isomorphic to $\bigrr A$. It is also isomorphic to $\gr_{12} A$ by the same argument.
\end{proof}

In contrast with last Lemma, if $U \subset A$ is a subspace, it is not true in general that $\gr_{12} U = \gr_{21} U$ as subspaces of $\bigrr A$, since the associated homomorphism of bigraded spaces $\bigrr U\to\bigrr A$ is not necessarily injective. Indeed, consider the algebra $A=\bc[x,y]$ with two filtrations setting by $\deg_1 x = 1,\ \deg_1 y=0$ and $\deg_2 x=0,\ \deg_2 y = 1$ and take $U = \bc[x+y]$. Then $\bigrr U$ is the polynomial algebra with one generator $z$ of the degree $(1,1)$ (coming from $x+y$). We have $x+y\in F_1^{(1)}A\cap F_2^{(0)}A+F_1^{(0)}A\cap F_2^{(1)}A$, so the image of $z$ in $\frac{F_1^{(1)}A\cap F_2^{(1)}A}{F_1^{(1)}A\cap F_2^{(0)}A+F_1^{(0)}A\cap F_2^{(1)}A}$ is zero. Hence the image of $\bigrr U$ in $\bigrr A$ is $\bc\cdot1$. At the same time $\gr_{12} U = \bc[x],\ \gr_{21} U = \bc[y]$. 

On the other hand, the following is still true:
\begin{prop}
\label{subgr}
Let $U$ be a vector subspace of $A$ such that $\gr_{12} U \subset \gr_{21} U$ as subspaces of $\bigrr A$. Then $\gr_{12} U = \gr_{21} U$.
\end{prop}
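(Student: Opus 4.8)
The plan is to reduce the asserted equality to a comparison of dimensions along the first grading, and then to close that comparison with a symbol-map inequality. Since $\gr_{12}U\subset\gr_{21}U$ is an inclusion of \emph{bigraded} subspaces of $\bigrr A$, it holds in every bidegree, and hence, after summing over the second index, in every homogeneous component of the $F_1$-grading. Writing $M_{i,\bullet}=\bigoplus_j M_{ij}$ for the $F_1$-degree-$i$ part of a bigraded space $M$, we thus have $(\gr_{12}U)_{i,\bullet}\subset(\gr_{21}U)_{i,\bullet}$ for every $i$. In the situation at hand the filtered pieces $F_1^{(N)}A$ are finite-dimensional (for $A=Y_V(\fg)$ this follows from the Poincar\'e series computation for $\gr_1 Y_V(\fg)$), so all these slices are finite-dimensional, and it suffices to prove that $\dim(\gr_{12}U)_{i,\bullet}=\dim(\gr_{21}U)_{i,\bullet}$ for every $i$: an inclusion of finite-dimensional spaces of equal dimension is an equality, and summing over $i$ then gives $\gr_{12}U=\gr_{21}U$.

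To compare the slice dimensions I would pass to cumulative sums over $F_1$-degree $\le N$. For $\gr_{21}U=\gr_2\gr_1 U$ the grading $F_1$ is produced by the \emph{inner} operation $\gr_1$, so it reads off subquotients of $U$ directly: $\dim(\gr_{21}U)_{i,\bullet}=\dim(\gr_1 U)_i=\dim\big((F_1^{(i)}A\cap U)/(F_1^{(i-1)}A\cap U)\big)$, and telescoping gives $\sum_{i\le N}\dim(\gr_{21}U)_{i,\bullet}=\dim(U\cap F_1^{(N)}A)$. For $\gr_{12}U=\gr_1\gr_2 U$ the outer operation is $\gr_1$, so the cumulative $F_1$-degree-$\le N$ part has the same dimension as the $F_1^{(N)}$-filtered piece of $\gr_2 U$; that is, $\sum_{i\le N}\dim(\gr_{12}U)_{i,\bullet}=\dim\big(\gr_2 U\cap F_1^{(N)}(\gr_2 A)\big)$, where $F_1$ denotes the filtration induced on $\gr_2 A$.

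The crux, and the step I expect to be the main obstacle, is the inequality $\dim\big(\gr_2 U\cap F_1^{(N)}(\gr_2 A)\big)\ge\dim(U\cap F_1^{(N)}A)$, which says that replacing $U$ by its $F_2$-associated graded can only \emph{enlarge} the $F_1^{(N)}$-part; this is precisely the asymmetry responsible for $\gr_{12}$ and $\gr_{21}$ differing in general. I would prove it with the $F_2$-symbol map: if $u\in U\cap F_1^{(N)}A\cap F_2^{(j)}A$ then $u\in F_1^{(N)}A+F_2^{(j-1)}A$, so the $F_2$-symbol of $u$ lies in $(\gr_2 U)_j\cap F_1^{(N)}(\gr_2 A)$, and passing to symbols yields, for each $j$, an injection of $\big(U\cap F_1^{(N)}A\cap F_2^{(j)}A\big)/\big(U\cap F_1^{(N)}A\cap F_2^{(j-1)}A\big)$ into $(\gr_2 U)_j\cap F_1^{(N)}(\gr_2 A)$; summing over $j$ gives the inequality. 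On the other hand, the slice inclusions of the first paragraph give the reverse inequality $\sum_{i\le N}\dim(\gr_{12}U)_{i,\bullet}\le\sum_{i\le N}\dim(\gr_{21}U)_{i,\bullet}$, so both cumulative sums equal $\dim(U\cap F_1^{(N)}A)$ for every $N$. Since these sums are finite and the inclusion dominates termwise, this forces $\dim(\gr_{12}U)_{i,\bullet}=\dim(\gr_{21}U)_{i,\bullet}$ for each $i$, and the reduction of the first paragraph finishes the proof.
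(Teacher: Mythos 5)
Your proof is correct where it applies, but it is a genuinely different argument from the paper's, and the difference is precisely one of generality. The paper argues element-by-element: given $x\in\gr_{21}U\setminus\gr_{12}U$ of bidegree $(k,l)$, it takes a lifting $\dbtilde x\in U$ decomposed as a sum of terms of strictly decreasing $F_1$-degrees $(k_1,\dots,k_N)$, chosen lexicographically minimal; the bigraded symbol $y=\gr_{12}\dbtilde x$ then has degree $(k_N,l_N)$ and, by the hypothesis $\gr_{12}U\subset\gr_{21}U$, admits a lifting $\dbtilde y\in U$ whose leading bigraded term cancels that of $\dbtilde x$, so that $\dbtilde x-\dbtilde y\in U$ is a lifting of $x$ with a lexicographically smaller degree string --- a contradiction unless $N=1$, in which case $x\in\gr_{12}U$. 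That argument uses no finiteness at all. You instead pinch dimensions: the hypothesis gives $\dim(\gr_{12}U)_{i,\bullet}\le\dim(\gr_{21}U)_{i,\bullet}$ slice by slice, while your symbol-map injection of $\big(U\cap F_1^{(N)}A\cap F_2^{(j)}A\big)/\big(U\cap F_1^{(N)}A\cap F_2^{(j-1)}A\big)$ into $(\gr_2 U)_j\cap F_1^{(N)}(\gr_2 A)$ (whose kernel computation is exactly right) yields the reverse inequality of cumulative sums, $\dim(U\cap F_1^{(N)}A)\le\dim\big(\gr_2 U\cap F_1^{(N)}(\gr_2 A)\big)$; equality of cumulative sums for all $N$ then forces slice-wise equality, and inclusion plus equal finite dimension finishes. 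This route is clean and makes the underlying asymmetry transparent (passing to $\gr_2$ can only enlarge the $F_1$-filtered pieces), but it genuinely requires $\dim F_1^{(N)}A<\infty$ for all $N$: with infinite-dimensional slices, inclusion plus equal dimension proves nothing. So you establish the proposition under an extra finiteness hypothesis --- satisfied for $A=Y_V(\fg)$ by the Poincar\'e series computation for $\gr_1 Y_V(\fg)$, hence sufficient for every application made of the proposition in the paper --- whereas the paper's lexicographic correction argument proves the statement for an arbitrary algebra with two filtrations, which is the generality in which the proposition is actually stated.
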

\begin{proof}
Suppose that we have an element $x \in \gr_{21} W \setminus \gr_{12} W$. Suppose that $\deg x = (k,l)$. Let $\dbtilde x$ be a lifting of $x$ to $U\subset A$. Then $\dbtilde x \in F_1^{(k)}A \cap F_2^{(l)}A + \sum\limits_{k'<k} F_1^{(k^{\prime})}A \cap F_2^{(l^{\prime})}A$, where $l'$ are some integers, so there is a presentation $\dbtilde x = \sum\limits_{i=1}^N \dbtilde x_i$ such that $\dbtilde x_i\in F_1^{(k_i)}A\cap F_2^{(l_i)}A$ with $k_1=k,\ k_{i+1}<k_i$. Take such a presentation of $\dbtilde x$ with the string $(k_1,k_2,k_3,\ldots,k_N)$ being lexicographically minimal among all such presentations. Then we have $\dbtilde x_i\not\in F_1^{(k_i-1)}A\cap F_2^{(l_i)}A+ F_1^{(k_i)}A\cap F_2^{(l_i-1)}A$ and $l_{i+1}>l_i$ for all $i$. Moreover, we can assume that $\dbtilde x$ is a lifting of  with the lexicographically minimal possible $(k_1,k_2,k_3,\ldots,k_N)$ among all liftings of $x$ to $U$. It is sufficient to show that $N=1$: indeed, then $\dbtilde x \in F_1^{(k)}A \cap F_2^{(l)}A$ and $\gr_{21} \dbtilde x = x \in \gr_{12} A$.

Suppose that $N>1$ and consider $y = \gr_{12} \dbtilde x$. It has degree $(k_N, l_N)$. Let $\dbtilde y\in U$ be a lifting of $y$ as an element of  $\gr_{21} A$, i.e. $\dbtilde y\in F_1^{(k_N)}A \cap F_2^{(l_N)}A+\sum\limits_{k'<k_N} F_1^{(k^{\prime})}A \cap F_2^{(l^{\prime})}A$. Then, in the same way as before, we have $\dbtilde y= \sum\limits_{i=N}^M \dbtilde y_i$ such that $\dbtilde y_i\in F_1^{(k_i)}A\cap F_2^{(l_i)}A$ with $k_{i+1}<k_i$ and $\dbtilde y_i\not\in F_1^{(k_i-1)}A\cap F_2^{(l_i)}A+ F_1^{(k_i)}A\cap F_2^{(l_i-1)}A$. 

So $\dbtilde x - \dbtilde y=\sum\limits_{i=1}^{N-1}\dbtilde x_i- \sum\limits_{j=N+1}^{M}\dbtilde y_i$ is a lifting of $x$ such that to $U$ such that the corresponding sequence of degrees of the summands $x_1,\ldots,x_{N-1,-y_{N+1},\ldots,-y_{M}}$ is $(k_1,\ldots,k_{N-1},k_{N+1},\ldots,k_M)$, hence lexicographically smaller than $(k_1,\ldots,k_N)$. This is a contradiction.

\end{proof}

Suppose additionally that $\gr_1 A $ is a commutative algebra. Then $\bigrr A$ is also commutative and has a structure of a Poisson algebra. Let $u$ and $v$ be homogeneous elements of the degrees $(i_1, j_1)$ and $(i_2, j_2)$ respectively. Let $\dbtilde u,\ \dbtilde v$ be their liftings to $F_1^{(i_1)}A \cap F_2^{(j_1)}A$ and $F_1^{(i_2)}A \cap F_2^{(j_2)}A$, respectively. Then we define 
\begin{eqnarray*}\{u, v \}: = [\dbtilde u,\dbtilde v] \mod (F_1^{(i_1+i_2-2)}A \cap F_2^{(j_1+j_2)}A + F_1^{(i_1+i_2-1)}A \cap F_2^{(j_1+j_2-1)}A),
\end{eqnarray*}
$$\deg \{u, v \}=(i_1+i_2, j_1+j_2).$$
Also $\gr_1 A$ is a Poisson algebra, and this give a Poisson algebra structure on $\bigrr A$. It follows from definitions that these brackets are the same Poisson brackets as on the associated bigraded algebra.

Also on $\gr_{12} A$ one can obtain a Poisson bracket from the commutator on $\gr_2 A$ which is also the same bracket as on the associated bigraded algebra. So we have the following

\begin{lem}
\label{2brackets}
If $gr_1 A$ is commutative then the associated bigraded algebra $\bigrr A$ is canonically isomorphic to $\gr_{12} A$ and $\gr_{21} A$ as Poisson algebra.
\end{lem}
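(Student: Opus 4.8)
The plan is to reduce everything to the canonical algebra isomorphisms already established in Lemma~\ref{le:bigr}, and then to check that each of the three Poisson brackets is computed by one and the same recipe: lift to $A$, take the commutator, and read off its leading term in the appropriate bigraded subquotient. By Lemma~\ref{le:bigr} the three underlying algebras $\bigrr A$, $\gr_{12} A$ and $\gr_{21} A$ are canonically isomorphic as graded algebras, so the entire content of the statement is that these isomorphisms intertwine the respective brackets.

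First I would fix notation for the three brackets, all of which were introduced in the discussion preceding the lemma. On $\bigrr A$ the bracket is the one given by the displayed formula: for homogeneous $u,v$ of bidegrees $(i_1,j_1)$, $(i_2,j_2)$, one chooses lifts $\dbtilde u \in F_1^{(i_1)}A \cap F_2^{(j_1)}A$ and $\dbtilde v \in F_1^{(i_2)}A \cap F_2^{(j_2)}A$; since $\gr_1 A$ is commutative the commutator satisfies $[\dbtilde u, \dbtilde v] \in F_1^{(i_1+i_2-1)}A \cap F_2^{(j_1+j_2)}A$, and $\{u,v\}$ is its class in the indicated subquotient. On $\gr_{21} A = \gr_2\gr_1 A$ the bracket is obtained in two stages: commutativity of $\gr_1 A$ gives $\gr_1 A$ the Poisson bracket $[\dbtilde u, \dbtilde v]\bmod F_1^{(i_1+i_2-2)}A$, which is then passed to the $F_2$-associated graded $\gr_2(\gr_1 A)$. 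On $\gr_{12} A = \gr_1\gr_2 A$ the bracket comes from the commutator on the filtered algebra $\gr_2 A$, whose $F_1$-associated graded $\gr_1\gr_2 A$ is commutative.

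The heart of the argument is that a single element of $A$, namely $[\dbtilde u, \dbtilde v]$, simultaneously represents all three brackets. Concretely, I would take one common pair of lifts $\dbtilde u, \dbtilde v$ lying in the intersections $F_1^{(\cdot)}A \cap F_2^{(\cdot)}A$ of the correct bidegrees, form $[\dbtilde u, \dbtilde v]\in A$ once, and then trace its image under the two composite projections $A \to \gr_1 A \to \gr_2\gr_1 A$ and $A \to \gr_2 A \to \gr_1\gr_2 A$, comparing both with its class in $\bigrr A$. The canonical isomorphisms of Lemma~\ref{le:bigr} are built precisely from the natural identifications of $F_1^{(i)}A \cap F_2^{(j)}A$ with its subquotients, so they intertwine these projection maps; hence the three leading classes coincide. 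This is a finite diagram-chase through the isomorphisms $\faktor{W_i}{W_{i-1}} \cong \bigrr A$ (and its $\gr_{12}$ analogue) exhibited in the proof of Lemma~\ref{le:bigr}. Since the three operations thereby coincide and the bracket on $\bigrr A$ is a Poisson bracket, the canonical algebra isomorphisms are automatically Poisson-algebra isomorphisms.

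The step I expect to require the most care is the compatibility of the filtration estimates across the two orders of passing to the associated graded: one must verify that the commutator $[\dbtilde u, \dbtilde v]$, which drops $F_1$-degree by exactly one because $\gr_1 A$ is commutative while in general only preserving $F_2$-degree because $\gr_2 A = U(\fg[t])$ is noncommutative, lands in the same bigraded subquotient whether one first reduces modulo $F_1$ or modulo $F_2$. The potential obstruction is that the two filtrations interact, so that the leading term of the commutator read in the $F_2\gr_1$ order and in the $F_1\gr_2$ order could a priori differ; the point is that Lemma~\ref{le:bigr} already identifies both with the image of the same class in $\bigrr A$, which removes the obstruction. Once this compatibility is checked, the claimed Poisson isomorphism is immediate.
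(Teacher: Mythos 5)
Your proposal is correct and follows essentially the same route as the paper: the paper defines the bracket on $\bigrr A$ by lifting to $F_1^{(i_1)}A\cap F_2^{(j_1)}A$ and taking commutators in $A$, and then simply observes ("it follows from definitions") that the brackets induced on $\gr_{21}A$ from the Poisson structure on $\gr_1 A$ and on $\gr_{12}A$ from the commutator on $\gr_2 A$ coincide with this one under the canonical isomorphisms of Lemma~\ref{le:bigr}. Your write-up just makes explicit the diagram chase that the paper leaves implicit, including the key filtration estimate $[\dbtilde u,\dbtilde v]\in F_1^{(i_1+i_2-1)}A\cap F_2^{(j_1+j_2)}A$ coming from commutativity of $\gr_1 A$.
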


Suppose that $\dim F_1^{(i)}A/F_1^{(i-1)}A$ is always finite and let $P_1(q):=\sum\limits_{i=0}^\infty q^i\dim F_1^{(i)}A/F_1^{(i-1)}A$ be the Poincar\'e series of $\gr_1 A$. Let $$P_{12}(q,z):=\sum\limits_{i,j=0}^\infty q^iz^j\dim \faktor{(F_1^{(i)}A \cap F_2^{(j)}A)}{(F_1^{(i-1)}A \cap F_2^{(j)}A + F_1^{(i)}A \cap F_2^{(j-1)}A)}$$
be the Poincar\'e series of $\bigrr A$. Then we have

\begin{lem}\label{le:Poincare}
$P_1(q)=P_{12}(q,1)$.
\end{lem}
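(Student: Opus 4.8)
The plan is to compare the two series coefficient by coefficient in $q$, reducing the statement to the elementary fact that passing to an associated graded space with respect to a filtration preserves dimensions. Fix $i$. The coefficient of $q^i$ in $P_1(q)$ is $\dim F_1^{(i)}A/F_1^{(i-1)}A$, the dimension of the $i$-th homogeneous component $V_i := F_1^{(i)}A/F_1^{(i-1)}A$ of $\gr_1 A$. Setting $z=1$, the coefficient of $q^i$ in $P_{12}(q,1)$ is $\sum_j \dim \faktor{(F_1^{(i)}A\cap F_2^{(j)}A)}{(F_1^{(i-1)}A\cap F_2^{(j)}A + F_1^{(i)}A\cap F_2^{(j-1)}A)}$. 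So it suffices to prove that these two numbers agree for every $i$, after which summing over $i$ gives the claim.

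First I would invoke the construction already carried out in the proof of Lemma~\ref{le:bigr}: the filtration $F_2$ induces on $\gr_1 A$ the exhaustive increasing filtration $W_0\subset W_1\subset\cdots$ with $W_j = \bigoplus_i \faktor{(F_2^{(j)}A\cap F_1^{(i)}A)}{(F_2^{(j)}A\cap F_1^{(i-1)}A)}$. Restricting to the fixed $\gr_1$-degree $i$, this yields an increasing filtration $(W_j)_i$ of the space $V_i$. The computation of $W_j/W_{j-1}$ performed in that proof identifies its $j$-th graded piece with $\faktor{(F_1^{(i)}A\cap F_2^{(j)}A)}{(F_1^{(i-1)}A\cap F_2^{(j)}A + F_1^{(i)}A\cap F_2^{(j-1)}A)}$, that is, with exactly the bigraded summand appearing in $P_{12}$. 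Thus the inner sum over $j$ is precisely the sum of dimensions of the successive quotients of the filtration $(W_j)_i$ on $V_i$.

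With this identification in hand the conclusion is immediate: for a finite-dimensional vector space carrying an exhaustive increasing filtration, the sum of the dimensions of the successive quotients equals the dimension of the whole space. Applying this to $V_i$ gives $\dim V_i = \sum_j \dim (W_j)_i/(W_{j-1})_i$, which is exactly the equality of $q^i$-coefficients sought, and summing over $i$ produces $P_1(q)=P_{12}(q,1)$.

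The one point that genuinely requires the standing hypothesis, and the only place where care is needed, is the finiteness of the inner sum over $j$: a priori the $q^i$-coefficient of $P_{12}(q,1)$ could be an infinite sum. This is controlled by the assumption that $\dim F_1^{(i)}A/F_1^{(i-1)}A$ is finite, which makes $V_i$ finite-dimensional; since the $(W_j)_i$ are increasing and exhaust $V_i$, the filtration stabilizes and only finitely many graded pieces are nonzero, so the sum is finite and the dimension count is valid. Beyond verifying this, the argument is purely a dimension count and presents no further obstacle.
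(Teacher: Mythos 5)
Your proof is correct, and it is essentially the paper's (implicit) argument: the paper states Lemma~\ref{le:Poincare} without proof, treating it as immediate from the construction in the proof of Lemma~\ref{le:bigr}, and your verification --- restricting the induced filtration $W_j$ to each homogeneous component $F_1^{(i)}A/F_1^{(i-1)}A$ of $\gr_1 A$, identifying its successive quotients with the bigraded pieces, and summing dimensions --- is exactly that omitted check. The two points you flag (finite-dimensionality of the $F_1$-graded pieces, and exhaustiveness of $F_2$ so that the induced filtration stabilizes on each finite-dimensional component) are indeed the only hypotheses needed, and both hold under the paper's standing assumptions.
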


\begin{prop}
We have a bigraded Poisson algebra isomorphism $\gr_{12} Y(\fg) \simeq \gr_{21} Y(\fg) \simeq S(\fg[t])$ where the bigrading on $S(\fg[t])$ is given by $\deg_1 x[r-1]=r,\ \deg_2 x[r-1]=r-1$ and the Poisson bracket on $S(\fg[t])$ is given by $\{ x[r],y[s]\}=[x,y][r+s]$. 
\end{prop}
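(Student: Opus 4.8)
The plan is to identify the associated bigraded algebra $\bigrr Y(\fg)$ with $S(\fg[t])$. Since $\gr_1 Y(\fg)$ is commutative, Lemmas~\ref{le:bigr} and~\ref{2brackets} give canonical isomorphisms of bigraded Poisson algebras $\gr_{12}Y(\fg)\cong\bigrr Y(\fg)\cong\gr_{21}Y(\fg)$, so it is enough to treat one of them, say $\gr_{12}Y(\fg)=\gr_1 U(\fg[t])$, where $U(\fg[t])=\gr_2 Y(\fg)$ by Theorem~\ref{th:F2} and $F_1$ is the induced filtration. Both the commutative-algebra isomorphism and the Poisson bracket will then be read off from the two filtrations.

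First I would determine the $F_1$-degree of the generators $x[r-1]=xt^{r-1}\in\fg t^{r-1}\subset U(\fg[t])$. By Theorem~\ref{th:F2} each such element is the $\gr_2$-symbol of a linear combination of the $t_{ij}^{(r)}$, which have $F_1$-degree $r$, so $\deg_1 x[r-1]\le r$. For the reverse inequality I would argue inside $Y(\fg)$: any $y\in F_1^{(r-1)}Y(\fg)$ is a combination of a scalar and monomials $t_{a_1b_1}^{(s_1)}\cdots t_{a_mb_m}^{(s_m)}$ with $m\ge1$ and $\sum_l s_l\le r-1$, and such a monomial has $F_2$-degree $\sum_l(s_l-1)\le r-2$; hence the only contribution of $y$ to $\gr_2^{(r-1)}Y(\fg)$ could come from the scalar, which lands in $\gr_2$-degree $0$. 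Since $0\ne x[r-1]\in\gr_2^{(r-1)}Y(\fg)$, this shows $x[r-1]$ is not the $\gr_2$-symbol of any element of $F_1^{(r-1)}Y(\fg)$, so $\deg_1 x[r-1]=r$ and the symbol of $x[r-1]$ is a nonzero element of $\bigrr Y(\fg)$ in bidegree $(r,r-1)$.

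Next I would build the isomorphism. As $\gr_{12}Y(\fg)$ is commutative, sending each $x[r-1]$ to the nonzero symbol above extends to a homomorphism of bigraded commutative algebras $\phi\colon S(\fg[t])\to\gr_{12}Y(\fg)$, and $\phi$ is surjective because the $xt^k$ generate $U(\fg[t])$, hence their $F_1$-symbols generate $\gr_1 U(\fg[t])$. Surjectivity gives $\dim\bigl(\gr_{12}Y(\fg)\bigr)_{(i,j)}\le\dim S(\fg[t])_{(i,j)}$ in every bidegree. To promote this to injectivity I would compare Poincar\'e series: each generator $x[r-1]$ has bidegree $(r,r-1)$, so $S(\fg[t])$ has bigraded Poincar\'e series $\prod_{r\ge1}(1-q^r z^{r-1})^{-\dim\fg}$, whose specialization at $z=1$ is $\prod_{r\ge1}(1-q^r)^{-\dim\fg}$; by Lemma~\ref{le:Poincare} and Corollary~\ref{th:F1} this equals $P_1(q)=P_{12}(q,1)$. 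Because a fixed $F_1$-degree $i$ meets only finitely many, finite-dimensional, $\deg_2$-components, the termwise inequality together with equality of the coefficients of each $q^i$ forces equality in every bidegree, so $\phi$ is an isomorphism of bigraded algebras.

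Finally I would match the Poisson structures. By Lemma~\ref{2brackets} the bracket on $\gr_{12}Y(\fg)=\gr_1 U(\fg[t])$ is the one induced from the commutator of $U(\fg[t])=\gr_2 Y(\fg)$; since $[x[k],y[l]]=[x,y][k+l]$ has $\deg_1=k+l+1=\deg_1 x[k]+\deg_1 y[l]-1$, this simultaneously reconfirms that $\gr_1 U(\fg[t])$ is commutative and shows that the induced bracket is $\{x[k],y[l]\}=[x,y][k+l]$, which is exactly the claimed bracket transported through $\phi$; the canonical Poisson isomorphism with $\gr_{21}Y(\fg)$ then gives the statement for $\gr_{21}$ as well. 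I expect the one genuinely non-formal point to be the computation of $\deg_1 x[r-1]$ in the second paragraph: the upper bound is immediate, but the lower bound — equivalently, the non-vanishing of the generators in the prescribed bidegree $(r,r-1)$ — is exactly where the interaction of the two Yangian filtrations is used, and without it the resulting bigrading would not coincide with the one on $S(\fg[t])$.
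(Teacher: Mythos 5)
Your skeleton matches the paper's (reduce to the bigraded algebra via Lemmas~\ref{le:bigr} and~\ref{2brackets}, use Theorem~\ref{th:F2} for the bound $x[r-1]\in F_1^{(r)}U(\fg[t])$, and close with the identity $P_{12}(q,1)=P_1(q)=\prod_{r\ge1}(1-q^r)^{-\dim\fg}$ from Lemma~\ref{le:Poincare} and Corollary~\ref{th:F1}), and your direct proof that $x[r-1]\notin F_1^{(r-1)}U(\fg[t])$ is correct and even a bit more elementary than the paper's. But there is a genuine gap at the surjectivity step: the inference ``the $xt^k$ generate $U(\fg[t])$, hence their $F_1$-symbols generate $\gr_1 U(\fg[t])$'' is false for a general filtered algebra. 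Symbols of algebra generators generate the associated graded only when the filtration is the one spanned by monomials in those generators with their assigned degrees; for an arbitrary filtration this fails. For instance, take $A=\bc[x]$ with $F^{(k)}=\{\deg\le 2k\}$: then $x$ generates $A$ and has exact degree $1$, but $\bar x^{\,k}$ is the class of $x^k$ in $F^{(k)}/F^{(k-1)}$, which vanishes for $k\ge 2$ since $x^k\in F^{(k-1)}$, so the symbol of $x$ generates only $\bc\oplus\bc\bar x$ while every graded piece of $\gr A$ is two-dimensional. In your situation $F_1$ on $U(\fg[t])$ is precisely such an ``abstract'' filtration: it is induced from $Y(\fg)$, i.e.\ $F_1^{(i)}U(\fg[t])_j$ is the image of $F_1^{(i)}Y(\fg)\cap F_2^{(j)}Y(\fg)$ in $F_2^{(j)}/F_2^{(j-1)}$, and an element of this intersection is a combination of monomials in the $t_{ab}^{(s)}$ whose individual terms need not lie in $F_2^{(j)}$. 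So there is no a priori reason why $F_1^{(i)}U(\fg[t])$ is spanned by PBW monomials $\prod_l x_{a_l}[r_l-1]$ with $\sum_l r_l\le i$ --- that statement is essentially the content of the proposition. Your degree computation for single generators does not patch this: it only shows that the bigraded components $(i,j)$ vanish for $j\ge i$ and that each generator survives in bidegree $(r,r-1)$; it says nothing about products (e.g.\ it does not exclude $x[0]y[1]\in F_1^{(2)}U(\fg[t])$), which is exactly what surjectivity requires.

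The repair is to run your Poincar\'e-series comparison in the opposite direction, which is what the paper does. From the upper bound alone, the PBW monomials with $\sum_l r_l\le i$ and loop degree $j$ are linearly independent elements of $F_1^{(i)}U(\fg[t])_j$, so $\dim F_1^{(i)}U(\fg[t])_j$ is at least the number of such monomials; i.e.\ the partial sums of $P_{12}(q,z)$ dominate those of $\prod_{r\ge1}(1-q^rz^{r-1})^{-\dim\fg}$. The specialization $P_{12}(q,1)=\prod_{r\ge1}(1-q^r)^{-\dim\fg}$ then forces these inequalities to be equalities, i.e.\ the PBW monomials with $\sum_l r_l\le i$ \emph{span} each $F_1^{(i)}U(\fg[t])_j$. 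This yields at one stroke the surjectivity and the injectivity of your $\phi$ (and also re-proves your lower bound, which is how the paper obtains it). In short: surjectivity must come out of the series identity as a conclusion, not go into it as a hypothesis; with that reversal your argument, including the bracket computation at the end (which the paper leaves implicit), is complete.
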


\begin{proof}
The first isomorphism $\gr_{12} Y(\fg) \simeq \gr_{21} Y(\fg)$ is a particular case of Lemma~\ref{le:bigr}. 
From Theorem~\ref{th:F2} we have $x[r-1] \in F_1^{(r)}U(\fg[t])$. Hence the Poincar\'e series $P_{12}(q,z)$ is greater or equal to $\prod\limits_{r=1}^{\infty}(1-q^rz^{r-1})^{-\dim\fg}$ (in the sense that every coefficient of the former is greater or equal to the corresponding coefficient of the latter), and it is equal if and only if $x[r-1] \not\in F_1^{(r-1)}U(\fg[t])$ for all $x\ne0$. So according to Lemma~\ref{le:Poincare} $P_{1}(q)=\prod\limits_{r=1}^{\infty}(1-q^r)^{-\dim\fg}$ if and only if $x[r-1] \not\in F_1^{(r-1)}U(\fg[t])$ for all $x\ne0$. On the other hand, we have $P_{1}(q)=\prod\limits_{r=1}^{\infty}(1-q^r)^{-\dim\fg}$ by Corollary~\ref{th:F1}. This completes the proof.
\end{proof}

\subsection{Congruence subgroup $G_1[[t^{-1}]]$ and its coordinate ring.}\label{ss:congruence}
Let us give a few more details on $\gr_1 Y(\fg)=\mathcal{O}(G_1[[t^{-1}]])$. By definition, the proalgebraic group $G[[t^{-1}]]$ consist of $\bc[[t^{-1}]]$ points of $G$. For any $g\in G[[t^{-1}]]$ we denote by $ev_g$  the corresponding homomorphism $\mathcal{O}(G) \to \bc[[t^{-1}]]$. The first congruence subgroup $G_1[[t^{-1}]]\subset G[[t^{-1}]]$ is the kernel of the evaluation homomorphism at the infinity $G[[t^{-1}]] \to G$.

To any function $f \in \mathcal{O}(G)$ one can assign the $\bc[[t^{-1}]]$-valued function $\wt{f}: G_1[[t^{-1}]] \to \bc[[t^{-1}]], \wt{f} = \sum_{r=0}^{\infty} f^{(r)} t^{-r}$ as follows: for any $g\in G_1[[t^{-1}]]$ we have  
$$\wt{f}(g) = ev_g(f).$$
The Fourier coefficients $f^{(r)}$ for all $f\in \mathcal{O}(G)$ generate the coordinate ring $\mathcal{O}(G_1[[t^{-1}]])$. Note that the group $G_1[[t^{-1}]]$ depends only on the formal group scheme assigned to $G$, so one can produce $f^{(r)}$ from any $f$ in the completion of $\mathcal{O}(G)$ with respect to the maximal ideal of $E\in G$.

There is a natural Poisson bracket on $\mathcal{O}(G_1[[t^{-1}]])$ coming from the Lie bialgebra structure on the loop algebra $\fg((t^{-1}))$ (or, equivalently, from the rational $r$-matrix). To write this bracket explicitly, we set, for any $x\in\fg$, the corresponding momenta vector fields of the left and right action on $G$, $\xi^L_x$ and $\xi^R_x$, respectively. Then for $f_1,f_2\in\mathcal{O}(G)$, the bracket of corresponding $\bc[[t^{-1}]]$-valued functions reads
\begin{equation}
\{ \wt{f}_1(u),\wt{f}_2(v)\}=\frac{1}{u-v}( \wt{\xi^L_{x_a}f_1}(u)\wt{\xi^L_{x_a}f_2}(v)-\wt{\xi^R_{x_a}f_1}(u)\wt{\xi^R_{x_a}f_2}(v)).
\end{equation} 

The $\bc^*$ action on $G_1[[t^{-1}]]$ by dilations of the variable $t$ determines a grading on $\mathcal{O}(G_1[[t^{-1}]])$ such that $\deg f^{(r)}=r$ for any $f\in\mathcal{O}(G)$. The Poisson bracket has degree $-1$ with respect to this grading. A more precise statement of Theorem~\ref{th:F1} is the following

\begin{prop} \cite[Proposition 2.24]{ir2} 
\label{locprop}
There is an isomorphism of graded Poisson algebras $\gr_1 Y_V(\fg) \simeq \mathcal{O}(G_1[[t^{-1}]])$ such that $\gr_1 t_{ij}^{(r)} = \Delta_{ij}^{(r)}$ where $\Delta_{ij} \in \mathcal{O}(G)$ are the matrix elements of the representation $V$.
\end{prop}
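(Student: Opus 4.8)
The plan is to construct the isomorphism explicitly, by realizing the matrix of symbols $\gr_1 T(u)$ as the tautological $G_1[[u^{-1}]]$-valued point in the representation $V$, and then to certify it with a Poincar\'e series count (for which I may invoke Theorem~\ref{th:F1}).

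First I would note that $F_1$ is a Hopf algebra filtration: the coproduct $\Delta\big(t_{ij}(u)\big)=\sum_k t_{ik}(u)\otimes t_{kj}(u)$, the counit $\varepsilon\big(t_{ij}(u)\big)=\delta_{ij}$ and the antipode $S(T(u))=T(u)^{-1}$ all respect $\deg t_{ij}^{(r)}=r$. Hence $\gr_1 Y_V(\fg)$ is a commutative graded Hopf algebra, that is, the coordinate ring of a graded proalgebraic group $\mathcal{G}$, and the symbol matrix $\overline T(u):=[\gr_1 t_{ij}(u)]$ is grouplike. It therefore defines a $\mathcal{G}$-point of $GL(V)$ congruent to the identity at $u=\infty$ (because $\varepsilon(\gr_1 t_{ij}^{(r)})=0$ for $r\ge1$), whose entries generate $\mathcal{O}(\mathcal{G})=\gr_1 Y_V(\fg)$.

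Next I would cut this $GL(V)$-valued point down to a $G$-valued one. In the $F_1$-graded limit the second defining relation $S^2(T(u))=T(u+\tfrac12 c_\fg)$ degenerates to the classical condition whose solutions inside $GL(V)$ are exactly the image $\rho_V(G)$, i.e. the relation forcing $\overline T(u)$ to preserve the invariant structure on $V=\bigoplus_i V(\omega_i,0)$ that defines $G\subset GL(V)$. Since $V$ contains every fundamental representation, $\rho_V$ is faithful, so $\overline T(u)$ is a point of the first congruence subgroup $G_1[[u^{-1}]]\subset G[[u^{-1}]]$ realized in $GL(V)$. Pulling back matrix elements then produces a surjective graded algebra homomorphism
$$\phi:\mathcal{O}(G_1[[t^{-1}]])\longrightarrow \gr_1 Y_V(\fg),\qquad \Delta_{ij}^{(r)}\longmapsto \gr_1 t_{ij}^{(r)},$$
where I use that the Fourier coefficients $\Delta_{ij}^{(r)}$ generate $\mathcal{O}(G_1[[t^{-1}]])$, the congruence subgroup depending only on the formal completion of $G$ at the identity (Section~\ref{ss:congruence}).

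That $\phi$ is Poisson I would check by expanding the $RTT$ relation to subleading order in $F_1$: with $R(u-v)=1+\tfrac{\Omega}{u-v}+O\big((u-v)^{-2}\big)$ and $\Omega=\sum_a x_a\otimes x_a$, the bracket obtained from $R(u-v)T_1(u)T_2(v)=T_2(v)T_1(u)R(u-v)$ matches the explicit formula of Section~\ref{ss:congruence} built from the left- and right-invariant vector fields. To promote the surjection $\phi$ to an isomorphism I would compare Poincar\'e series: in the exponential coordinates $G_1[[t^{-1}]]\cong\exp(t^{-1}\fg[[t^{-1}]])$ the ring $\mathcal{O}(G_1[[t^{-1}]])$ is a graded polynomial algebra on $(t^{-1}\fg[[t^{-1}]])^*$, with series $\prod_{r\ge1}(1-q^r)^{-\dim\fg}$, equal to that of $\gr_1 Y_V(\fg)$ from Theorem~\ref{th:F1}; a degreewise surjection of graded algebras with equal finite Poincar\'e series is an isomorphism. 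I expect the main obstacle to be the preceding step: extracting from the quantum relation $S^2(T(u))=T(u+\tfrac12 c_\fg)$ the precise classical equations cutting $\rho_V(G)$ out of $GL(V)$, hence proving that the grouplike symbol $\overline T(u)$ is genuinely $G$-valued and that $\mathcal{G}$ is exactly $G_1[[u^{-1}]]$ and not larger; the Poincar\'e series comparison is what guarantees that no relation is lost on passing to $\gr_1$.
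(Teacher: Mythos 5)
First, a structural remark: the paper does not actually prove Proposition~\ref{locprop} --- it is imported verbatim from \cite[Proposition 2.24]{ir2}, so your argument has to stand on its own, or reduce cleanly to the structural results of Wendlandt \cite{wend}, which is what the proof in \cite{ir2} does. Your framing is good as far as it goes: $F_1$ is indeed a Hopf filtration, $\gr_1 Y_V(\fg)$ is a commutative graded Hopf algebra, the symbol matrix is grouplike, so $\Spec \gr_1 Y_V(\fg)$ is a closed subgroup scheme $\mathcal{G}\subseteq GL(V)_1[[u^{-1}]]$; and your closing step (a graded surjection between algebras with equal, termwise finite Poincar\'e series is an isomorphism) is sound. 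The genuine gap is exactly the step you flag as ``the main obstacle'', and it is not a technicality but the whole content of the proposition. The relation $S^2(T(u))=T(u+\tfrac12 c_\fg)$ does \emph{not} degenerate, at the level of leading symbols, to equations cutting the image of $G$ out of $GL(V)$: the induced antipode on $\gr_1 Y_V(\fg)$ is the antipode of a commutative Hopf algebra (pullback of inversion), hence $\gr_1(S)^2=\Id$, and the shift $u\mapsto u+\tfrac12 c_\fg$ changes each coefficient $t^{(r)}_{ij}$ only by terms $t^{(s)}_{ij}$ with $s<r$, which are invisible at leading order. Consequently every coefficient of $S^2(T(u))-T(u+\tfrac12 c_\fg)$ lies in $F_1^{(r-1)}Y_V(\fg)$: its naive top symbol is identically zero, i.e.\ these relations drop filtration degree, and the classical equations they impose are hidden in subleading terms which your plan never computes. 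Extracting them is precisely what the analysis of Wendlandt's central series $z_V(u)$ in the extended ($RTT$-only) Yangian $X_V(\fg)$ accomplishes --- with $Y_V(\fg)=X_V(\fg)/(z_V(u)-1)$ and $X_V(\fg)\simeq Y(\fg)\otimes ZX_V(\fg)$ --- and it is the technical heart of \cite[Proposition 2.24]{ir2}. Without it you cannot even define your map $\phi$: well-definedness of $\Delta_{ij}^{(r)}\mapsto \gr_1 t_{ij}^{(r)}$ requires the symbols to satisfy \emph{every} relation in the ideal of $G_1[[t^{-1}]]\subset GL(V)_1[[t^{-1}]]$, which is exactly the containment $\mathcal{G}\subseteq G_1[[t^{-1}]]$ at stake.

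There is also a circularity in your certification step. The Poincar\'e series $\prod_{r\ge1}(1-q^r)^{-\dim\fg}$ for $\gr_1 Y_V(\fg)$ is the Corollary following Theorem~\ref{th:F1}, and the paper deduces it from that theorem --- which is the \emph{same} citation of \cite[Proposition 2.24]{ir2} as the statement you are proving (Proposition~\ref{locprop} is introduced as ``a more precise statement of Theorem~\ref{th:F1}''). Invoking it therefore assumes what is to be shown. To break the circle you would need an independent computation of this series, e.g.\ from Wendlandt's Poincar\'e--Birkhoff--Witt-type theorems for the $RTT$ presentation; but once you appeal to that machinery you are effectively reproducing the proof of \cite{ir2}, and the degree-drop analysis described above is still the piece that has to be supplied.
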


Any formal diffeomorphism $\varphi: (\fg, 0) \to (G,E)$ (i.e. any regular map $\varphi$ from the formal neighborhood of $0\in\fg$ to the formal neighborhood of $E\in G$ such that $d_0\varphi=\Id_\fg$) determines an isomorphism $\Phi: t^{-1}\fg[[t^{-1}]]\to G_1[[t^{-1}]]$ which preserves the grading defined by the $\bc^*$ action by dilations on both sides. The coordinate ring of $t^{-1}\fg[[t^{-1}]]$ is the symmetric algebra of its graded dual space i.e. $\fg[t]$ with the pairing given by $$( x(t),y(t)):=Res_{t=0}\langle x(t),y(t) \rangle dt\quad\forall\ x(t)\in\fg[t],\ y(t)\in t^{-1}\fg[[t^{-1}]].$$ 
This means that any formal diffeomorphism $\varphi:(\fg,0)\to (G,E)$ identifies $\mathcal{O}(G_1[[t^{-1}]])$ with $S(\fg[t])$.  

The filtration $F_2$ on $Y(\fg)$ induces a filtration on $\gr_1 Y(\fg)=\mathcal{O}(G_1[[t^{-1}]])$. Slightly abusing notations, we denote this filtration by $F_2$ as well.
Let $\mathcal{O}(G)_+$ be polynomial functions on $G$ consisting of $f \in \mathcal{O}(G)$ such that $f(E) = 0$.

\begin{prop}\label{pr:F2}
For any $f\in \mathcal{O}(G)$, we have $f^{(r)}\in F_2^{(r-1)}\mathcal{O}(G_1[[t^{-1}]])$. Moreover, if $f=f_1 \cdot f_2\cdot\ldots\cdot f_k$ such that $f_1, \ldots, f_k \in \mathcal{O}(G)_+$ then $f^{(r)}\in F_2^{(r-k)}\mathcal{O}(G_1[[t^{-1}]])$.  
\end{prop}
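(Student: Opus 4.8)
The plan is to reduce the general statement to the case of the matrix elements $\Delta_{ij}$, for which the $F_2$-degree can be read off directly from the Yangian generators, and then to propagate the estimate through products using the multiplicativity of $f\mapsto\wt f$. The seed estimate is $\Delta_{ij}^{(r)}\in F_2^{(r-1)}\mathcal{O}(G_1[[t^{-1}]])$ for $r\geq 1$. By Proposition~\ref{locprop} we have $\Delta_{ij}^{(r)}=\gr_1 t_{ij}^{(r)}$, which lives in the $F_1$-degree-$r$ component $F_1^{(r)}Y(\fg)/F_1^{(r-1)}Y(\fg)$ of $\gr_1 Y(\fg)$. By the definition of the induced filtration $F_2$ on $\gr_1 Y(\fg)$ (as in the proof of Lemma~\ref{le:bigr}), a class in this component lies in $F_2^{(i)}$ precisely when it admits a representative in $F_2^{(i)}Y(\fg)$; since $t_{ij}^{(r)}$ has $F_2$-degree $r-1$ by definition of $F_2$, the estimate follows. (For $r=0$ the coefficient $\Delta_{ij}^{(0)}=\delta_{ij}$ is a scalar, so the first assertion is to be read for $r\geq 1$.) I would also record the elementary multiplicativity: for fixed $g\in G_1[[t^{-1}]]$ the evaluation $ev_g\colon\mathcal{O}(G)\to\bc[[t^{-1}]]$ is an algebra homomorphism, whence $\wt{f_1f_2}=\wt{f_1}\,\wt{f_2}$ and $(f_1f_2)^{(r)}=\sum_{a+b=r}f_1^{(a)}f_2^{(b)}$ in $\mathcal{O}(G_1[[t^{-1}]])$.

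To prove the first assertion for an arbitrary $f$, I would pass to the completion $\hat{\mathcal{O}}(G)_E$ at the identity, which is legitimate because $f^{(r)}$ depends only on the image of $f$ there. The $\fg$-module $V=\bigoplus_i V(\omega_i,0)$ is faithful (a nonzero ideal of a simple $\fg$ is all of $\fg$), so the differentials at $E$ of the functions $\bar\Delta_{ij}:=\Delta_{ij}-\delta_{ij}\in\mathcal{O}(G)_+$ span $\fg^*=\mathfrak{m}_E/\mathfrak{m}_E^2$; hence the $\bar\Delta_{ij}$ generate the maximal ideal $\mathfrak{m}_E$ of the complete local ring and any $f$ is a power series in them. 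Since $\wt{\bar\Delta}_{ij}=\sum_{r\geq 1}\Delta_{ij}^{(r)}t^{-r}$ starts in degree $t^{-1}$, a monomial of degree $d$ in the $\bar\Delta_{ij}$ contributes to $\wt f$ only in degrees $\leq -d$; as there are finitely many $\Delta_{ij}$, the coefficient $f^{(r)}$ is a finite sum of $t^{-r}$-coefficients of monomials of degree $1\leq d\leq r$. By multiplicativity and the seed estimate, each such coefficient lies in $F_2^{(r-d)}\subseteq F_2^{(r-1)}$, giving $f^{(r)}\in F_2^{(r-1)}$.

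Granting the first assertion, the ``Moreover'' part is pure degree bookkeeping. The induced filtration $F_2$ on the commutative algebra $\gr_1 Y(\fg)$ is multiplicative, so if $f=f_1\cdots f_k$ with each $f_l\in\mathcal{O}(G)_+$, then $f_l^{(0)}=f_l(E)=0$ and every term of $f^{(r)}=\sum_{r_1+\dots+r_k=r}f_1^{(r_1)}\cdots f_k^{(r_k)}$ has all $r_l\geq 1$. Applying the first assertion to each factor gives $f_1^{(r_1)}\cdots f_k^{(r_k)}\in F_2^{(\sum_l(r_l-1))}=F_2^{(r-k)}$, as desired (and consistently $f^{(r)}=0$ for $r<k$).

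The main obstacle is the second paragraph: one must justify the passage to the completion and, above all, the convergence bookkeeping, i.e. that only finitely many monomials contribute to a fixed Fourier coefficient and that the multiplicative $F_2$-estimate survives the infinite power-series expansion of $f$ in the $\bar\Delta_{ij}$. Once the seed estimate and the multiplicativity of $f\mapsto\wt f$ are secured, the rest is degree counting.
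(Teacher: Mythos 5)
Your proof is correct, and its skeleton is the same as the paper's: the seed estimate $\Delta_{ij}^{(r)}=\gr_1 t_{ij}^{(r)}\in F_2^{(r-1)}\mathcal{O}(G_1[[t^{-1}]])$ coming from Proposition~\ref{locprop}, multiplicativity of $f\mapsto\wt f$, and, for the ``Moreover'' part, the identical expansion $f^{(r)}=\sum_{r_1+\dots+r_k=r}f_1^{(r_1)}\cdots f_k^{(r_k)}$ with all $r_l\geq 1$. Where you genuinely differ is in how a general $f$ is reduced to matrix elements. The paper does this in one line: by Peter--Weyl the $\Delta_{ij}$ generate $\mathcal{O}(G)$ as an algebra, so it suffices to check the estimate on generators (closure under products being exactly your multiplicativity computation). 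You instead pass to the completed local ring at $E$, use faithfulness of $V$ to see that the $\bar\Delta_{ij}$ generate its maximal ideal, and expand $f$ as a formal power series, doing the finiteness bookkeeping by hand. Your route is longer but buys two things the paper leaves implicit: first, the matrix elements of $V$ are a priori functions on $\tilde G$ rather than on the adjoint group $G$, so ``generate $\mathcal{O}(G)$'' strictly speaking needs the paper's own remark that $\mathcal{O}(G_1[[t^{-1}]])$ only sees the formal neighborhood of $E$ --- your completion argument is insensitive to the isogeny; second, the fact that only finitely many terms of the power series contribute to a fixed Fourier coefficient, which you verify rather than assume. Your observation that the first assertion should be read for $r\geq 1$ (since $f^{(0)}=f(E)$ is a scalar) is also a fair and necessary reading of the statement.
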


\begin{proof}
It suffices to check the first assertion on generators of $\mathcal{O}(G)$. According to Peter-Weyl theorem, $\Delta_{ij}$ generate $\mathcal{O}(G)$ and we have $\Delta_{ij}^{(r)}=\gr_1 t_{ij}^{(r)}\in F_2^{(r-1)}\mathcal{O}(G_1[[t^{-1}]])$ by Proposition \ref{locprop}. 

To prove the second assertion, notice that $f^{(r)}$ is a linear combination of $f_1^{(r_1)}f_2^{(r_2)}\cdot\ldots\cdot f_k^{(r_k)}$ with $r_i>0$ and $r_1+r_2+\ldots+r_k=r$. 
\end{proof}

\begin{cor}\label{co:linear}
Let $f_a\in\mathcal{O}(G)$ be a collection of functions such that $\{x_a= d_E f_a\}$ is the basis of $\fg=\fg^*=T_E^*G$. Then $f_a^{(r)}\in F_2^{(r-1)}\mathcal{O}(G_1[[t^{-1}]])$, and $\gr_2 \mathcal{O}(G_1[[t^{-1}]])$ is freely generated by $\gr_2 f_a^{(r)}$ with $a = 1,\ldots,n,\ r=1,2,\ldots$. Moreover, $\gr_2 f_a^{(r)}=x_a[r-1]$. 
\end{cor}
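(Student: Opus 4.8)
The plan is to deduce everything from three inputs already available: Proposition~\ref{pr:F2}, which controls the $F_2$-degree of Fourier coefficients; Proposition~\ref{locprop}, which identifies $\gr_1 t_{ij}^{(r)}=\Delta_{ij}^{(r)}$; and the bigraded isomorphism $\gr_2\mathcal{O}(G_1[[t^{-1}]])=\gr_{21}Y(\fg)\simeq S(\fg[t])$ with $\deg_1 x[r-1]=r$, $\deg_2 x[r-1]=r-1$, proved just above. The first assertion, $f_a^{(r)}\in F_2^{(r-1)}\mathcal{O}(G_1[[t^{-1}]])$, is simply the first part of Proposition~\ref{pr:F2} with $f=f_a$, so it requires nothing further.

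Next I would reduce the computation of the symbol $\gr_2 f_a^{(r)}$ to a purely linear problem. For $r\ge1$ the constant $f_a(E)$ does not contribute to $f_a^{(r)}$, and if $f_a-f_a(E)\in\mathfrak{m}_E^2$ (that is, if $x_a=d_E f_a=0$), then $f_a^{(r)}\in F_2^{(r-2)}$ by the second part of Proposition~\ref{pr:F2} with $k=2$, so its symbol vanishes. Consequently $\gr_2 f_a^{(r)}$ depends on $f_a$ only through $x_a=d_E f_a\in\mathfrak{m}_E/\mathfrak{m}_E^2=\fg^*$, and the assignment $\xi=d_E f\mapsto\gr_2 f^{(r)}$ is a well-defined linear map $\Sigma_r\colon\fg^*\to S(\fg[t])_{(r,r-1)}$. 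The claim becomes exactly that $\Sigma_r(\xi)=\xi[r-1]$ under the identification $\fg^*=\fg$.

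The heart of the argument is to identify $\Sigma_r$. First, a bidegree count shows the target has the right size: a monomial $x_{b_1}[s_1-1]\cdots x_{b_m}[s_m-1]$ has $\deg_1=\sum s_i$ and $\deg_2=\sum s_i-m$, so the component $S(\fg[t])_{(r,r-1)}$ forces $m=1$ and equals $\fg\,t^{r-1}$, of dimension $\dim\fg$. To pin the actual value of $\Sigma_r$ I would use the residue description of $\mathcal{O}(G_1[[t^{-1}]])\simeq S(\fg[t])$: for $\gamma=\sum_{s\ge1}g_s t^{-s}\in t^{-1}\fg[[t^{-1}]]$ the linear-in-$g$ part of $f^{(r)}$ is $\langle d_E f,g_r\rangle$, which under the pairing $(x(t),y(t))=\Res_{t=0}\langle x(t),y(t)\rangle\,dt$ represents precisely $(d_E f)[r-1]$. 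This produces the candidate $\Sigma_r=\Id$; the remaining point, and the main obstacle, is to be sure that this linear term really is the top $F_2$-symbol, i.e. that $\Sigma_r$ is injective (equivalently bijective, by the dimension count). I would establish this by feeding in the Yangian generators: by Proposition~\ref{locprop} the class of $t_{ij}^{(r)}$ in $\gr_{21}Y(\fg)$ equals $\gr_2\Delta_{ij}^{(r)}=\Sigma_r(d_E\Delta_{ij})$, while by Theorem~\ref{th:F2} the symbols $\gr_2 t_{ij}^{(r)}$ fill out all of $\fg\,t^{r-1}\subset U(\fg[t])$, whose $\gr_1$ is again $\fg\,t^{r-1}\subset S(\fg[t])$. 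Since the differentials $d_E\Delta_{ij}$ of the matrix elements of the faithful $\fg$-module $V$ span $\fg^*$, it follows that $\Sigma_r$ is surjective, hence an isomorphism, and comparison with the residue computation forces $\Sigma_r=\Id$, that is $\gr_2 f_a^{(r)}=x_a[r-1]$.

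Finally, free generation is immediate: since $\{x_a\}_{a=1}^{n}$ is a basis of $\fg$, the family $\{x_a[r-1]:a=1,\dots,n,\ r\ge1\}$ is a basis of the vector space $\fg[t]$, and $\gr_2\mathcal{O}(G_1[[t^{-1}]])\simeq S(\fg[t])$ is the free commutative algebra generated by $\fg[t]$; hence the symbols $\gr_2 f_a^{(r)}=x_a[r-1]$ freely generate it. The genuinely delicate step is the identification of $\Sigma_r$ with the identity, where one must reconcile the coordinate-ring normalization of the functions $f_a^{(r)}$ with the current-algebra normalization of the generators $x_a[r-1]$ coming from $\gr_2 Y(\fg)=U(\fg[t])$; the compatibility of Proposition~\ref{locprop} and Theorem~\ref{th:F2} inside the bigraded algebra is exactly what makes these two normalizations coincide.
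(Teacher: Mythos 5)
Your first three steps are sound, and one of them is genuinely nice: the first assertion is indeed just Proposition~\ref{pr:F2}; the reduction of $\gr_2 f_a^{(r)}$ (understood as the class in $F_2^{(r-1)}/F_2^{(r-2)}$ inside $F_1$-degree $r$) to a well-defined linear map $\Sigma_r$ on $\fg^*=\mathfrak{m}_E/\mathfrak{m}_E^2$ is correct; and your surjectivity argument for $\Sigma_r$ --- pushing combinations $\sum c_{ij}t_{ij}^{(r)}$ whose $\gr_2$-symbol is $x[r-1]$ through the two routes $\gr_{12}$ and $\gr_{21}$, which compute the same canonical projection to $\bigrr Y(\fg)$ by Lemma~\ref{le:bigr} --- is legitimate, and is an alternative to a pure Poincar\'e-series count. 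The gap is the last step, ``comparison with the residue computation forces $\Sigma_r=\Id$''. Bijectivity of $\Sigma_r$ tells you that $F_2^{(r-2)}\cap\mathcal{O}_r$ (where $\mathcal{O}_r$ denotes the $F_1$-degree-$r$ component of $\mathcal{O}(G_1[[t^{-1}]])$) has the same codimension $\dim\fg$ in $\mathcal{O}_r$ as the span of the polynomial-degree-$\ge 2$ monomials under $\Phi^*$; it does not tell you that these two subspaces coincide. Without the containment (degree-$\ge2$ part) $\subset F_2^{(r-2)}$, the class of $f_a^{(r)}$ need not be represented by its linear term, so the residue formula $\Phi^*f_a^{(r)}=x_a[r-1]+(\text{degree}\ge2)$ by itself does not identify $\Sigma_r$ with the identity: two subspaces of equal codimension need not be equal.

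The mechanism you invoke to close this --- ``the compatibility of Proposition~\ref{locprop} and Theorem~\ref{th:F2}'' --- cannot do the job. Theorem~\ref{th:F2} asserts only that the span of the symbols $\gr_2 t_{ij}^{(r)}$ \emph{contains} $t^{r-1}\fg$; it says nothing about whether the combination $\sum c_{ij}t_{ij}^{(r)}$ with symbol $x[r-1]$ is the one satisfying $\sum c_{ij}\,d_E\Delta_{ij}=x$. That would be a matching of the $U(\fg[t])$-side normalization with the coordinate-ring (residue) normalization, and the paper itself shows such a matching is \emph{not} automatic: Lemma~\ref{psi} proves only that the two identifications of $\bigrr Y(\fg)$ with $S(\fg[t])$ differ by $x[k]\mapsto c^k x[k]$ for an undetermined constant $c$, and later arguments are organized precisely so as never to need $c=1$. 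If your final step were a consequence of the cited results, you would have proved $c=1$, which does not follow from anything you use. The correct completion stays entirely on the coordinate-ring side: applying Proposition~\ref{pr:F2} to products $f_{b_1}^{(s_1)}\cdots f_{b_m}^{(s_m)}$ and running a downward induction on $m$ within fixed $F_1$-degree $r=\sum s_i$, one shows every monomial $x_{b_1}[s_1-1]\cdots x_{b_m}[s_m-1]$ lies in $F_2^{(r-m)}$; hence the degree-$\ge2$ tail of $\Phi^*f_a^{(r)}$ lies in $F_2^{(r-2)}$ and $\gr_2 f_a^{(r)}$ is represented by $x_a[r-1]$. Combined with your dimension count (or with the Poincar\'e series of Corollary~\ref{th:F1}), this shows $F_2$ coincides with the loop-degree filtration transported by $\Phi^*$, which yields both the formula for the symbols and their free generation.
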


\begin{cor}\label{co:coord-indep}
Let $\varphi_1:(\fg,0)\to (G,E)$ and $\varphi_2:(\fg,0)\to (G,E)$ be formal diffeomorphisms such that $d_0\varphi_1=d_0\varphi_2$. Let $\Phi_1^*,\Phi_2^*:\mathcal{O}(G_1[[t^{-1}]])\to S(\fg[t])$ be the corresponding ring isomorphisms. Then we have $\gr_2\Phi_1^*=\gr_2\Phi_2^*$.   
\end{cor}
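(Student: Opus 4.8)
The plan is to reduce the statement to the case of a single formal diffeomorphism of $(\fg,0)$ tangent to the identity. Set $\psi:=\varphi_2^{-1}\circ\varphi_1\colon(\fg,0)\to(\fg,0)$; since $d_0\varphi_1=d_0\varphi_2$ we have $d_0\psi=\Id_\fg$. The diffeomorphism $\psi$ induces an automorphism $\Psi\colon t^{-1}\fg[[t^{-1}]]\to t^{-1}\fg[[t^{-1}]]$ (apply $\psi$ pointwise to a formal series), and by construction $\Phi_1=\Phi_2\circ\Psi$, whence $\Phi_1^*=\Psi^*\circ\Phi_2^*$. Once all three maps are seen to preserve the filtration $F_2$, passing to associated graded gives $\gr_2\Phi_1^*=\gr_2\Psi^*\circ\gr_2\Phi_2^*$, so it suffices to prove that $\gr_2\Psi^*=\Id$ on $\gr_2 S(\fg[t])=S(\fg[t])$.

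By Corollary~\ref{co:linear} the algebra $\gr_2 S(\fg[t])=S(\fg[t])$ is freely generated by the coordinate functions $x_a[r-1]$, and $\gr_2\Psi^*$ is an algebra homomorphism, so it is enough to evaluate it on these generators. The coordinate $x_a[r-1]$ reads off the $x_a$-component of the coefficient of $t^{-r}$ in a series $x(t)=\sum_{s\ge1}x^{(s)}t^{-s}$, hence $\Psi^*x_a[r-1]$ is the same component of the coefficient of $t^{-r}$ in $\psi(x(t))$. Expanding $\psi=\Id+\sum_{k\ge2}\psi^{[k]}$ into homogeneous components, the linear part contributes exactly $x_a[r-1]$, while $\psi^{[k]}(x(t))$ contributes a sum of monomials $x_{b_1}[s_1-1]\cdots x_{b_k}[s_k-1]$ with $s_1+\cdots+s_k=r$ and all $s_l\ge1$. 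Each such monomial has $F_2$-degree $\sum_l(s_l-1)=r-k\le r-2$ by the grading $\deg_2 x_a[s-1]=s-1$ (this is the $S(\fg[t])$-side incarnation of Proposition~\ref{pr:F2}). Therefore $\Psi^*x_a[r-1]\equiv x_a[r-1]\bmod F_2^{(r-2)}S(\fg[t])$, so $\gr_2\Psi^*$ fixes each generator and equals the identity. The same bound shows $\Psi^*$, and likewise $\Phi_i^*$, preserve $F_2$, which legitimizes the passage to associated graded used above.

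The only real content is this filtration estimate: the nonlinear Taylor coefficients of a diffeomorphism tangent to the identity are invisible at the top $F_2$-degree, so only $d_0\psi=\Id$ survives in $\gr_2$. Everything else is formal bookkeeping. The point to watch is that the two copies of $S(\fg[t])$ (the fixed coordinate algebra that is the target of $\Phi_i^*$, and $\gr_2\mathcal{O}(G_1[[t^{-1}]])$) are identified compatibly through Corollary~\ref{co:linear}, and that checking equality on the free generators $x_a[r-1]$ is enough precisely because $\gr_2\Phi_i^*$ are algebra homomorphisms.

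Alternatively, one can avoid the reduction and argue directly on generators of $\mathcal{O}(G_1[[t^{-1}]])$: compute $\Phi_i^*f^{(r)}$ as the coefficient of $t^{-r}$ in $(f\circ\varphi_i)(x(t))$, expand $f\circ\varphi_i$ in homogeneous parts, and use Proposition~\ref{pr:F2} to see that the homogeneous part of degree $k\ge2$ lands in $F_2^{(r-k)}\subset F_2^{(r-2)}$. The top $F_2$-degree part is then the linear contribution of $d_E f\circ d_0\varphi_i$, which is independent of $i$ because $d_0\varphi_1=d_0\varphi_2$, giving $\gr_2\Phi_1^*f^{(r)}=\gr_2\Phi_2^*f^{(r)}$ on generators and hence the claim.
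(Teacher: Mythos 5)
Your proof is correct and rests on exactly the same key estimate that the paper's (implicit) derivation of this corollary from Proposition~\ref{pr:F2} and Corollary~\ref{co:linear} uses: a Taylor component of degree $k\ge 2$ contributes only to $F_2^{(r-k)}\subset F_2^{(r-2)}$, so only the differential at the base point survives in $\gr_2$. Your reduction to the single automorphism $\Psi^*$ induced by $\varphi_2^{-1}\circ\varphi_1$ is a clean repackaging of this, and your closing ``alternative'' argument on the generators $f^{(r)}$ is precisely the paper's own route.
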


This means that under \emph{any} identification $\mathcal{O}(G_1[[t^{-1}]])\simeq S(\fg[t])$ as above, the grading $F_1$ on $S(\fg[t])$ is given by $\deg x[r-1]= r$ and the filtration $F_2$ on $S(\fg[t])$ is given by $\deg x[r-1]= r-1$, for any $x\in\fg$. The Poisson bracket on $\mathcal{O}(G_1[[t^{-1}]])\simeq S(\fg[t])$ descends to $\gr_2 S(\fg[t])=S(\fg[t])$. We denote the latter bracket by $\{\cdot,\cdot\}_\ell$.

\begin{lem}
\label{le:poisson} We have
$$\{ x[m], y[l]\}_\ell = [x,y][n+m],$$
for any $x,y \in \fg, m, l \geq 0$.
\end{lem}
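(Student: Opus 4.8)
The plan is to compute the induced bracket directly from the explicit Poisson bracket formula on $\mathcal{O}(G_1[[t^{-1}]])$, carefully tracking $F_2$-degrees. First I would choose representatives: pick $f,g\in\mathcal{O}(G)_+$ with $d_E f=x$ and $d_E g=y$. By Corollary~\ref{co:linear} we then have $x[m]=\gr_2 f^{(m+1)}$ and $y[l]=\gr_2 g^{(l+1)}$, and since $\{\cdot,\cdot\}_\ell$ is by definition the bracket induced on $\gr_2\mathcal{O}(G_1[[t^{-1}]])=S(\fg[t])$, the element $\{x[m],y[l]\}_\ell$ is the $\gr_2$-leading ($F_2$-degree $m+l$) term of $\{f^{(m+1)},g^{(l+1)}\}$, computed inside $\mathcal{O}(G_1[[t^{-1}]])$ via the explicit bracket formula.

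Next I would extract the relevant Fourier coefficient. Writing the explicit bracket with $\frac{1}{u-v}=\sum_{k\ge 0}v^k u^{-k-1}$ and collecting the coefficient of $u^{-(m+1)}v^{-(l+1)}$ gives
\begin{equation*}
\{f^{(m+1)},g^{(l+1)}\}=\sum_a\sum_{k=0}^{m}\left((\xi^L_{x_a}f)^{(m-k)}(\xi^L_{x_a}g)^{(k+l+1)}-(\xi^R_{x_a}f)^{(m-k)}(\xi^R_{x_a}g)^{(k+l+1)}\right).
\end{equation*}
The naive expansion produces spurious nonnegative powers of $v$, but these cancel because the numerator of the bracket formula vanishes on the diagonal $u=v$ (both invariant frames are orthonormal for the same bi-invariant form), so the extracted coefficient is the genuine one.

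Then comes the degree bookkeeping, which is the technical heart of the argument. By Proposition~\ref{pr:F2} the factor $(\xi^{L}_{x_a}f)^{(m-k)}$ lies in $F_2^{(m-k-1)}$ whenever $m-k\ge 1$, so for $k<m$ each summand lies in $F_2^{(m+l-1)}$ and contributes nothing to $\gr_2$ in degree $m+l$. Only the $k=m$ terms survive, and there the first factor is the constant $(\xi^{L}_{x_a}f)^{(0)}=(\xi^L_{x_a}f)(E)=\langle x,x_a\rangle$, which equals $(\xi^R_{x_a}f)(E)$ because the left and right fundamental vector fields coincide at $E$. Using $\sum_a\langle x,x_a\rangle\,\xi^{L}_{x_a}=\xi^L_x$ and the analogous identity for $R$, the degree-$(m+l)$ part becomes $\gr_2\big((\xi^L_x g-\xi^R_x g)^{(m+l+1)}\big)=d_E(\xi^L_x g-\xi^R_x g)\,[m+l]$, where I again invoke Corollary~\ref{co:linear}; note $\xi^L_x g-\xi^R_x g\in\mathcal{O}(G)_+$, so this differential is well defined.

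Finally I would identify this differential. The key observation is that $\xi^L_x-\xi^R_x$ is the fundamental vector field of the conjugation action $h\mapsto\exp(sx)\,h\,\exp(-sx)$, which fixes $E$; hence its linearization at the fixed point $E$ is the infinitesimal conjugation $\ad_x\in\End(\fg)$. Since for any vector field $\xi$ vanishing at $E$ one has $d_E(\xi h)=(\text{lin}_E\xi)^*\,d_E h$, and $\ad_x$ is skew with respect to the Killing form, this yields $d_E(\xi^L_x g-\xi^R_x g)=\pm[x,y]$, and therefore $\{x[m],y[l]\}_\ell=[x,y][m+l]$. I expect the only genuinely delicate point to be fixing all signs consistently — matching the conventions in the explicit bracket formula and in the definitions of $\xi^L_x,\xi^R_x$ against the skew-symmetry of $\ad_x$ — since a careless computation could produce $[y,x]$ rather than $[x,y]$; the sign is pinned to $+$ by the stated conventions.
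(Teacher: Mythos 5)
Your proof is correct, and although it starts from the same data as the paper's own argument — representatives $f,g\in\mathcal{O}(G)_+$ with $d_Ef=x$, $d_Eg=y$, the explicit $r$-matrix bracket on $\mathcal{O}(G_1[[t^{-1}]])$, and Corollary~\ref{co:linear} for leading terms — it executes the crucial step differently. The paper works with the whole generating series: it expands each momentum function $\wt{\xi^{L}_{x_a}f_x}(u)$, $\wt{\xi^{R}_{x_a}f_x}(u)$ to first order in the $F_2$-sense, as $(x_a,x)\pm\tfrac12\wt{[x_a,x]}(u)$ plus Hessian terms of $f_x$ that cancel between the left and right contributions, then sums over $a$ and divides by $u-v$; this requires the BCH-type computation of the linear part of the invariant vector fields. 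You instead extract one Fourier coefficient, use Proposition~\ref{pr:F2} to discard every summand with $k<m$ as lying in $F_2^{(m+l-1)}$, so that only the ``constant $\times$ top coefficient'' terms survive, and then identify the surviving leading term as $d_E(\xi^L_xg-\xi^R_xg)[m+l]$ via the linearization of the conjugation vector field at its fixed point $E$. Your route buys a cleaner treatment of the heart of the computation: you never need the first-order expansion of $\xi^{L}_{x_a}$, $\xi^{R}_{x_a}$, only their values at $E$ and the standard fact that $\mathrm{lin}_E(\xi^L_x-\xi^R_x)=\pm\ad x$; the cost is having to justify the asymmetric expansion $\frac{1}{u-v}=\sum_{k\ge0}v^ku^{-k-1}$, which you do correctly (the genuine bracket has only nonpositive powers of $v$, i.e.\ the numerator is divisible by $u-v$ in the relevant ring of series, so the spurious positive powers cancel). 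The sign ambiguity you flag is real but is resolved in your proof at exactly the same level as in the paper's, namely by matching the conventions for $\xi^L,\xi^R$ and the bracket formula against the target bracket $\{x[r],y[s]\}=[x,y][r+s]$ on $S(\fg[t])$; note also that the ``$n$'' in the statement is a typo for ``$l$'', and what both you and the paper prove is $\{x[m],y[l]\}_\ell=[x,y][m+l]$.
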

\begin{proof}
For any $x\in\fg$, denote by $\wt{x}(u)$ the formal series $\sum\limits_{r=1}^\infty x[r-1]u^{-r}\in S(\fg[t])[u^{-1}]$. Let $f_x\in\mathcal{O}(G)$ be a function such that $d_E f_x=x$ under the identification $\fg=\fg^*$. According to Corollary~\ref{co:linear}, we have  $f_x^{(r)}\in F_2^{(r-1)}\mathcal{O}(G_1[[t^{-1}]])$ and $\gr_2 f_x^{(r)}=x[r-1]$. Slightly abusing notations we will write $\gr_2\wt{f_x}(u)=\wt{x}(u)$.

For $x,y\in\fg$ we take the functions $f_x,f_y\in\mathcal{O}(G)$ as above and write the Poisson bracket $$\{\wt{f_x}(u),\wt{f_y}(v)\}=\frac{1}{u-v}(\sum\limits_{a=1}^{\dim \fg} \wt{\xi^L_{x_a}f_x}(u)\wt{\xi^L_{x_a}f_y}(v)-\wt{\xi^R_{x_a}f_x}(u)\wt{\xi^R_{x_a}f_y}(v)).$$
Taking the leading term of each coefficient in the expansion in the variables $u$ and $v$ on both sides of the equation, with respect to the filtration $F_2$, we get
$$\{\wt{x}(u),\wt{y}(v)\}_\ell=\frac{1}{u-v}(\sum\limits_{a=1}^{\dim \fg} \frac{1}{2}((x_a,x)[x_a,y(v)]+[x_a,x(u)](x_a,y)-(x_a,x)[y(v),x_a]-[x(u),x_a](x_a,y))).$$
Since $\sum\limits_{a=1}^{\dim \fg}((x_a,x)[x_a,y]=-\sum\limits_{a=1}^{\dim \fg}[x_a,x](x_a,y)=[x,y]$ we finally get
$$\{\wt{x}(u),\wt{y}(v)\}_\ell=[x(u),y(v)].
$$
\end{proof}

\subsection{}

From Proposition~\ref{2brackets} we obtain that
$\gr_{12} Y_V(\fg) \simeq \gr_{21} Y_V(\fg)$ as Poisson algebras and are isomorphic to $S(\fg[t])$ with the standard Kirillov-Kostant Poisson bracket.

We identify $\gr_{12} Y_V(\fg)$ with $S(\fg[t])$ and $\gr_{21} Y_V(\fg)$ with $S(\fg[t])$ thus obtain an automorphism of Poisson algebra
$\psi: S(\fg[t]) \to S(\fg[t])$.

\begin{lem}
\label{psi}
$\psi(x[k]) = c^kx[k], c \in \bc^*$.
\end{lem}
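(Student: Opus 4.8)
The plan is to regard $\psi$ as a \emph{bigraded} Poisson automorphism of $S(\fg[t])$ and pin it down degree by degree. Both identifications $\gr_{12} Y_V(\fg) \simeq S(\fg[t])$ and $\gr_{21} Y_V(\fg) \simeq S(\fg[t])$ preserve the bigrading in which $x[r-1]$ has bidegree $(r,r-1)$, and the canonical isomorphism $\gr_{12} Y_V(\fg) \simeq \bigrr Y_V(\fg) \simeq \gr_{21} Y_V(\fg)$ of Lemma~\ref{2brackets} is bigraded; hence $\psi$ preserves bidegree. First I would identify the homogeneous component of bidegree $(k+1,k)$: a monomial $x_{a_1}[k_1]\cdots x_{a_m}[k_m]$ has $\deg_1-\deg_2 = m$, so the difference of the two degrees equals the number of factors, and the only monomials of bidegree $(k+1,k)$ are the single generators $x[k]$. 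Thus this component is exactly $\fg[k]\cong\fg$, and $\psi$ restricts to a linear automorphism $\phi_k$ of $\fg$ with $\psi(x[k]) = \phi_k(x)[k]$.

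Next I would exploit that $\psi$ is a Poisson automorphism together with the bracket $\{x[k],y[l]\}_\ell = [x,y][k+l]$ of Lemma~\ref{le:poisson}. Applying $\psi$ to both sides of this identity gives the family of relations
$$\phi_{k+l}([x,y]) = [\phi_k(x),\phi_l(y)], \qquad x,y\in\fg,\ k,l\ge 0.$$
Setting $l=0$ shows $\theta := \phi_0$ is a Lie algebra automorphism of $\fg$ and that $\psi_k := \theta^{-1}\phi_k$ satisfies $\psi_k([x,y]) = [\psi_k(x),y]$, i.e. $\psi_k$ commutes with $\ad y$ for every $y$. Since $\fg$ is simple its adjoint representation is irreducible, so Schur's lemma forces $\psi_k = \lambda_k\,\Id_\fg$, that is $\phi_k = \lambda_k\theta$. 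Substituting back into the displayed relation and using that $\fg$ is nonabelian (so some $[x,y]\ne 0$) yields $\lambda_{k+l} = \lambda_k\lambda_l$ with $\lambda_0 = 1$ and $\lambda_k\ne 0$ (as $\psi$ is invertible on each component), whence $\lambda_k = c^k$ with $c := \lambda_1 \in \bc^*$. Therefore $\phi_k = c^k\theta$.

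It remains to check $\theta = \Id$, which is the only genuinely delicate point. The key is that at the lowest bidegree $(1,0)$ the two filtrations do not interfere: the generator $t_{ij}^{(1)}$ lies in $F_1^{(1)}\cap F_2^{(0)}$ and is nonzero in $\bigrr Y_V(\fg)$, so its class there is unambiguous. On the $\gr_{21}$ side this class is $\gr_2\Delta_{ij}^{(1)} = (d_E\Delta_{ij})[0]$ by Corollary~\ref{co:linear}, while on the $\gr_{12}$ side it is $\gr_2 t_{ij}^{(1)}\in\fg[0]$ as provided by Theorem~\ref{th:F2}; both equal the element of $\fg$ representing the matrix coefficient of $V$ in the entry $(i,j)$ under the identification $\fg = T_E G$, so the two identifications agree on $\fg[0]$. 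Hence $\phi_0 = \Id$, i.e. $\theta = \Id$, and we conclude $\psi(x[k]) = c^k x[k]$. I expect this last step to be the main obstacle: one must verify that the two a priori different identifications of the bidegree-$(1,0)$ component with $\fg$ coincide, which rests on matching the differential $d_E\Delta_{ij}$ of a matrix coefficient with the corresponding current-algebra generator $\gr_2 t_{ij}^{(1)}$.
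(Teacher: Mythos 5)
Your first two paragraphs are correct, and they take a genuinely different route from the paper: you control $\psi$ using only the Poisson identity of Lemma~\ref{le:poisson}, deducing that $\psi$ acts on each component $\fg[k]$ as $c^k\theta$ for a single Lie algebra automorphism $\theta=\phi_0$ of $\fg$, whereas the paper observes at the outset that the filtrations $F_1,F_2$ are stable under the adjoint action of $\fg$, so that $\psi$ is $\fg$-equivariant, and then applies Schur's lemma directly on the components $\fg[0]$ and $\fg[1]$ before running a similar bracket induction. The problem is your last step, $\theta=\Id$. You justify it by asserting that, under the identification $\gr_{12}Y_V(\fg)\simeq S(\fg[t])$, the class of $t_{ij}^{(1)}$ is the element of $\fg[0]$ dual to $d_E\Delta_{ij}$, ``as provided by Theorem~\ref{th:F2}''. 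But Theorem~\ref{th:F2} provides no such thing: it only asserts the containment $t^{r-1}\fg\subset \spann\{t_{ij}^{(r)}\}/F_2^{(r-2)}Y(\fg)$, i.e.\ that $\gr_2 t_{ij}^{(1)}$ lies in $\bc\oplus\fg\subset U(\fg[t])$, and says nothing about \emph{which} element of $\fg$ it is. Matching $\gr_2 t_{ij}^{(1)}$ on the nose with the element dual to $d_E\Delta_{ij}$ requires the explicit normalization of Wendlandt's isomorphism, and of the invariant form entering the $R$-matrix versus the Killing form used in Corollary~\ref{co:linear}; none of this is available in the paper. Indeed, deciding how the two identifications of the $(1,0)$-component with $\fg$ compare \emph{is} exactly the $k=0$ case of the lemma being proved, so it cannot simply be read off from the quoted statements — this is where the real content sits, and your proof asserts it rather than proves it.

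The gap is real but is closed by the paper's own first remark. Since commutators with elements of $F_1^{(1)}Y_V(\fg)\cap F_2^{(0)}Y_V(\fg)$ preserve both filtrations (the $F_2$-degree is not raised because $F_2$ is an algebra filtration, and the $F_1$-degree drops by one because $\gr_1 Y_V(\fg)$ is commutative), the adjoint action of $\fg$ descends to $\bigrr Y_V(\fg)$ and the canonical isomorphisms of Lemma~\ref{le:bigr} are $\fg$-equivariant; hence $\psi$, and in particular $\theta$, commutes with the adjoint action. A $\fg$-equivariant endomorphism of the irreducible adjoint module is $\lambda\Id$ by Schur, and your own observation that $\theta$ is a Lie algebra automorphism forces $\lambda[x,y]=\lambda^2[x,y]$, i.e.\ $\lambda=1$. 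With $\theta=\Id$ in hand, your multiplicativity argument $\lambda_{k+l}=\lambda_k\lambda_l$, $\lambda_k=c^k$ finishes the proof. Alternatively, your computational route would suffice if you proved (not asserted) that $\gr_2 t_{ij}^{(1)}$ is proportional, with a constant independent of $(i,j)$, to the element dual to $d_E\Delta_{ij}$ — the automorphism property of $\theta$ would then kill the constant — but establishing that proportionality again seems to need either the equivariance above or Wendlandt's explicit formulas, which lie outside the paper.
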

\begin{proof}
The filtrations $F_1,F_2$ are $\fg$-invariant, therefore $\psi$ is $\fg$-invariant. Let us identify $\gr_{12}$ and $\gr_{21}$ with bi-graded quotient.
We know that $$F_1^{(0)}Y(\fg) \cap F_2^{(0)}Y(\fg) = \bc \cdot 1,$$ $$\faktor{F_1^{(1)}Y(\fg) \cap F_2^{(0)}Y(\fg)}{F_1^{(0)}Y(\fg) \cap F_2^{(0)}Y(\fg)} = \fg,$$ $$\faktor{(F_1^{(2)}Y(\fg) \cap F_2^{(1)}Y(\fg))}{(F_1^{(1)}Y(\fg) \cap F_2^{(1)}Y(\fg) + F_1^{(2)}Y(\fg) \cap F_2^{(0)}Y(\fg))} = t \cdot \fg \simeq \fg$$ as $\fg$-modules. Note also that $$\faktor{F_1^{(1)}Y(\fg) \cap F_2^{(0)}Y(\fg)}{F_1^{(0)}Y(\fg) \cap F_2^{(0)}Y(\fg)} = \fg$$ is a Lie algebra isomorphism with respect to the Poisson bracket on the left hand side.

Using the fact that $\fg$ is simple we see that the only isomorphism of $\fg$ with itself is identity and isomorphism of $t \cdot \fg$ as $\fg$-module is the scalar of identity. 

From Lemma \ref{le:poisson} by induction on $r$ we have
$$\psi([x,y][r]) = \psi(\{x[r-1], y[1]\}) =  \{\psi(x[r-1]), c y[1]\} = \{c^{r-1} x[r-1], c y[1]\} = c^{r} [x,y][r].$$

Since $\fg$ is simple, we have $[\fg,\fg] = \fg$ therefore $\psi$ has the desired form.
\end{proof}

It will be useful for what follows to have a way  to write the leading term with respect to $F_2$ of any function of the form $f^{(r)}\in\mathcal{O}(G_1[[t^{-1}]])$ for any $f\in\mathcal{O}(G)$. For this, we fix a formal coordinate system in the neighborhood of $E\in G$, i.e. let $\varphi:(\fg,0)\to(G,E)$ be a formal diffeomorphism such that $d_0\varphi = \Id$. Then to any function $f\in\mathcal{O}(G)$ one can assign its Taylor expansion at $E\in G$, namely a collection of homogeneous polynomials $f_l\in S^l(\fg),\ l=0,1,\ldots$ such that $\varphi^*f=\sum\limits_{l=0}^{\infty} f_l$. 
We denote by $D$ the derivation on $S(\fg[t])$ determined by $$D(x[r-1])=rx[r].$$  

\begin{lem}\label{le:Taylor}
Suppose $f_k\in S^k(\fg)$ is the first nonzero term in the Taylor series of $f \in \mathcal{O}(\tilde G)_+$ at $E\in G$, $k>0$. 

Then we have \begin{enumerate} \item $f^{(r)}=0$ for $r<k$; \item $f^{(r)}\in F_2^{(r-k)}\mathcal{O}(G_1[[t^{-1}]])$ and $f^{(r)}\not\in F_2^{(r-k-1)}\mathcal{O}(G_1[[t^{-1}]])$ for $r\ge k$; \item $\gr_2 f^{(r)}=\frac{1}{(r-k)!}D^{r-k}f_k$ where $f_k\in S(\fg)\subset S(\fg[t])$ as a polynomial of the $x[0]$'s.
\end{enumerate}
\end{lem}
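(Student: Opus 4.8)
The plan is to transport the whole computation to $S(\fg[t])$ through a fixed formal coordinate system $\varphi$, write the Fourier coefficients $f^{(r)}$ as explicit polynomials in the generators $x_a[m]$, and then read off both the $F_2$-order and the leading term directly. First I would fix a formal diffeomorphism $\varphi:(\fg,0)\to(G,E)$ with $d_0\varphi=\Id$ and the induced grading-preserving isomorphism $\Phi:t^{-1}\fg[[t^{-1}]]\to G_1[[t^{-1}]]$, giving $\Phi^*:\mathcal{O}(G_1[[t^{-1}]])\xrightarrow{\sim} S(\fg[t])$. (For $f\in\mathcal{O}(\tilde G)_+$ this is legitimate because, as noted after Proposition~\ref{locprop}, $G_1[[t^{-1}]]$ depends only on the formal group of $G$, which coincides with that of $\tilde G$, so the Taylor expansion and the $f^{(r)}$ are defined verbatim.) Writing a point as $x(t)=\sum_{s\ge1}x_{-s}t^{-s}$, the defining relation $\wt f(g)=ev_g(f)$ gives
$$\wt f(\Phi(x(t)))=(\varphi^*f)(x(t))=\sum_{l\ge k}f_l(x(t)),$$
the first nonzero term occurring at $l=k$.

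The second step is to extract the coefficient of $t^{-r}$. Using the residue pairing, the generator $\xi[s-1]\in S(\fg[t])$ is exactly the coordinate $x(t)\mapsto\langle\xi,x_{-s}\rangle$, so writing $f_l=\prod_{i=1}^l\xi_i$ (and extending by linearity in $f_l$) I get
$$\Phi^*f^{(r)}=\sum_{l=k}^{r}\ \sum_{\substack{s_1+\cdots+s_l=r\\ s_i\ge1}}\ \prod_{i=1}^l\xi_i[s_i-1].$$
Since every term needs $l\le r$, all terms vanish for $r<k$, proving assertion (1). For the $F_2$-order I invoke Corollary~\ref{co:coord-indep}: $F_2$ on $S(\fg[t])$ is the (multiplicative) filtration with $\deg_2 y[m]=m$ and $\gr_2 S(\fg[t])=S(\fg[t])$. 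Each summand $\prod_i\xi_i[s_i-1]$ is $F_2$-homogeneous of degree $\sum_i(s_i-1)=r-l$, so the $l$-th block lies in $F_2^{(r-l)}$; the maximal order $r-k$ is attained exactly by the block $l=k$. This gives $f^{(r)}\in F_2^{(r-k)}$ and shows that $\gr_2 f^{(r)}$ equals the already-homogeneous $l=k$ block $T_k=\sum_{s_1+\cdots+s_k=r,\,s_i\ge1}\prod_{i=1}^k\xi_i[s_i-1]$.

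The final step is to identify $T_k$ with $\tfrac1{(r-k)!}D^{r-k}f_k$. Since $D$ is a derivation with $D(y[m])=(m+1)y[m+1]$, we have $D^{m}(\xi[0])=m!\,\xi[m]$, and the Leibniz (multinomial) expansion yields the identity
$$\frac{1}{N!}\,D^{N}\bigl(\xi_1[0]\cdots\xi_k[0]\bigr)=\sum_{\substack{m_1+\cdots+m_k=N\\ m_i\ge0}}\xi_1[m_1]\cdots\xi_k[m_k],$$
because $\binom{N}{m_1,\dots,m_k}\prod_i m_i!=N!$. Taking $N=r-k$ and $m_i=s_i-1$ matches $T_k$ term by term, giving assertion (3). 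Finally, $f_k\neq0$ forces $T_k\neq0$: distinct monomials in the $\xi[0]$'s have distinct multisets of basis vectors, so their spreads involve products of pairwise distinct families of generators and are linearly independent; hence $f_k\mapsto D^{r-k}f_k$ is injective and $f^{(r)}\notin F_2^{(r-k-1)}$, completing (2). I expect the main obstacle to be not the combinatorics but the bookkeeping that makes the leading-term extraction legitimate: namely that $F_2$ is multiplicative with associated graded $S(\fg[t])$ (so leading terms multiply, via Corollary~\ref{co:coord-indep}), the injectivity of the spreading operator $D^{r-k}$, and the coordinate-independence of the answer — the latter following because $\gr_2\Phi^*$ is $\varphi$-independent by Corollary~\ref{co:coord-indep} while $f_k$ is itself the $\varphi$-independent principal symbol of $f$ at $E$. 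The case $k=1$ recovers $\gr_2 f_a^{(r)}=x_a[r-1]$ of Corollary~\ref{co:linear}, a useful consistency check.
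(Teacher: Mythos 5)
Your proof is correct and follows essentially the same route as the paper's: both transport the computation to $S(\fg[t])$ via a formal coordinate system, observe that the $F_2$-leading term of $f^{(r)}$ comes from the lowest Taylor block $f_k$, and identify the Fourier coefficients of that block with $\frac{1}{(r-k)!}D^{r-k}f_k$ --- your multinomial identity is exactly the coefficient-wise form of the paper's generating-function identity $\Phi^*\wt{f_k}(u)=\exp(u^{-1}D)f_k$. The only differences are presentational: you re-derive assertions (1) and (2) from the explicit expansion rather than citing Proposition~\ref{pr:F2} and Corollary~\ref{co:linear}, and you spell out the injectivity of $D^{r-k}$ on $S^k(\fg)$ (via the disjoint supports of the spreads of distinct basis monomials), a nonvanishing point the paper leaves implicit.
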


\begin{proof} 
The first assertion follows immediately from Proposition~\ref{pr:F2}. The second one follows from Corollary~\ref{co:linear}. To show the last equality, note that $\Phi^*\wt{f}(u)=\sum\limits_{l=k}^{\infty} \wt{f_l}(u)$. According to the assertions (1-2) and by Corollary~\ref{co:coord-indep} the leading term of any Fourier coefficient with respect to the filtration $F_2$ is given by that of $\Phi^*\wt{f_k}(u)$. On the other hand for any $x\in\fg$ the corresponding series $\wt{x}(u)=\sum\limits_{r=1}^\infty x[r-1]u^{-r}$ rewrites as $\wt{x}(u)=\exp (u^{-1}D) x[0]$. So we have $\Phi^*\wt{f_k}(u)=\exp (u^{-1}D) f_k$ as well, hence the assertion. 
\end{proof}

\section{Bethe subalgebras in Yangian}

\subsection{Definition.}
Let $\rho_i:Y_V(\fg)\to\End V(\omega_i,0)$ be the $i$-th fundamental representation of $Y(\fg)$. $V(\omega_i,0)$ is also a representation of $U(\fg)$ (because $U(\fg) \subset Y_V(\fg)$) and hence can be regarded as a representation of $\tilde G$. Slightly abusing notation we denote this group representation by the same symbol $\rho_i$.

Let $$\pi_i: V \to V(\omega_i,0)$$ be the projection. 

Let $T^i(u)=\pi_iT(u)\pi_i$ be the submatrix of $T(u)$-matrix, corresponding to the $i$-th fundamental representation.
\begin{defn}
\label{bethe}
Let $C \in \tilde G$. Bethe subalgebra $B(C)\subset Y_V(\fg)$ is the subalgebra generated by all coefficients of the following series  with the coefficients in $Y_V(\fg)$
$$\tau_i(u,C) = \tr_{V(\omega_i,0)} \rho_i(C)T^i(u),\ \  1 \leqslant i \leqslant n.$$
\end{defn}
\begin{rem}
\emph{In fact $B(C)$ depends only on the class of $C$ in $\tilde G / Z(\tilde G)$, i.e. on an element of adjoint group $G$.}
\end{rem}

\begin{prop}(\cite{ir2}, \cite{ilin})
\begin{enumerate}
\item Bethe subalgebra $B(C)$ is commutative for any $C \in G$.
\item $B(C)$ is a maximal commutative subalgebra of $Y(\fg)$ for $C \in T^{reg}$.
\end{enumerate}
\end{prop}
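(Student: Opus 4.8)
The plan is to establish the two assertions by entirely different means: commutativity is a formal consequence of the $RTT$ relation together with the $\fg$-invariance of the $R$-matrix, whereas maximality is a statement about the size of $B(C)$ that I would control by passing to an associated graded.

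For the first assertion I would run the standard commuting--transfer--matrix computation, the only needed property being that the group element interacts trivially with the $R$-matrix. Since the universal $R$-matrix intertwines $\Delta$ and $\Delta^{\mathrm{op}}$, and every $x\in\fg\subset Y(\fg)$ is primitive, $R(u-v)$ commutes with $\rho(x)\otimes 1+1\otimes\rho(x)$ on $V\otimes V$; exponentiating gives $[R(u-v),\rho(g)\otimes\rho(g)]=0$ for all $g\in\tilde G$, and restricting to the block acting on $V(\omega_i,0)\otimes V(\omega_j,0)$ yields $[\rho_i(C)\otimes\rho_j(C),R_{ij}(u-v)]=0$. I would then write $\tau_i(u,C)\tau_j(v,C)=\tr_{12}\big((\rho_i(C)\otimes\rho_j(C))\,T^i_1(u)T^j_2(v)\big)$, substitute the block $RTT$ relation in the form $T^i_1(u)T^j_2(v)=R_{ij}(u-v)^{-1}T^j_2(v)T^i_1(u)R_{ij}(u-v)$, push $\rho_i(C)\otimes\rho_j(C)$ through $R_{ij}^{-1}$ using the commutation just recorded, and cancel $R_{ij}$ against $R_{ij}^{-1}$ by cyclicity of the trace over $\End\big(V(\omega_i,0)\otimes V(\omega_j,0)\big)$ (legitimate because the $R$-matrix has scalar entries). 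What remains is $\tr_{12}\big((\rho_i(C)\otimes\rho_j(C))\,T^j_2(v)T^i_1(u)\big)=\tau_j(v,C)\tau_i(u,C)$, so all Fourier coefficients of all the $\tau_i(u,C)$ commute.

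For the second assertion I would first note that $B(C)$ already contains the Cartan. The $u^{-1}$-coefficient of $\tau_i(u,C)$ equals $\sum_a \tr_{V(\omega_i,0)}\big(\rho_i(C)\rho_i(x_a)\big)\,x_a\in\fg\subset Y(\fg)$; the root-vector contributions vanish since $\rho_i(C)$ is diagonal in a weight basis while $\rho_i(e_\alpha)$ shifts weights, so this symbol lies in $\fh$, and a direct weight computation shows that for $C\in T^{reg}$ these elements span $\fh$ as $i$ ranges over the fundamental weights. Hence $\fh\subset B(C)$, so every element of $Y(\fg)$ commuting with $B(C)$ is $\fh$-invariant, i.e. it lies in $Y(\fg)^{\fh}=Y(\fg)^{\fz_\fg(C)}$ (using $\fz_\fg(C)=\fh$ for regular $C$). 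Maximality of $B(C)$ in $Y(\fg)$ is thereby reduced to its maximality inside $Y(\fg)^{\fz_\fg(C)}$, which is the third bullet of Theorem A.

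The genuine content, and the main obstacle, is thus this maximality inside the invariants, which I would prove at the classical level and lift. Passing to the filtration $F_1$, whose associated graded is the Poisson algebra $\mathcal{O}(G_1[[t^{-1}]])$, Proposition \ref{locprop} identifies the symbols $\gr_1\tau_i(u,C)$ with the Fourier coefficients of the classical transfer functions $\tr_{V(\omega_i,0)}\rho_i(C)g(t)$; since $\fz_\fg(C)=\fh$ is a torus, taking invariants commutes with $\gr_1$, and the filtered-to-graded principle then reduces everything to showing that $\gr_1 B(C)$ is maximal Poisson-commutative in $\mathcal{O}(G_1[[t^{-1}]])^{\fz_\fg(C)}$. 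Establishing this classical completeness---equivalently, that for regular $C$ the commuting Hamiltonians $\tr_V\rho(C)g(t)$ attain the maximal transcendence degree and cut out Lagrangian fibers on the symplectic leaves of $G_1[[t^{-1}]]$---is the only non-formal step, and is precisely what is carried out in \cite{ir2} and \cite{ilin}.
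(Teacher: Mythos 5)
This Proposition is not proved in the paper at all: it is quoted from \cite{ir2} and \cite{ilin}, and of the two parts only (1) is ever used later (commutativity enters the proofs of Theorem A through the Poisson-commutativity of $\gr_{12}B(C)$), so your attempt must be measured against the cited sources and against the machinery the paper builds independently in Sections 4--5. Your proof of (1) is correct and is exactly the standard commuting-transfer-matrix argument of \cite{ir2}; you handle the two points that actually need care, namely that the image of the universal $R$-matrix in $\End(V)^{\otimes 2}$ is block-diagonal (since $\rho(Y(\fg))\subset\bigoplus_i\End V(\omega_i,0)$, so the $RTT$ relation restricts to the blocks $V(\omega_i,0)\otimes V(\omega_j,0)$), and that the cyclicity of the trace is legitimate because $R_{ij}$ has scalar entries.

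For (2), your reduction is sound and in fact makes explicit a bridge the paper leaves implicit: Corollary \ref{maxc} only asserts maximality inside $Y(\fg)^{\fz_\fg(C)}$, and your observation that the $u^{-1}$-coefficients of $\tau_i(u,C)$ lie in $\fh$ and span it for $C\in T^{reg}$ upgrades this to maximality in all of $Y(\fg)$; note, however, that this spanning is not ``a direct weight computation'' but precisely Steinberg's linear independence of differentials of fundamental characters at regular elements \cite{steinberg}, which the paper itself invokes for the analogous step in Proposition \ref{size}. Citing the third bullet of Theorem A here is not circular, since its proof in the paper uses only part (1). The genuine gap is in your last paragraph: maximal transcendence degree (completeness, Lagrangian fibres on symplectic leaves) is \emph{not} equivalent to maximality as a Poisson-commutative subalgebra --- any proper subalgebra over which a maximal one is algebraic is still complete, so completeness alone proves nothing about centralizers. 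Closing exactly this gap is the content of the paper's Proposition \ref{centr} together with Lemma \ref{514}: the classical algebra is identified as the full centralizer of a single quadratic element via algebraic closedness and a degeneration argument, and only then does the filtered-to-graded principle of Corollary \ref{maxc} yield maximality of $B(C)$. So as written, your proof of (2) either rests entirely on the citations \cite{ir2}, \cite{ilin} --- in which case it is no more a proof than the paper's own citation --- or the transcendence-degree step must be replaced by a centralizer/algebraic-closedness argument of the above type.
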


\subsection{Bethe subalgebras in $\mathcal{O}(G_1[[t^{-1}]])$}
Here we follow \cite{ir2}.
Let $\{V_{\omega_i}\}_{i=1}^{n}$ be the set of all fundamental representations of $\fg$. We also consider  $\{V_{\omega_i}\}_{i=1}^{n}$ as a representations of the corresponding simply-connected group $\tilde G$.
Let $\Lambda_i$ be some basis of $V_{\omega_i}$. For any $v \in \Lambda_i$ we denote the corresponding element of dual basis by $v^* \in \Lambda_i^*$. By $\Delta_{v, v^*}\in \mathcal{O}(\tilde{G})$ we denote the corresponding matrix coefficient of $V_{\omega_i}$.

\begin{defn}
\label{def}
Let $C \in \tilde G$. Bethe subalgebra $\tilde B(C)$ of $\mathcal{O}(G_1[[t^{-1}]])$ is the subalgebra generated by of the coefficients of the following series:
$$\sigma_i(u,C) = \tr_{V_{\omega_i}} \rho_i(C)\rho_i(g) = \sum_{v \in \Lambda_i} \Delta_{v,v^{*}}(Cg) = \sum_{r=0}^{\infty} \sum_{v \in \Lambda_i} \Delta_{v,v^{*}}^{(r)}(Cg) u^{-r},$$
where $\Lambda_i$ is some basis of $V_{\omega_i}$, $g \in G_1[[t^{-1}]]$.
\end{defn}

\begin{rem}
This subalgebra depends only on the class of $C$ in $\tilde G / Z(\tilde G)$ as well.
\end{rem}

\begin{rem}
One can define the same subalgebra using all finite-dimensional representations of $\tilde G$.
\end{rem}

\begin{prop}(\cite{ir2})
\label{gr}
We have $\gr_1 B(C) = \tilde B(C)$ for any $C \in T^{reg}$.
\end{prop}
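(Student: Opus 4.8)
The plan is to prove the two inclusions separately: the inclusion $\tilde B(C)\subseteq\gr_1 B(C)$ is elementary and holds for every $C$, while the reverse inclusion is where the hypothesis $C\in T^{reg}$ enters and constitutes the main difficulty.

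First I would compute the principal $F_1$-symbols of the generators of $B(C)$. Writing out the trace, the coefficient of $u^{-r}$ in $\tau_i(u,C)=\tr_{V(\omega_i,0)}\rho_i(C)T^i(u)$ is
$$b_i^{(r)}(C)=\sum_{j,k}(\rho_i(C))_{jk}\,t_{kj}^{(r)}\in F_1^{(r)}Y_V(\fg),$$
where $j,k$ range over a basis of the $i$-th block. By Proposition~\ref{locprop} we have $\gr_1 t_{kj}^{(r)}=\Delta_{kj}^{(r)}$, and since each $t_{kj}^{(r)}$ lies in $F_1^{(r)}$, linearity of the projection $F_1^{(r)}Y_V(\fg)\to F_1^{(r)}Y_V(\fg)/F_1^{(r-1)}Y_V(\fg)$ shows that the class of $b_i^{(r)}(C)$ in the degree-$r$ component of $\gr_1 Y_V(\fg)=\mathcal{O}(G_1[[t^{-1}]])$ equals $\sum_{j,k}(\rho_i(C))_{jk}\Delta_{kj}^{(r)}$, which is precisely the coefficient $\sigma_i^{(r)}(C)$ of $u^{-r}$ in $\sigma_i(u,C)$. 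Since $b_i^{(r)}(C)\in F_1^{(r)}B(C)$, its symbol $\sigma_i^{(r)}(C)$ belongs to $\gr_1 B(C)$ (trivially so when $\sigma_i^{(r)}(C)=0$), and because $\gr_1 B(C)$ is a subalgebra we obtain $\tilde B(C)\subseteq\gr_1 B(C)$ for all $C\in\tilde G$.

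The reverse inclusion is the crux, and the obstacle is exactly the phenomenon exhibited by the example $U=\bc[x+y]\subset\bc[x,y]$ of Section~2: the associated graded of a subalgebra can be strictly larger than the algebra generated by the symbols of its generators, because of hidden lower-degree leading terms. To exclude these I would argue by size. The operation $\gr_1$ preserves the graded dimensions with respect to $F_1$, so $B(C)$ and $\gr_1 B(C)$ have equal Poincar\'e series $P(B(C))=P(\gr_1 B(C))$; combined with the inclusion just established, the whole statement reduces to the equality $P(\tilde B(C))=P(B(C))$ for $C\in T^{reg}$. Indeed, once the two Poincar\'e series coincide, an inclusion of graded algebras with equal Poincar\'e series is forced to be an equality, giving $\tilde B(C)=\gr_1 B(C)$.

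Thus I would spend the main effort computing both sizes for regular $C$ and matching them: on the quantum side via the lower bound for the size of $B(C)$ together with the $RTT$-description of \cite{ir2}, and on the classical side by computing the size of $\tilde B(C)\subset\mathcal{O}(G_1[[t^{-1}]])$ directly from the generators $\sigma_i(u,C)$, using that for $C\in T^{reg}$ the operator $\rho_i(C)$ is regular semisimple and the resulting functions on $G_1[[t^{-1}]]$ are as independent as the dimension count permits. I expect this matching of sizes --- equivalently, ruling out any degree collapse and showing that $\tilde B(C)$ already attains the maximal size of $B(C)$ --- to be the principal obstacle. A conceptually cleaner route that I would develop in parallel is to use maximality: for $C\in T^{reg}$ the subalgebra $B(C)$ is maximal commutative in $Y(\fg)$, hence $\gr_1 B(C)$ is Poisson-commutative in $\mathcal{O}(G_1[[t^{-1}]])$ and contains $\tilde B(C)$; proving that $\tilde B(C)$ is itself a maximal Poisson-commutative subalgebra for regular $C$ then yields $\tilde B(C)=\gr_1 B(C)$ immediately.
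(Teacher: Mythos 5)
Your first step is correct and is exactly the inclusion the paper takes from \cite{ir2}: by Proposition~\ref{locprop} the $F_1$-symbol of the coefficient of $u^{-r}$ in $\tau_i(u,C)$ is the coefficient $\sigma_i^{(r)}(C)$ of $u^{-r}$ in $\sigma_i(u,C)$, whence $\tilde B(C)\subseteq\gr_1 B(C)$ for every $C$ (this is the inclusion invoked in the proof of Proposition~\ref{size}). The genuine gap is in your main route for the reverse inclusion. The inclusion you have just proved already gives $P(\tilde B(C))\le P(\gr_1 B(C))$ coefficientwise, so ``matching the sizes'' means proving the \emph{opposite} inequality, i.e.\ an upper bound on $\gr_1 B(C)$ --- and neither of the tools you name can produce one: a lower bound for the size of $B(C)$ points in the wrong direction, and computing $P(\tilde B(C))$ exactly only sharpens the side you already control. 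The missing idea, which is how the cited proof in \cite{ir2} closes the argument (the paper quotes precisely this: the symbols of the coefficients of $\tau_1(u,C),\ldots,\tau_n(u,C)$ \emph{freely generate} $\gr_1 B(C)$), is that algebraic independence of the symbols by itself rules out any degree collapse. Indeed, since $B(C)$ is commutative, any element is $u=P\bigl(\tau_i^{(r)}(C)\bigr)$ for a polynomial $P$; writing $P_D$ for its top weighted-homogeneous part (with $\tau_i^{(r)}$ given weight $r$), the $F_1$-symbol of $u$ equals $P_D\bigl(\sigma_i^{(r)}(C)\bigr)$, which is nonzero exactly because the $\sigma_i^{(r)}(C)$ are algebraically independent; hence every symbol of an element of $B(C)$ lies in $\tilde B(C)$, i.e.\ $\gr_1 B(C)\subseteq\tilde B(C)$, with no Poincar\'e series comparison at all. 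The algebraic independence is where $C\in T^{reg}$ enters, via the differentials of $\sigma_i^{(r)}(C)$ at $\exp(e)$ and Steinberg's theorem on differentials of fundamental characters at regular elements --- the computation reproduced in the proof of Proposition~\ref{size} (Lemmas~\ref{gr2} and~\ref{lem1}). Your worry about ``hidden lower-degree leading terms'' is dissolved by this observation, not by counting.

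Your parallel route via maximality is logically sound and is, in fact, how the present paper later proves the stronger statement for all $C\in T$ (the corollary following Corollary~\ref{maxc}: $\gr_1 B(C)\supset\tilde B(C)$ and $\tilde B(C)$ is maximal Poisson-commutative). Two caveats, though. First, the maximality available is not in all of $\mathcal{O}(G_1[[t^{-1}]])$ but in the invariants $\mathcal{O}(G_1[[t^{-1}]])^{\fz_\fg(C)}=\mathcal{O}(G_1[[t^{-1}]])^{\fh}$ for regular $C$, so you must also check $\gr_1 B(C)\subseteq\mathcal{O}(G_1[[t^{-1}]])^{\fh}$ (which holds because the generators of $B(C)$ are $\fh$-invariant). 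Second, and more seriously for a self-contained proof: the maximality of $\tilde B(C)$ is not a lemma one can simply insert --- in the paper it is the output of the entire Gaudin-subalgebra machinery (Proposition~\ref{centr}, Theorem~\ref{result1}, Corollaries~\ref{corol3} and~\ref{maxc}), which itself only needs the easy inclusion, so there is no circularity in the paper's logic, but as written your second route defers the whole difficulty to an unproven hard statement rather than closing it.
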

We generalize this Proposition \ref{gr} to any $C \in T$ below.

\subsection{Size of a Bethe subalgebra.}
Consider $B(C)$ with $C \in T$.
In the next proposition we use the filtration $F_1$.

\begin{prop} (Lower bound for the size of Bethe subalgebra, see also \cite{ir})
\label{size}
Bethe subalgebra $B(C)$ contains $\rk \fg$ infinite series of algebraically independent elements such that every series consist of elements with the degrees $m_i+1, m_i +2, \ldots$, where $m_i$ are the exponents of $\fg$, $i = 1, \ldots, \rk \fg$.
\end{prop}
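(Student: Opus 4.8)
\emph{The plan is to prove everything in the classical limit and descend to the bigraded algebra.} Since $\tau_i(u,C)=\tr_{V(\omega_i,0)}\rho_i(C)T^i(u)$ is \emph{linear} in $T$, its coefficient $\tau_i(u,C)^{(r)}$ is pure of $F_1$-degree $r$, and by Proposition~\ref{locprop} its symbol is $\gr_1\tau_i(u,C)^{(r)}=\sigma_i(u,C)^{(r)}=\sum_{jk}\rho_i(C)_{kj}\Delta_{jk}^{(r)}$. Hence $\gr_1 B(C)\supseteq\tilde B(C)$, and the $F_1$-degree of $\sigma_i(u,C)^{(r)}$ equals $r$. Because algebraic independence of symbols implies algebraic independence of any lifts (with the same $F_1$-degree), it suffices to produce inside $\tilde B(C)$ a family of $\rk\fg$ algebraically independent series of $F_1$-degrees $m_i+1,m_i+2,\dots$. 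To test independence I would descend once more, to the associated bigraded algebra $\gr_{21} Y(\fg)\simeq S(\fg[t])$: algebraic independence of the $F_2$-symbols there implies algebraic independence in $\tilde B(C)$, the $F_1$-degree being preserved throughout.

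Next I would choose convenient generators and compute their symbols via Lemma~\ref{le:Taylor}. Left translation $L_C\colon f\mapsto f(C\,\cdot\,)$ is an algebra automorphism of $\mathcal O(\tilde G)$, and since $\wt{\,\cdot\,}$ is multiplicative and the characters $\chi_V$ generate $\mathcal O(\tilde G)^{\tilde G}$, the algebra $\tilde B(C)$ is spanned by the Fourier coefficients of $\wt{L_C h}$ for $h\in\mathcal O(\tilde G)^{\tilde G}$. Applying Lemma~\ref{le:Taylor} to $L_C h-h(C)\in\mathcal O(\tilde G)_+$: if the first nonzero Taylor term of $L_C h$ at $E$ is $(L_C h)_k\in S^k(\fg)$, then $\gr_2$ of its $r$-th Fourier coefficient is $\tfrac1{(r-k)!}D^{r-k}(L_C h)_k$, of $F_1$-degree $r$, for all $r\ge k$. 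Thus each $h$ contributes a series of $F_1$-degrees $k,k+1,\dots$ whose bigraded symbols are the $D$-translates of the single polynomial $(L_C h)_k$.

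The heart of the argument is to select $h_1,\dots,h_n\in\mathcal O(\tilde G)^{\tilde G}$ whose leading terms are the right invariants. The leading terms at $E$ of $L_C\big(\mathcal O(\tilde G)^{\tilde G}\big)$ are the leading terms at $C$ of $\mathcal O(\tilde G)^{\tilde G}$, i.e. the graded ring of the tangent cone to the adjoint quotient at the image of $C$; this cone is cut out by the basic invariants of the reductive Lie algebra $\fz_\fg(C)$ (equivalently by the $W_{\fz_\fg(C)}$-invariants on $\fh$), of degrees $k_1\le\cdots\le k_n$. These degrees are dominated by those of $\fg$, namely $k_i\le m_i+1$, with equality exactly for $C=E$, where one recovers the fundamental $\fg$-invariants $\Phi_l$ of degrees $d_l=m_l+1$. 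Choosing $h_i$ with leading term a homogeneous $\fz_\fg(C)$-invariant $\Psi_i=(L_C h_i)_{k_i}$ of degree $k_i$, the resulting symbols $\tfrac1{(r-k_i)!}D^{r-k_i}\Psi_i$ are precisely the generators of the classical Gaudin subalgebra $A_{\fz_\fg(C)}\subset S(\fz_\fg(C)[t])\subset S(\fg[t])$; these freely generate $A_{\fz_\fg(C)}$, hence are algebraically independent.

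Putting this together, the $n=\rk\fg$ series have algebraically independent bigraded symbols, so they are algebraically independent in $\tilde B(C)$, the $i$-th series carrying $F_1$-degrees $k_i,k_i+1,\dots$. Since $k_i\le m_i+1$, discarding from the $i$-th series the finitely many terms of $F_1$-degree below $m_i+1$ leaves an algebraically independent family of $F_1$-degrees exactly $m_i+1,m_i+2,\dots$; lifting through $\gr_2$ and $\gr_1$ yields the required elements of $B(C)$. I expect the main obstacle to be the third step: identifying the tangent cone of the adjoint quotient at $C$ with the centralizer invariants uniformly in $C$, producing honest global invariants $h_i\in\mathcal O(\tilde G)^{\tilde G}$ with the prescribed homogeneous leading terms, and checking both the degree domination $k_i\le m_i+1$ and the freeness (hence algebraic independence) of the classical Gaudin generators of $\fz_\fg(C)$. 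This is the classical shadow of Theorem~A; the remaining steps are bookkeeping with the two filtrations.
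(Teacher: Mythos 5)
Your plan is viable, but it is not the argument the paper attaches to this proposition: it is, almost step for step, the paper's \emph{second} proof, given later as Proposition~\ref{proof2} (reduce to $\tilde B(C)\subset\mathcal{O}(G_1[[t^{-1}]])$, compute $F_2$-symbols of Fourier coefficients by Lemma~\ref{le:Taylor}, and recognize the free generators $\frac{1}{(r-k_i)!}D^{r-k_i}\Phi_i$ of $A_{\fz_\fg(C)}$). Within that route, the step you flag as ``the main obstacle'' is a genuine gap, and your formulation of it would not work as stated: the adjoint quotient $\tilde G/\!/\tilde G$ is an affine space (Steinberg--Chevalley), so its tangent cone at the image of $C$ is again an affine space with all coordinates in degree one --- it carries no trace of the degrees $k_i$. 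What you actually need is a statement about leading terms of \emph{pullbacks}: the image of $\mathcal{O}(\tilde G)^{\tilde G}$ under Taylor expansion at $C^{-1}$ in slice coordinates, followed by taking lowest-degree terms inside $S(\fz_\fg(C))^{\fz_\fg(C)}$, must contain each basic invariant $\Phi_i$. Proving this is the real content of Proposition~\ref{proof2}: one uses the slice $\Psi(h,x)=\exp(-x)C^{-1}\exp(h)\exp(x)$ together with connectedness of $Z_{\tilde G}(C)$ (which is why the paper works in the simply connected group) to see that Taylor series of central functions lie in $\wt S(\fz_\fg(C))^{\fz_\fg(C)}$; then Kostant's theorem $d_e\colon J/J^2\simeq\fz_{\fz_\fg(C)}(e)$ (\cite{kostant}) and Steinberg's theorem that $C^{-1}\exp(e)$ is regular with independent character differentials (Lemma~\ref{lem1}, \cite{steinberg}) give surjectivity of the map $\Theta$; finally the principal $\fsl_2$-eigenspace decomposition of $\fz_{\fz_\fg(C)}(e)$ pins down which degrees can occur. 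Your degree domination $k_i\le m_i+1$ is also asserted rather than proved; it is true, and follows from the inclusion $\bc[\fh]^{W}\subset\bc[\fh]^{W_{\fz_\fg(C)}}$ of graded polynomial rings of the same rank, but it is an ingredient, not a triviality.

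By contrast, the proof the paper actually gives for this proposition is shorter and bypasses leading terms and Kostant's theorem altogether. It extends the generators $\sigma_i(C)^{(r)}$ to $G((t^{-1/2}))$ and computes their differentials at the single point $\exp(t^{-1}e)\in G_1[[t^{-1}]]$, where $e$ is a principal nilpotent of $\fz_\fg(C)$: conjugation invariance under $t^{\tilde\rho}$ reduces this to differentials at the constant point $\exp(e)$ (Lemma~\ref{gr2}), and Steinberg's independence of fundamental character differentials at the regular element $C^{-1}\exp(e)$ shows that suitable linear combinations $\sigma_{v_j}(C)^{(r)}$ have differentials pairing with distinct components $v_jt^{-s}$, the degree $s$ shifted from $r$ by the corresponding exponent of $\fz_\fg(C)$. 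Linear independence of infinitely many differentials at one well-chosen point then yields algebraic independence, with the degrees falling out of the $t^{\tilde\rho}$-weights. So the two approaches trade differently: the paper's Section~3 proof needs only Steinberg plus linear algebra at a point and requires no knowledge of $A_{\fz_\fg(C)}$ (in particular not the freeness of its generators, which you invoke); your route proves the stronger statement that all generators of $A_{\fz_\fg(C)}$ occur in $\gr_{21}B(C)$ --- exactly what the paper needs later for Theorem~A, which is why it appears there as a second, independent proof.
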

\begin{proof}

Analogous to \cite[Proposition 4.8]{ir2} we have $\gr_1 B(C) \supset \tilde B(C)$.
We are going to find a set of algebraically independent elements in $\tilde B(C)$ of the same degrees as in Proposition statement with respect to the grading obtained from filtration $F_1$. 

Let $\sigma_i(u,C)$ be generators of Bethe subalgebra $\tilde B(C) \subset \mathcal{O}(G_1[[t^{-1}]])$.
One can extend $\sigma_i(u,C)$ to the group $G((t^{-1/2}))$ by means of Definition \ref{def}.  Denote by $\sigma_i(C)^{(r)}$ the coefficient of $u^{-r}$ in $\sigma_i(u, C)$. 

For any coweight $\nu$ of the maximal torus $T\subset G$, we denote by $t^\nu$ the corresponding $1$-parametric subgroup. Note that $t^\nu$ can be regarded as a $\mathbb{C}((t^{-1}))$-point of $T\subset G$, hence as an element of $G((t^{-1}))$. Consider also the element $t^{\tilde \rho} \in G((t^{-1/2}))$, $\tilde \rho = \sum_i \tilde \omega_i$, where $\tilde \omega_i$ are fundamental co-weights of $\mathfrak{z}_{\fg}(C)$. Note that this is well-defined element because $2 \tilde \rho$  belongs to co-weight lattice of $\fg$.

Let $e$ be the principal nilpotent element of the reductive algebra $\fz_{\fg}(C)$. The differential of $\sigma_i(C)^{(r)}$ at the point $\exp(e) \in G((t^{-1/2}))$ is naturally a linear functional on the tangent space $T_{\exp(e)} G((t^{-1/2})) \simeq \fg((t^{-1/2}))$. Hereinafter we identify $T_{g(t)}  G((t^{-1/2}))$ with $\fg((t^{-1/2}))$ by the left $G((t^{-1/2}))$-action for any $g(t) \in G((t^{-1/2}))$.

Then we have
$$d_{\exp(t^{-1}e)} \sigma_i(C)^{(r)} =  d_{t^{\tilde \rho}\cdot \exp(e)t^{-\tilde\rho}} \sigma_i(C)^{(r)} = (\Ad t^{\tilde \rho}) d_{\exp(e)} \sigma_i(C)^{(r)}.$$

The last equality follows from the invariance of $\sigma_i^{(r)}(C)$ under conjugation by $t^{\tilde \rho}$.

We are now consider the restriction of differentials to $T_E G_1[[t^{-1}]] \simeq t^{-1} \fg[[t^{-1}]]$.
Let $\chi_{\omega_i}$ be characters of $\tilde G$-modules $V_{\omega_i}, i = 1, \ldots, \rk \fg$.


\begin{lem}
\label{gr2}
$d_{\exp(e)} \, \sigma_i(C)^{(r)} (xt^{-s}) = \delta_{r,s} d_{C^{-1}  \exp(e)} \chi_{\omega_i} (x)$ for any $x\in\fg$. 
\end{lem}
\begin{proof}
We have $$d_{\exp(e)} \sigma_i(C)^{(r)}=\sum\limits_{v\in\Lambda_i} d_{\exp(e)} \Delta_{v,v^{*}}^{(r)}(C) (xt^{-s}) = \delta_{r,s} \tr_{V_{\omega_i}} \rho_i(C) \rho_i(x)=\delta_{r,s} d_{C^{-1}  \exp(e)} \chi_{\omega_i} (x)$$ for any $x\in\fg$. 
\end{proof}

Note that $C^{-1} \cdot \exp(e)$ is a regular element of $\tilde G$. 
As in \cite{ir2} the key point here is the fact that differentials of characters of fundamental representations at regular point are linearly independent (see \cite[Theorem 3, p.119]{steinberg}).

\begin{lem}
\label{lem1}
$ \spann \left<d_{C^{-1} \exp(e)} \chi_{\omega_i}\right> = \fz_{\fz_{\fg}(C)}(e)$ under the identification $\fg^* \simeq \fg$.
\end{lem}
\begin{proof}
Note that $\fz_{\fg}(C \exp(e)) = \fz_{\fz_{\fg}(C)}(e)$.
It is sufficient to show now that $\spann \left<d_{C^{-1} \exp(e)} \chi_{\omega_i}\right> = \fz_{\fg}(C \exp(e))$. It is obvious that $\spann \left<d_{C^{-1} \exp(e)} \chi_{\omega_i}\right> \subset \fz_{\fg}(C \exp(e))$ and dimensions coincide according to linear independence of differentials at regular point.
\end{proof}


Under the correspondence from previous Lemma  one can express eigenvector $v_j$ of $\tilde\rho$ with eigenvalue $m_j$ as a linear combination of $d_{C^{-1} \exp(e)} \chi_{\omega_i}$. Let $\sigma_{v_j}(C)^{(r)}$ be the corresponding linear combination of $\sigma_i(C)^{(r)}, i = 1, \ldots, \rk \fg$.

\begin{lem}
$(\Ad t^{\tilde \rho}) d_{\exp(e)} \, \sigma_{v_j}(C)^{(r)} (xt^{-s}) = \delta_{r,s - m_j}  \left<v_j, x \right>$ for any $x\in\fg$.
\end{lem}
\begin{proof}
It follows from Lemma \ref{gr2} and the fact that 
$\sigma_{v_j}^{(r)}(C)$ is an eigenvector of $t^{\tilde\rho}$ with eigenvalue $t^{m_j}$.
\end{proof}

From the last Lemma the statement of Proposition follows.
\end{proof}

\begin{rem}
We will also give another proof of Proposition \ref{size} in Section~5, see Proposition \ref{proof2}.
\end{rem}


\section{Universal Gaudin subalgebra }
\subsection{Commutative subalgebra from the center on critical level.} We regard the Lie
algebra $\fg[t]$ as a ``half'' of
the corresponding affine Kac--Moody algebra $\hat\fg$ which is a central extension of the loop Lie algebra $\fg((t^{-1}))$. According to Feigin and Frenkel \cite{ff}, the local completion of the universal enveloping algebra $U(\hat{\fg})$ on the \emph{critical} level $k=-h^\vee$ has a huge center $Z$. The image of natural homomorphism from $Z$ to the quantum Hamiltonian reduction $(U(\hat{\fg})/U(\hat{\fg})t^{-1}\fg[t^{-1}])^{t^{-1}\fg[t^{-1}]}$ is a commutative subalgebra there. The latter naturally embeds into $U(\fg[t])$, so the image of $Z$ can be regarded as a commutative subalgebra $\mathcal{A}_{\fg}\subset U(\fg[t])$, which we call  the {\it universal Gaudin subalgebra} of $U(\fg[t])$. 

Though there are no explicit formulas for for the generators of $\mathcal{A}_{\fg}$ in general, one can describe explicitly the associated graded subalgebra $A_{\fg}\subset S(\fg[t])=\mathcal{O}(t^{-1}\fg[[t^{-1}]])$. Namely, $A_{\fg}$ is freely generated by all Fourier components of $\mathbb{C}[[t^{-1}]]$-valued functions $\Phi_l(x(t))$ for all generators $\Phi_l$ of the algebra of adjoint invariants $S(\fg)^\fg$. The subalgebra $A_{\fg}\subset S(\fg[t])$ can be obtained via Magri-Lenard scheme (\cite{ma}) from a pair of compatible Poisson brackets on $S(\fg[[t]])$ (see next subsection).

\subsection{Two Poisson brackets on $S(\fg[[t]])$.}

Let $\fg[[t]]$ be a Lie algebra of formal power series with coefficients in $\fg$.
Consider two Poisson brackets on $S(\fg[[t]])$:
$$\{x[n],x[m]\}_0 = [x,y][n+m];$$
$$\{x[n],y[m]\}_1 = [x,y][n+m+1],$$
for any $x,y \in \fg$.
Note that bracket $\{,\}_0$ is bracket we obtain on $\gr_{21} Y_V(\fg)$ if we restrict it to $S(\fg[t])$. Note also that $\fg[t]$ with $\{\cdot, \cdot\}_1$ is isomorphic to $t \cdot \fg[t]$ as a Lie algebra.

We call a pair of Poisson brackets on $S(\fg[[t]])$ {\it compatible} if every linear combination of them is a also a Poisson bracket. The following Lemma is well-known.

\begin{lem} \begin{enumerate} \item Poisson brackets $\{\cdot , \cdot\}_0$ and $\{\cdot , \cdot\}_1$ are compatible. \item Every linear combination of these brackets restricts to $S(\fg[[t]])^{\fg}$ (i.e. the bracket of $\fg$-invariant elements is $\fg$-invariant).
\end{enumerate}
\end{lem}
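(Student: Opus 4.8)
The plan is to prove both assertions by direct computation on the Lie algebra generators $x[n]$ of $S(\fg[[t]])$, since the Poisson bracket on a symmetric algebra $S(\fl)$ is completely determined by its restriction to $\fl$ (it is the Kirillov--Kostant bracket extended by the Leibniz rule, which automatically satisfies the Jacobi identity). So for part (1), I would not verify the Jacobi identity abstractly; instead I would note that each of $\{\cdot,\cdot\}_0$ and $\{\cdot,\cdot\}_1$ arises from a Lie bracket on the vector space $\fg[[t]]$ (namely the standard bracket and the ``shifted'' bracket $[x[n],y[m]]'=[x,y][n+m+1]$), and that any linear combination $\{\cdot,\cdot\}_\lambda=\{\cdot,\cdot\}_0+\lambda\{\cdot,\cdot\}_1$ likewise comes from the linear-combination bracket $[x[n],y[m]]_\lambda=[x,y][n+m]+\lambda[x,y][n+m+1]$ on $\fg[[t]]$. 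Thus it suffices to check that $[\cdot,\cdot]_\lambda$ is a genuine Lie bracket, i.e. antisymmetric (clear) and Jacobi. Antisymmetry follows from antisymmetry of $[\cdot,\cdot]$ in $\fg$. For Jacobi, I would substitute three generators $x[n],y[m],z[k]$ and observe that the resulting expression is a $\bc[\lambda]$-linear combination of terms $[x,[y,z]][n+m+k+j]$ (for $j=0,1,2$) whose coefficients vanish by the Jacobi identity in $\fg$ applied degree-by-degree in $\lambda$.

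For part (2), I would argue that for each fixed $\lambda$ the bracket $\{\cdot,\cdot\}_\lambda$ is the Kirillov--Kostant bracket associated to the Lie algebra structure $[\cdot,\cdot]_\lambda$ on $\fg[[t]]$, and that the diagonal $\fg$-action (embedding $\fg\hookrightarrow\fg[[t]]$ as the constant series $x\mapsto x[0]$) acts by Lie algebra derivations of $[\cdot,\cdot]_\lambda$. Indeed $[x[0],y[m]]_\lambda=[x,y][m]+\lambda[x,y][m+1]$, which is exactly the infinitesimal adjoint action of $\fg$ rescaling the degree-$m$ and degree-$(m+1)$ components, so $\fg$ preserves the bracket. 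Since the $\fg$-action is by Poisson derivations, the Poisson bracket of two $\fg$-invariant elements is again $\fg$-invariant: for $a,b\in S(\fg[[t]])^\fg$ and any $\xi\in\fg$ one has $\xi\cdot\{a,b\}_\lambda=\{\xi\cdot a,b\}_\lambda+\{a,\xi\cdot b\}_\lambda=0$. This gives the restriction to $S(\fg[[t]])^\fg$ for every member of the pencil simultaneously.

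The main obstacle, if any, is purely bookkeeping: verifying Jacobi for $[\cdot,\cdot]_\lambda$ requires tracking the degree shifts carefully, since the $\lambda^0$, $\lambda^1$, and $\lambda^2$ parts land in different graded components ($[x,y][n+m]$, $[x,y][n+m+1]$, and $[x,y][n+m+2]$ respectively). I would organize this by noting that $[\cdot,\cdot]_\lambda$ is nothing but the bracket on $\fg[[t]]\otimes_{\bc}\bc[[t]]$ transported by the algebra automorphism $t\mapsto t(1+\lambda t)^{-1}$ (equivalently, $\fg[[t]]$ with $t$ replaced by a formal parameter $s$ and the second bracket being multiplication by $s$), so that compatibility becomes manifest without term-by-term computation. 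The invariance statement in part (2) is then immediate from the observation that the embedding $\fg\hookrightarrow\fg[[t]]$ as constants is a Lie subalgebra for every $[\cdot,\cdot]_\lambda$ and acts by inner derivations. The only genuinely careful point is to confirm that ``$\fg$-invariant'' in the statement refers to this diagonal constant-loop action, which is consistent with the usage in the universal Gaudin subalgebra $A_\fg\subset S(\fg[t])^\fg$ discussed just above.
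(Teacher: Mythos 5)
The paper does not actually prove this lemma (it is stated as ``well-known''), so your argument has to be judged on its own merits. Your part (1) is correct and complete: the pencil $[x[n],y[m]]_\lambda=[x,y][n+m]+\lambda[x,y][n+m+1]$ consists of Lie brackets on $\fg[[t]]$ (each $\lambda$-homogeneous component of the Jacobiator is a cyclic sum of terms $[x,[y,z]][n+m+k+j]$, $j=0,1,2$, vanishing by Jacobi in $\fg$), any Lie bracket on generators extends by Leibniz to a Poisson bracket on the symmetric algebra, and the missing case $u=0$ is covered because $\{\cdot,\cdot\}_1$ itself comes from a Lie bracket. However, your ``bookkeeping-free'' reformulation is wrong: the substitution $t\mapsto t(1+\lambda t)^{-1}$ is an algebra automorphism of $\bc[[t]]$ and hence induces an automorphism of the loop algebra \emph{preserving} the standard bracket; it does not produce $[\cdot,\cdot]_\lambda$. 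The correct transport is the module map ``multiplication by $(1+\lambda t)$'', which intertwines $[\cdot,\cdot]_\lambda$ with the standard bracket (this is precisely the inverse of the map $\varphi_{1,v}$ that the paper introduces in the next subsection). Since this remark only duplicates your already complete computation, part (1) stands.

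Part (2) as written has a genuine error. The invariants $S(\fg[[t]])^{\fg}$ are taken with respect to the standard adjoint action $\xi\cdot y[m]=[\xi,y][m]$, but you justify invariance by claiming that the constants $\fg\hookrightarrow\fg[[t]]$, $x\mapsto x[0]$, form a Lie subalgebra for $[\cdot,\cdot]_\lambda$ acting by inner derivations. Both claims are false for $\lambda\neq0$: one has $[x[0],y[0]]_\lambda=[x,y][0]+\lambda[x,y][1]\notin\fg[0]$, so the constants are not a subalgebra, and $[x[0],y[m]]_\lambda=[x,y][m]+\lambda[x,y][m+1]$ is \emph{not} the standard action; indeed $\{x[0],\cdot\}_\lambda$ does not even annihilate standard invariants (e.g.\ $\{x[0],\omega_\fg\}_1=2\sum_a[x,x_a][1]\,x_a[0]\neq0$ in general). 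So your argument, read literally, proves invariance for the wrong notion of invariants. The conclusion is nevertheless true and the repair is one line: the standard $\fg$-action is by derivations of every bracket in the pencil, since $\xi\cdot\{y[m],z[k]\}_j=\{\xi\cdot y[m],z[k]\}_j+\{y[m],\xi\cdot z[k]\}_j$ for $j=0,1$ is exactly the identity $[\xi,[y,z]]=[[\xi,y],z]+[y,[\xi,z]]$ in $\fg$ (the degree shift in $\{\cdot,\cdot\}_1$ commutes with the action); alternatively, the standard action is Hamiltonian for $\{\cdot,\cdot\}_\lambda$ with Hamiltonian $x\otimes(1+\lambda t)^{-1}\in\fg[[t]]$, not $x[0]$. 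With that substitution, your final computation $\xi\cdot\{a,b\}_\lambda=\{\xi\cdot a,b\}_\lambda+\{a,\xi\cdot b\}_\lambda=0$ for invariant $a,b$ goes through.
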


By $S(\fg[[t]])_{u,v}$ we denote a Poisson algebra $S(\fg[[t]])$ with Poisson bracket $u \{, \}_0+ v\{,\}_1$.
From the pair of compatible Poisson brackets one can obtain a Poisson commutative subalgebra of $S(\fg[[t]])^{\fg}$ with respect to $\{,\}_0$ and $\{,\}_1$ at the same time, see e.g. \cite{r1}. 
The construction is as follows:
subalgebra is generated by all centers of $S(\fg[[t]])_{u,v}^{\fg}$ for 
$u, v \in \bc$ except the case $u = 0, v = 1$.


\subsection{Universal Gaudin subalgebra $A_{\fg}$.}
Consider the derivation $D$ of $S(\fg[t])^{\fg}$: 
$$D (x[n]) = (n+1) x[n+1].$$

Let $\Phi_i, i=1, \ldots, \rk \fg$ be free generators of $S(\fg[0])^{\fg}$. 

\begin{defn}
Universal Gaudin subalgebra $A_{\fg}$ is the subalgebra generated by all $D^k \Phi_i, k \geq 0, i = 1, \ldots, \rk \fg$. 
\end{defn}

\begin{prop}
Subalgebra $A_{\fg}$ is commutative and elements  $D^k \Phi_i, k \geq 0, i = 1, \ldots, \rk \fg$ are free generators of $A_{\fg}$.
\end{prop}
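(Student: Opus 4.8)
The plan is to prove two things: that $A_{\fg}$ is Poisson commutative with respect to the Kirillov--Kostant bracket on $S(\fg[t])$, and that the displayed elements $D^k\Phi_i$ are algebraically independent. For commutativity, I would connect the derivation $D$ to the bi-Hamiltonian structure of the previous subsection. Observe that for $\fg$-invariant $\Phi\in S(\fg[t])^{\fg}$ one has the Lenard--Magri-type relation $\{D\,f,g\}_0 = \{f,g\}_1$ on invariants (up to the shift in $t$), so that $D$ intertwines the two compatible brackets $\{\cdot,\cdot\}_0$ and $\{\cdot,\cdot\}_1$. The key observation is that each $\Phi_i\in S(\fg[0])^{\fg}$ is a Casimir for $\{\cdot,\cdot\}_0$ (it lies in the degree-zero part, where the bracket is the ordinary Kirillov--Kostant bracket of $\fg$ and $\Phi_i$ is $\fg$-invariant) and also, since $\Phi_i$ involves only $x[0]$'s while $\{x[0],y[0]\}_1=[x,y][1]$ and $\Phi_i$ is $\fg$-invariant, a Casimir for $\{\cdot,\cdot\}_1$ as well. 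Applying $D$ repeatedly and using the intertwining property $D\{f,g\}_0=\{Df,g\}_0+\{f,Dg\}_0$ together with $\{Df,g\}_0=\{f,g\}_1$, one shows inductively that all the $D^k\Phi_i$ Poisson commute with each other under $\{\cdot,\cdot\}_0$; this is the standard argument that the subalgebra generated by the Casimirs of a pencil is Poisson commutative.

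More precisely, I would first establish the single identity $\{Df,g\}_0=\{f,g\}_1$ for $f,g\in S(\fg[[t]])^{\fg}$, which follows directly from the explicit formulas $\{x[n],y[m]\}_0=[x,y][n+m]$ and $\{x[n],y[m]\}_1=[x,y][n+m+1]$ together with $D(x[n])=(n+1)x[n+1]$ by checking on generators and using the Leibniz rule. From this and the fact that each $\Phi_i$ is a Casimir for the whole pencil, the commutativity $\{D^k\Phi_i,D^l\Phi_j\}_0=0$ follows by the Lenard--Magri recursion: one has $\{D^k\Phi_i,\Phi_j\}_1=\{D^{k+1}\Phi_i,\Phi_j\}_0$ telescoping down to a Casimir, and a bootstrapping argument moves the derivations around until both Casimir conditions kill the bracket. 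Then since the $D^k\Phi_i$ generate $A_{\fg}$ as an algebra and Poisson-commute pairwise, $A_{\fg}$ is commutative.

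For the algebraic independence of the generators, I would pass to a leading-term or degree argument. Equip $S(\fg[t])$ with the grading $\deg x[r]=r$ coming from the $\bc^*$-action dilating $t$; then $D$ raises this degree by one, and the number of generators of each degree matches a free polynomial count. The cleanest route is to note that $A_{\fg}$ is, by the construction recalled in the text (coming from the center of $\hat\fg$ at the critical level, or from the Magri--Lenard scheme), already known to be freely generated — but since the Proposition is stated as a self-contained claim I would instead argue directly: the symbols of $D^k\Phi_i$ in a suitable associated graded (for instance with respect to the filtration by polynomial degree in $\fg[t]$) are precisely the classical Gaudin generators $A_{\fg}^{\mathrm{cl}}$, whose algebraic independence is classical (it follows from the fact that $\Phi_1,\dots,\Phi_{\rk\fg}$ are free generators of $S(\fg)^{\fg}$ and that $D$ produces new independent Fourier components at each step, which can be verified by evaluating differentials at a regular semisimple point of $\fg[t]^*$). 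Counting: the generators of degree $r$ are $D^{r-m_i}\Phi_i$ for those $i$ with $m_i\le r$, matching the expected Poincaré series of a free polynomial algebra.

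The main obstacle I anticipate is the algebraic independence rather than the commutativity: commutativity is a direct Lenard--Magri computation once the identity $\{Df,g\}_0=\{f,g\}_1$ is in hand, but establishing that no polynomial relation holds among the infinitely many $D^k\Phi_i$ requires either invoking the known freeness of the classical Gaudin algebra $A_{\fg}$ (via a genericity/regularity argument on $\Spec S(\fg[t])=t^{-1}\fg[[t^{-1}]]$, computing the rank of the differentials of the generators at a generic point) or a careful leading-symbol analysis. I would reduce to showing the differentials $d(D^k\Phi_i)$ are linearly independent at a generic point, which by the derivation property of $D$ reduces to the independence of the differentials of $\Phi_i$ and their iterated $D$-images — ultimately the statement that the generators of $S(\fg)^{\fg}$ have independent differentials at a regular element, combined with the injectivity of the relevant $D$-iteration on the level of leading terms.
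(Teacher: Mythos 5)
Your overall strategy (a Lenard--Magri/bi-Hamiltonian argument for commutativity, plus a separate independence argument) is the same in spirit as the paper's, but two of your central claims are false as stated, and they are exactly the points where the real content lies. First, the identity $\{Df,g\}_0=\{f,g\}_1$ cannot be ``checked on generators'': one has $\{Dx[n],y[m]\}_0=(n+1)[x,y][n+m+1]$ while $\{x[n],y[m]\}_1=[x,y][n+m+1]$, so it fails by the factor $(n+1)$; moreover $(f,g)\mapsto\{Df,g\}_0$ is not a biderivation in $f$, so the Leibniz rule does not propagate any generator computation, and the generators $x[n]$ are not invariant anyway. The identity that does hold universally (both sides are biderivations, so generators suffice) is
$$\{f,g\}_1=\{Df,g\}_0+\{f,Dg\}_0-D\{f,g\}_0,$$
and your identity follows from it only for those invariant $f$ that already $\{\cdot,\cdot\}_0$-commute with all invariants --- which is precisely the nontrivial Casimir statement you would need to prove first. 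Second, the Casimir claims themselves are wrong or unjustified: $\Phi_i$ is \emph{not} a Casimir of $\{\cdot,\cdot\}_1$. Already for $\fg=\fsl_2$ and the quadratic invariant $\Phi=2e[0]f[0]+\frac{1}{2}h[0]^2$ one computes $\{e[0],\Phi\}_1=2e[0]h[1]-2h[0]e[1]\neq0$. The reason is that invariance gives the vanishing of $\sum_a\frac{\partial\Phi}{\partial x_a}[x,x_a]$ as an element of $S(\fg)$, whereas vanishing of $\{x[0],\Phi\}_1$ (or of $\{x[n],\Phi\}_0$ for $n>0$) would require the vanishing of the tensor $\sum_a\frac{\partial\Phi}{\partial x_a}\otimes[x,x_a]$ in $S(\fg)\otimes\fg$, which fails. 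For the same reason your justification of the $\{\cdot,\cdot\}_0$-Casimir property (``degree-zero part, ordinary Kirillov--Kostant bracket'') only shows that $\Phi_i$ commutes with $S(\fg[0])$; the true statement, which requires a genuine argument, is that $\Phi_i$ is central in the \emph{invariant} subalgebra $S(\fg[[t]])_{1,0}^{\fg}$.

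The paper's proof repairs exactly these gaps: it exhibits the explicit Poisson algebra isomorphism $\varphi_{1,v}\colon S(\fg[[t]])_{1,0}\to S(\fg[[t]])_{1,v}$, $x[m]\mapsto\sum_{k\ge0}(-v)^kx[m+k]$ (with inverse $x[m]\mapsto x[m]+vx[m+1]$), observes that $\varphi_{1,v}=\exp(-vD)$ on $S(\fg[0])$, and transports the centrality of $\Phi_i$ in $S(\fg[[t]])_{1,0}^{\fg}$ to centrality of $\exp(-vD)\Phi_i$ in $S(\fg[[t]])_{1,v}^{\fg}$; expanding in powers of $v$ then shows that all $D^k\Phi_i$ commute with respect to every bracket of the pencil. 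In other words, it is not $\Phi_i$ but its image under $\exp(-vD)$ that is a Casimir of the $v$-th bracket --- this is the step your recursion tries to shortcut and cannot. Your treatment of algebraic independence is acceptable as far as it goes: the paper itself simply cites \cite{bd} and \cite{f} for the freeness, which is one of the two options you propose. But the commutativity argument as written does not go through without the corrections above.
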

\begin{proof}

It is easy to check that the map 

$$\varphi_{1,v}: S(\fg[[t]])_{1,0} \to S(\fg[[t]])_{1,v};$$ $$x[m] \mapsto x[m] + \sum_{k=1}^{\infty} (-1)^k v^k x[m+k], $$
for all $x \in \fg, m \geq 0$, is an isomorphism of Poisson algebras. Indeed, the inverse map is 
$$x[m] \mapsto x[m] + v x[m+1],$$
for all $x \in \fg, m \geq 0$.
One can restrict $\varphi_{1,v}$ to $S(\fg[[t]])_{1,0}^{\fg}$ to obtain the isomorphism $S(\fg[[t]])_{1,0}^{\fg} \simeq S(\fg[[t]])_{1,v}^{\fg}$.

If $\Phi \in S(\fg[0])^{\fg}$ then it is central in $S(\fg[[t]])_{1,0}^{\fg}$ therefore $\varphi_{1,v}(\Phi)$ is central in $S(\fg[[t]])_{1,v}^{\fg}$. It implies that the elements of the form $\varphi_{1,v}(\Phi)$, $\Phi \in S(\fg[0])^{\fg}, v \in \bc$ commutes with respect to any bracket $u \{ \cdot, \cdot \}_0 + v \{ \cdot, \cdot \}_1$.
This implies that the coefficients of degrees of $v$ commute with respect to any bracket  $u \{ \cdot, \cdot \}_0 + v \{ \cdot, \cdot \}_1$, in particularly $\{\cdot, \cdot \}_0$ and $\{\cdot, \cdot \}_1$. Note that coefficients belongs to $S(\fg[t])^{\fg}$, hence these coefficients generate some commutative subalgebra of $S(\fg[t])^{\fg}$.

It is easy to see that for any $f \in S(\fg[0])$
$$\varphi_{1,v}(f) = \exp(-v D) f.$$

Therefore the coefficient of $v^k$ of $\varphi_{1,v}(\Phi)$ is proportional to $D^k \Phi$. This means that our subalgebra coincides with $A_{\fg}$.

The statement that elements of the form $D^k \Phi_i$ are free generators of $A_{\fg}$ is well-known, see e.g. \cite[\S 2.4.1]{bd}, \cite[Proposition 9.3]{f}.
\end{proof}

\subsection{Properties of subalgebra $A_{\fg}$}
By definition put
$$\omega_{\fg} = \sum_a x_a[0]^2 \in S(\fg[0])^{\fg},
$$
$$\Omega_{\fg} = \sum_{a} x_{a}[0] x_{a}[1] \in S(\fg[t])^{\fg},$$
where $\{x_a\}, a= 1, \ldots, \dim \fg$ is an orthonormal basis of $\fg$ with respect to $\langle\cdot,\cdot\rangle$.

Note that $\omega_{\fg} \in A_{\fg}$ by construction.
Also $ D \omega_{\fg} = 2 \Omega_{\fg}$ therefore $\Omega_{\fg} \in A_{\fg}$ too.

\begin{prop} (\cite{r1})
\label{centr2}
Subalgebra $A_{\fg}$ is the centralizer of $\omega_{\fg}$ in $S(\fg[t])$ with respect to $\{, \}_1$.
\end{prop}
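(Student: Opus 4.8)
The plan is to show the two subalgebras coincide by a double inclusion, using that $A_{\fg}$ is a \emph{maximal} Poisson-commutative subalgebra with respect to $\{\cdot,\cdot\}_1$. First I would check the easy inclusion: every generator $D^k\Phi_i$ of $A_{\fg}$ Poisson-commutes with $\omega_{\fg}$ under $\{\cdot,\cdot\}_1$. Since $\omega_{\fg}=\sum_a x_a[0]^2\in S(\fg[0])^\fg$ is, up to scalar, the quadratic Casimir placed at level $0$, and since $A_{\fg}$ is by the previous Proposition a commutative subalgebra with respect to \emph{both} brackets $\{\cdot,\cdot\}_0$ and $\{\cdot,\cdot\}_1$ simultaneously, and $\omega_{\fg}\in A_{\fg}$, it follows that $A_{\fg}$ is contained in the $\{\cdot,\cdot\}_1$-centralizer of $\omega_{\fg}$. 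This gives $A_{\fg}\subseteq Z_{\{\cdot,\cdot\}_1}(\omega_{\fg})$.

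The harder inclusion is the reverse: any element Poisson-commuting with $\omega_{\fg}$ under $\{\cdot,\cdot\}_1$ already lies in $A_{\fg}$. Here I would exploit the derivation $D$, which satisfies $D(x[n])=(n+1)x[n+1]$ and relates the two brackets via the isomorphism $\varphi_{1,v}=\exp(-vD)$ appearing in the proof of the preceding Proposition. The key observation is that $\{\omega_{\fg},\cdot\}_1$ can be rewritten, after transporting along $\exp(vD)$, as the operator $D$ acting on the $\{\cdot,\cdot\}_0$-centralizer, so that being in the kernel of $\{\omega_{\fg},\cdot\}_1$ translates into a statement about $D$-stability of centralizers of the quadratic Casimir at level zero. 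Concretely, since $\omega_{\fg}$ is central for $\{\cdot,\cdot\}_0$, its $\{\cdot,\cdot\}_1$-bracket with $x[n]$ is computed from $\{x_a[0]^2,x[n]\}_1=2\sum_a x_a[0][x_a,x][n+1]$, which expresses $\{\omega_{\fg},\cdot\}_1$ in terms of the Casimir-type operator $\sum_a x_a[0]\ad_{x_a}$ shifting the level by one; analyzing its kernel on $S(\fg[t])$ reduces the problem to the level-zero algebra $S(\fg)$, where the kernel of $\sum_a x_a\ad_{x_a}$ acting by the Kirillov--Kostant structure is exactly the invariant subalgebra $S(\fg)^\fg$.

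The main obstacle I anticipate is controlling the \emph{graded size} of the centralizer $Z_{\{\cdot,\cdot\}_1}(\omega_{\fg})$ to match that of $A_{\fg}$, rather than merely establishing one containment. The cleanest route is a dimension count component by component: one shows that in each bigraded (or graded) piece the centralizer of $\omega_{\fg}$ has the same dimension as the corresponding piece of $A_{\fg}$, so that the inclusion $A_{\fg}\subseteq Z_{\{\cdot,\cdot\}_1}(\omega_{\fg})$ forces equality. Because the generators $D^k\Phi_i$ are algebraically independent (established in the previous Proposition), the Poincar\'e series of $A_{\fg}$ is explicitly $\prod_{i}\prod_{k\ge 0}(1-q^{\deg D^k\Phi_i})^{-1}$, and the task is to verify the centralizer is no larger. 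This amounts to proving that the operator $\{\omega_{\fg},\cdot\}_1$ has no kernel beyond $A_{\fg}$ in each degree, which is where the reference to \cite{r1} is essential; I would invoke the theory of compatible Poisson brackets and the argm-shift (Mishchenko--Fomenko) completeness results there, combined with the $\exp(-vD)$-transport, to conclude that the centralizer is generated by the $D^k\Phi_i$ and hence equals $A_{\fg}$.
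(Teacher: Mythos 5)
Your first inclusion is fine: since $A_{\fg}$ is Poisson-commutative with respect to \emph{both} brackets and $\omega_{\fg}\in A_{\fg}$, certainly $A_{\fg}$ lies in the $\{\cdot,\cdot\}_1$-centralizer of $\omega_{\fg}$. But the reverse inclusion is the entire content of the proposition, and your sketch of it rests on two false intermediate claims and ends in a circular appeal. First, $\omega_{\fg}$ is \emph{not} central for $\{\cdot,\cdot\}_0$ in all of $S(\fg[t])$: one has $\{\omega_{\fg},x[n]\}_0=2\sum_a x_a[0]\,[x_a,x][n]$, which vanishes for $n=0$ (by invariance of the Casimir tensor inside the symmetric algebra) but not for $n>0$, where the two factors sit at different levels; $\omega_{\fg}$ is central only in the invariant subalgebra $S(\fg[t])^{\fg}$, which is how Proposition \ref{centr1} uses it. Second, and fatally for your "reduction to level zero": the operator $\sum_a x_a\ad_{x_a}$ on $S(\fg)$ is \emph{identically zero} — that is exactly the statement that the Casimir is Poisson-central for the Kirillov--Kostant bracket — so its kernel is all of $S(\fg)$, not $S(\fg)^{\fg}$. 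The operator $\{\omega_{\fg},\cdot\}_1$ is nonzero only because of the level shift $[n]\mapsto[n+1]$, so no information about its kernel survives a reduction that forgets that shift; had your reduction worked, it would in any case produce the wrong answer, since the centralizer is $A_{\fg}$, not anything resembling $S(\fg)^{\fg}$. Finally, the needed upper bound on the centralizer (your "dimension count") is never carried out: you defer it to \cite{r1}, but the statement you are proving \emph{is} the result of \cite{r1}, so this is circular.

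For context: the paper gives no proof of this proposition either — it imports it from \cite{r1} — but it does prove the generalization to $A_{\fz_{\fg}(C)}$ (Proposition \ref{centr}) by a mechanism that actually closes the gap, and which specializes to a proof here when $C=E$. The ingredients are: (i) the restriction maps $\pi$ (to the Kostant slice $f+\fz_{\fg}(e)$, extended to $\fg[t]$) and $\psi$ (to $\fh[t]$), giving that $A_{\fg}$ is algebraically closed in $S(\fg[t])$ and that $\psi(A_{\fg})\subset S(\fh[t])$ is an algebraic extension (Lemma \ref{514}); (ii) the one-parameter family of $\{\cdot,\cdot\}_1$-automorphisms $\varphi_s(x[m])=x[m]+s\delta_{0m}\langle h,x\rangle$, under which $h[0]\in\lim_{s\to\infty}\varphi_s(A_{\fg})$; and (iii) the fact that the $\{\cdot,\cdot\}_1$-centralizer of $h[0]$ is $S(\fh[t])$. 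A hypothetical centralizer element $a\notin A_{\fg}$ must be transcendental over $A_{\fg}$, may be normalized so that $\psi(a)=0$, and then a suitable rescaled limit $\lim_{s\to\infty}\varphi_s(a)/s^k$ lands in $S(\fh[t])$, contradicting $\psi(a)=0$. If you want a self-contained argument, this limit-plus-transcendence route is the one to follow; a naive graded dimension bound on the centralizer of a single quadratic element is not available.
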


\begin{prop}
\label{centr1}
Subalgebra $A_{\fg}$ is the centralizer of $\Omega_{\fg}$ in $S(\fg[t])^{\fg}$ with respect to $\{,\}_0$.
\end{prop}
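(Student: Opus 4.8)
The plan is to deduce this from the already-established Proposition~\ref{centr2}, which identifies $A_{\fg}$ with the centralizer of $\omega_{\fg}$ with respect to $\{\cdot,\cdot\}_1$. Since $A_{\fg}\subset S(\fg[t])^{\fg}$ is commutative with respect to \emph{both} brackets and contains $\Omega_{\fg}=\tfrac12 D\omega_{\fg}$, one inclusion is immediate: every $f\in A_{\fg}$ satisfies $\{\Omega_{\fg},f\}_0=0$. Thus it suffices to show that $A_{\fg}$ \emph{contains} the $\{\cdot,\cdot\}_0$-centralizer of $\Omega_{\fg}$ inside $S(\fg[t])^{\fg}$, and for this I will establish the single identity
\[
\{\omega_{\fg},f\}_1 = 2\,\{\Omega_{\fg},f\}_0\qquad\text{for all } f\in S(\fg[t])^{\fg}.
\]
Granting this, Proposition~\ref{centr2} finishes the argument: if $f\in S(\fg[t])^{\fg}$ and $\{\Omega_{\fg},f\}_0=0$, then $\{\omega_{\fg},f\}_1=0$, hence $f\in A_{\fg}$.

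The main tool is a compatibility relation between the derivation $D$ and the two brackets, namely
\[
D\{f,g\}_0 = \{Df,g\}_0 + \{f,Dg\}_0 - \{f,g\}_1 \qquad\text{for all } f,g\in S(\fg[t]).
\]
I would verify this on the generators $f=x[n]$, $g=y[m]$, where both sides equal $(n+m+1)[x,y][n+m+1]$, using $D(x[n])=(n+1)x[n+1]$ together with the defining formulas $\{x[n],y[m]\}_0=[x,y][n+m]$ and $\{x[n],y[m]\}_1=[x,y][n+m+1]$. To pass from generators to arbitrary elements I would check that the difference of the two sides respects the Leibniz rule in each argument, so that its vanishing propagates from generators to all monomials by a routine induction on degree.

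With this relation in hand, I substitute $\omega_{\fg}$ for the first argument and an arbitrary $f\in S(\fg[t])^{\fg}$ for the second, obtaining
\[
D\{\omega_{\fg},f\}_0 = \{D\omega_{\fg},f\}_0 + \{\omega_{\fg},Df\}_0 - \{\omega_{\fg},f\}_1 .
\]
Here I use two elementary facts. First, $\{\omega_{\fg},h\}_0=0$ for every $\fg$-invariant $h$, because $\{x_a[0],\cdot\}_0$ realizes the adjoint action of $x_a$ and therefore annihilates invariants; this kills the left-hand side. Second, $D$ commutes with the $\fg$-action (it acts on the $t$-grading while the $\fg$-action acts on the $\fg$-part), so $Df$ is again $\fg$-invariant and the term $\{\omega_{\fg},Df\}_0$ vanishes as well. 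Since $D\omega_{\fg}=2\Omega_{\fg}$, what remains is exactly $0 = 2\{\Omega_{\fg},f\}_0 - \{\omega_{\fg},f\}_1$, which is the desired identity.

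I expect the only genuinely delicate point to be the extension of the $D$–bracket compatibility relation from generators to all of $S(\fg[t])$; everything else follows directly from the definitions and from Proposition~\ref{centr2}. As a consistency check one may test the key identity on $f=\omega_{\fg}$ and $f=\Omega_{\fg}$, where both sides vanish because $A_{\fg}$ is commutative with respect to both brackets.
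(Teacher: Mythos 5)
Your proof is correct, and it reaches the conclusion through the same bottleneck as the paper --- the identity $\{\omega_{\fg},f\}_1 = 2\{\Omega_{\fg},f\}_0$ for $\fg$-invariant $f$, followed by an appeal to Proposition~\ref{centr2} --- but you derive that identity by a genuinely different, infinitesimal mechanism. The paper obtains it by expanding in $v$ the statement that $\varphi_{1,v}(\omega_{\fg})$ is central in $S(\fg[[t]])^{\fg}_{1,v}$: writing $\varphi_{1,v}(\omega_{\fg}) = \omega_{\fg} - 2v\,\Omega_{\fg} + O(v^2)$ and extracting the coefficient of $v$ in $\{\varphi_{1,v}(\omega_{\fg}),z\}_0 + v\{\varphi_{1,v}(\omega_{\fg}),z\}_1 = 0$ gives exactly your identity (the paper's displayed constants differ by a harmless factor; only the vanishing matters). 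You instead prove the compatibility relation $D\{f,g\}_0 = \{Df,g\}_0 + \{f,Dg\}_0 - \{f,g\}_1$, which is precisely the $v$-derivative at $v=0$ of the statement that $\varphi_{1,v}$ is a Poisson map onto the pencil bracket; you check it on generators and extend it by noting that the defect is a biderivation. That extension step is sound: in the defect the terms $(Df_1)\{f_2,g\}_0$ and $(Df_2)\{f_1,g\}_0$ cancel, so it genuinely satisfies Leibniz in each slot and vanishes identically once it vanishes on generators. What the paper's route buys is economy --- $\varphi_{1,v}$ and the centrality of $\varphi_{1,v}(\omega_{\fg})$ were already set up to construct $A_{\fg}$ and prove its commutativity, so no new computation is needed. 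What your route buys is self-containedness --- it avoids the completion $S(\fg[[t]])$ and the centrality claim entirely, at the cost of one routine generator check. Both arguments then use the same two facts for the easy inclusion ($\Omega_{\fg}\in A_{\fg}$ and the $\{\cdot,\cdot\}_0$-commutativity of $A_{\fg}$) and Proposition~\ref{centr2}, applied to invariant elements, for the reverse one.
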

\begin{proof}
Let us again consider the isomorphism
$$\varphi_{1,v}: S(\fg[[t]])_{1,0} \to S(\fg[[t]])_{1,v}.$$

Note that $$\varphi_{1,v}(\omega_{\fg}) = \omega_{\fg} + \Omega_{\fg} v + \ldots$$ and $\varphi_{1,v}(\omega_{\fg})$ belong to the center of $S(\fg[[t]])_{1,v}^{\fg}$. Then for any $z \in S(\fg[t])^{\fg} \subset S(\fg[[t]])^{\fg}$ we have 

$$\{\varphi_{1,v}(\omega_{\fg}), z\}_0 + v \{\varphi_{1,v}(\omega_{\fg}), z\}_1 = 0. $$

Considering the coefficient of $v$ we get 
$$ \{\Omega_{\fg}, z\}_0 + \{\omega_{\fg}, z\}_1 = 0.$$





Therefore the centralizer of $\omega_{\fg}$ in $S(\fg[t])^{\fg}$ with respect to $\{\cdot, \cdot\}_1$ coincides with the centralizer of $\Omega_{\fg}$ in $S(\fg[t])^{\fg}$ with respect to $\{\cdot, \cdot\}_0$.
\end{proof}
\begin{cor}
$A_{\fg}$ is a maximal commutative subalgebra of $S(\fg[t])_{1,0}^{\fg}$ and $S(\fg[t])_{0,1}^{\fg}$.
\end{cor}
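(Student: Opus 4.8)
The plan is to use the standard reformulation of maximality: a Poisson-commutative subalgebra $B$ of a Poisson algebra $P$ is \emph{maximal} commutative if and only if it coincides with its own centralizer, i.e.\ every element of $P$ that Poisson-commutes with all of $B$ already lies in $B$. Indeed, if $z$ commutes with $B$ then, by the Leibniz rule for the Poisson bracket, the subalgebra generated by $B$ together with $z$ is again commutative, so maximality forces $z\in B$; the converse is clear. Thus it suffices to compute the centralizer of $A_{\fg}$ with respect to each bracket and to check that it equals $A_{\fg}$.

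For the bracket $\{\cdot,\cdot\}_0$ I would argue as follows. Since $A_{\fg}$ is commutative (by the Proposition establishing that the $D^k\Phi_i$ generate a commutative subalgebra), we have $A_{\fg}\subseteq Z_0(A_{\fg})$, where $Z_0$ denotes the centralizer in $S(\fg[t])^{\fg}$ with respect to $\{\cdot,\cdot\}_0$. On the other hand $\Omega_{\fg}\in A_{\fg}$ (recall $\Omega_{\fg}=\tfrac12 D\omega_{\fg}$), so any element commuting with all of $A_{\fg}$ in particular commutes with $\Omega_{\fg}$; hence $Z_0(A_{\fg})\subseteq Z_0(\Omega_{\fg})$. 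By Proposition~\ref{centr1}, $Z_0(\Omega_{\fg})=A_{\fg}$. Combining the two inclusions gives $Z_0(A_{\fg})=A_{\fg}$, which is precisely maximality of $A_{\fg}$ in $S(\fg[t])^{\fg}_{1,0}$.

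The argument for $\{\cdot,\cdot\}_1$ is identical with $\omega_{\fg}$ in place of $\Omega_{\fg}$: we have $\omega_{\fg}\in A_{\fg}$ by construction, so the centralizer $Z_1(A_{\fg})$ is contained in $Z_1(\omega_{\fg})$, which equals $A_{\fg}$ by Proposition~\ref{centr2}. Together with $A_{\fg}\subseteq Z_1(A_{\fg})$ coming from commutativity, this yields $Z_1(A_{\fg})=A_{\fg}$, i.e.\ maximality in $S(\fg[t])^{\fg}_{0,1}$. In fact Proposition~\ref{centr2} computes the centralizer of $\omega_{\fg}$ in all of $S(\fg[t])$, so $A_{\fg}$ is even maximal commutative in $S(\fg[t])_{0,1}$, and a fortiori in its $\fg$-invariant part.

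I do not expect a genuine obstacle here, since the two centralizer computations in Propositions~\ref{centr1} and \ref{centr2} do all the substantive work. The only points requiring care are that the relevant single generator ($\Omega_{\fg}$, respectively $\omega_{\fg}$) indeed lies in $A_{\fg}$, and the monotonicity of centralizers $B_1\subseteq B_2\Rightarrow Z(B_2)\subseteq Z(B_1)$, which together squeeze $Z(A_{\fg})$ between $A_{\fg}$ and the single-element centralizer and thereby force equality.
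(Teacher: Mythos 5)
Your proof is correct and follows essentially the same route as the paper, which states this corollary without explicit proof precisely because it is the immediate consequence of Propositions~\ref{centr1} and~\ref{centr2}: a commutative subalgebra that is the centralizer of one of its own elements ($\Omega_{\fg}$ for $\{\cdot,\cdot\}_0$, $\omega_{\fg}$ for $\{\cdot,\cdot\}_1$) must be maximal commutative. Your additional observation that Proposition~\ref{centr2} even gives maximality in all of $S(\fg[t])_{0,1}$, not just its $\fg$-invariant part, is a correct (minor) strengthening.
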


\begin{prop}
\label{lift}
There exists no more than one lifting $\mathcal{A}_{\fg}$ of $A_{\fg}$ to $U(\fg[t])^{\fg}$.
\end{prop}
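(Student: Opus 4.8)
The plan is to establish the uniqueness of the lifting by comparing the Poincaré series of the centralizer of a putative lifting against that of the classical subalgebra $A_\fg$. First I would recall the general principle behind such uniqueness results (as in \cite{ryb06}): if $\mathcal{A}_\fg$ is any commutative subalgebra of $U(\fg[t])^\fg$ with $\gr \mathcal{A}_\fg = A_\fg$, then $\mathcal{A}_\fg$ is determined by $A_\fg$ provided $A_\fg$ is a \emph{maximal} Poisson-commutative subalgebra with respect to the relevant bracket. The preceding Corollary already gives us exactly this: $A_\fg$ is maximal Poisson-commutative in $S(\fg[t])^\fg$ with respect to $\{\cdot,\cdot\}_0$.

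The key mechanism I would exploit is that $\Omega_\fg \in A_\fg$ generates $A_\fg$ as its own centralizer (Proposition~\ref{centr1}). So suppose $\mathcal{A}_\fg$ and $\mathcal{A}_\fg'$ are two liftings of $A_\fg$ to $U(\fg[t])^\fg$. Each contains a lift $\widehat{\Omega}_\fg$ of $\Omega_\fg$; by the filtered structure, the centralizer of such a lift in $U(\fg[t])^\fg$ has associated graded contained in the centralizer of $\Omega_\fg$ in $S(\fg[t])^\fg$ with respect to $\{\cdot,\cdot\}_0$, which by Proposition~\ref{centr1} equals $A_\fg$. Conversely the centralizer contains the commutative lifting itself, whose associated graded is all of $A_\fg$. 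Hence the associated graded of the centralizer of $\widehat{\Omega}_\fg$ is squeezed to be exactly $A_\fg$, forcing the centralizer to coincide with the lifting $\mathcal{A}_\fg$. The main step is then to argue that a lift of $\Omega_\fg$ is unique: any two lifts differ by an element of lower filtration degree, and one shows inductively that the commutation constraints (the lift must commute with the whole subalgebra, or at least with itself) kill this ambiguity.

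The hard part, and the genuine obstacle, is converting the \emph{classical} maximality into a \emph{quantum} rigidity statement. Maximality of $A_\fg$ ensures there is no room to enlarge, but uniqueness of the lift requires ruling out genuinely different liftings with the same associated graded. The standard device is to show that $\mathcal{A}_\fg$ equals the centralizer in $U(\fg[t])^\fg$ of a single canonical quantized element $\widehat{\Omega}_\fg$ (the quantum quadratic Hamiltonian), which is itself canonically defined, and that this centralizer has associated graded no larger than $A_\fg$. This last inclusion $\gr(\text{centralizer}) \subseteq A_\fg$ is where maximality of $A_\fg$ is invoked: the associated graded of any element commuting with $\widehat{\Omega}_\fg$ must Poisson-commute with $\Omega_\fg$, hence lies in $A_\fg$. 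Combined with the reverse inclusion (the lifting is contained in its own centralizer), we get equality, and uniqueness follows because the centralizer of a canonically defined element is itself canonically defined. I would flag that the delicate point is to ensure $\widehat{\Omega}_\fg$ can be chosen independently of which lifting we start with, which is the crux that distinguishes mere existence from uniqueness.
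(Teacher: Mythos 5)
Your overall strategy is exactly the paper's: identify any lifting $\mathcal{A}_\fg$ with the centralizer in $U(\fg[t])^{\fg}$ of a quantization $\widehat{\Omega}_\fg$ of the quadratic element $\Omega_\fg$ (using Proposition~\ref{centr1} to squeeze the associated graded of that centralizer between $\gr\mathcal{A}_\fg = A_\fg$ and the Poisson centralizer of $\Omega_\fg$, which is again $A_\fg$), and then argue that this centralizer does not depend on which lifting one started from. That first half of your argument is correct and coincides with the paper's proof.

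The gap is in how you handle the crux that you yourself flag. You propose to show that a lift of $\Omega_\fg$ is \emph{literally} unique by an inductive argument from commutation constraints. This cannot work: if $\widehat{\Omega}_\fg$ is a lift lying in a commutative lifting $\mathcal{A}_\fg$, then so is $\widehat{\Omega}_\fg + c$ for any $c\in\bc$, and no commutation constraint can distinguish the two, since constants are central in $U(\fg[t])$. So literal uniqueness is false, and commutation is powerless against the lower-order ambiguity. The correct (and much simpler) resolution, which is what the paper uses, is degree counting plus invariance: two lifts of $\Omega_\fg$ inside $U(\fg[t])^{\fg}$ differ by a $\fg$-invariant element of PBW-filtration degree at most $1$, i.e.\ by an element of $\bc\oplus(\fg[t])^{\fg}$; since $\fg$ is simple, $(\fg[t])^{\fg}=0$, so the difference is a constant (and, if one allows rescaling of the symbol, a scalar multiple plus a constant). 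Centralizers are unchanged by adding constants or rescaling, so the centralizer of $\widehat{\Omega}_\fg$ --- and hence the lifting, which you correctly showed equals this centralizer --- is independent of all choices. No induction is needed: the point is precisely that the ambiguity which commutation cannot see is exactly the ambiguity to which centralizers are blind.
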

\begin{proof}
Up to scaling and additive constant there exists a unique lifting of $\Omega_{\fg}$ to $U(\fg[t])^{\fg}$.
Moreover, any lifting of subalgebra $A_{\fg}$ is the centralizer of the lifting of the element $\Omega_{\fg}$. But the centralizer does not depend on a constant therefore the lifting is unique. 
\end{proof}

\begin{rem}
\emph{We will assign to any $C \in T$ the subalgebra $A_{\fz_{\fg}(C)}\subset S(\fz_{\fg}(C))\subset S(\fg)$ and consider the elements $\omega_{\fz_{\fg}(C)}, \Omega_{\fz_{\fg}(C)}$ in it, i.e. consider the above objects for a reductive Lie algebra, not necessarily semisimple. All the statements and definitions of the present section remain the same for $\fz_{\fg}(C)$ with the following conventions: we take the restriction of $\langle \cdot, \cdot \rangle$ to $\fz_{\fg}(C)$ as the invariant scalar product on $\fz_{\fg}(C)$, $\rk \fz_{\fg}(C) = \rk \fg$, the exponents of $\fz_{\fg}(C)$ are the exponents of the semisimple algebra $[\fz_{\fg}(C), \fz_{\fg}(C)]$ plus additional $\rk \fg - \rk [\fz_{\fg}(C), \fz_{\fg}(C)]$ of zeros.}
\end{rem}








\section{Bethe subalgebras and universal Gaudin subalgebras}
Let $E \in G$ be the identity element.  We are going to prove Theorem A for $C = E$.
\begin{thm}
\label{result}
$\gr_2 B(E)$ is the universal Gaudin subalgebra, i.e. $\gr_2 B(E) = \mathcal{A}_{\fg}$.
\end{thm}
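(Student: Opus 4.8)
The plan is to identify $\gr_2 B(E)$ with $\mathcal{A}_\fg$ by combining the commutativity of $B(E)$ (which passes to the associated graded), the dimension count supplied by Proposition~\ref{size}, and the uniqueness-of-lifting statement of Proposition~\ref{lift}. First I would apply $\gr_2$ to the commutative subalgebra $B(E)\subset Y_V(\fg)$. Since $F_2$ descends the commutator to the Poisson bracket $\{\cdot,\cdot\}_0$ on $\gr_2 Y_V(\fg)=U(\fg[t])$, the subalgebra $\gr_2 B(E)$ is commutative, and moreover its symbol $\gr_0\gr_2 B(E)$ in $S(\fg[t])$ (taking the further associated graded with respect to the PBW filtration) is Poisson commutative with respect to the Kirillov--Kostant bracket $\{\cdot,\cdot\}_0$. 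Thus it suffices to pin down this Poisson-commutative subalgebra and then lift.

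Next I would compute the leading terms of the generators $\tau_i(u,E)$ explicitly. The generators are $\tau_i(u,E)=\tr_{V(\omega_i,0)} T^i(u)$, i.e. the $C=E$ specialization of the trace series; under $\gr_1$ these go to $\sigma_i(u,E)=\tr_{V_{\omega_i}}\rho_i(g)=\chi_{\omega_i}(g)$ by Proposition~\ref{locprop}. The key computation is to extract the $\gr_2$-leading terms of the Fourier coefficients $\sigma_i(E)^{(r)}$ using Lemma~\ref{le:Taylor}: writing the first nonzero Taylor term of the character $\chi_{\omega_i}$ at $E\in G$ (equivalently, the leading term of $\tr_{V_{\omega_i}}\rho_i(\exp x)$ in the coordinate $x\in\fg$) gives a free generator $\Phi_i\in S(\fg)^\fg$ of degree equal to the lowest nontrivial homogeneous piece, and Lemma~\ref{le:Taylor}(3) then yields $\gr_2\sigma_i(E)^{(r)}=\frac{1}{(r-k_i)!}D^{r-k_i}\Phi_i$ up to normalization. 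Since the $D^k\Phi_i$ are exactly the free generators of $A_\fg$ by the Proposition in Section~4.3, this shows that the symbols of the generators of $\gr_2 B(E)$ are precisely the generators of $A_\fg$, so $\gr_0\gr_2 B(E)\supseteq A_\fg$.

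To upgrade the containment of symbols to an equality $\gr_2 B(E)=\mathcal{A}_\fg$, I would use sizes. By Proposition~\ref{size}, $B(E)$ contains $\rk\fg$ algebraically independent series with degrees $m_i+1,m_i+2,\ldots$; these degrees match exactly the degrees (with respect to $F_1$, hence reflected in the bigrading) of the free generators $D^k\Phi_i$ of $A_\fg$, using that $\deg\Phi_i=m_i+1$. A Poincar\'e-series comparison via Lemma~\ref{le:Poincare} and the bigraded machinery of Section~2.3 then forces $\gr_0\gr_2 B(E)=A_\fg$ on the nose, so $\gr_2 B(E)$ is a commutative subalgebra of $U(\fg[t])^\fg$ whose symbol is $A_\fg$, i.e. it is a lifting of $A_\fg$. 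By Proposition~\ref{lift} such a lifting is unique and equals $\mathcal{A}_\fg$, completing the proof.

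The main obstacle I anticipate is the last step: extracting equality from the inclusion of symbols. Proposition~\ref{size} only gives a lower bound on the size of $B(E)$, so one must show that no \emph{extra} symbols appear in $\gr_2 B(E)$ beyond $A_\fg$. The clean way to control this is through the bigraded framework of Section~2.3 together with Proposition~\ref{subgr}, comparing $\gr_{12}$ and $\gr_{21}$ of the subspace $B(E)$; one needs that the symbols computed via $\gr_2$ then $\gr_1$ agree with those computed via $\gr_1$ then $\gr_2$, so that the degree count from Proposition~\ref{size} and the freeness of the $D^k\Phi_i$ exhaust all of $\gr_2 B(E)$. Verifying that the relevant associated bigraded map is injective on $B(E)$ (so that Proposition~\ref{subgr} applies and the two gradings genuinely coincide) is the delicate point, and I expect it to require the maximality/size input rather than a purely formal argument.
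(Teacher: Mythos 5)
Your overall skeleton (exhibit the generators of $A_\fg$ among symbols of elements of $B(E)$, then use size plus the uniqueness of the lifting, Proposition~\ref{lift}) is reasonable, but the central computation on which your lower bound rests is false. You claim that the first nonzero Taylor term of $\chi_{\omega_i}$ at $E$ is a free generator $\Phi_i\in S(\fg)^\fg$ of degree $m_i+1$, so that Lemma~\ref{le:Taylor}(3) turns the Fourier coefficients $\sigma_i(E)^{(r)}$ into the full set of generators $D^k\Phi_i$ of $A_\fg$. This is not so: for \emph{every} $i$ the linear Taylor term of $\chi_{\omega_i}(\exp x)-\dim V_{\omega_i}$ vanishes (since $\tr_{V_{\omega_i}}\rho_i(x)=0$ for simple $\fg$), and the quadratic term $\frac{1}{2}\tr_{V_{\omega_i}}\rho_i(x)^2$ is the trace form of $V_{\omega_i}$, a nonzero multiple of the Killing form. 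Hence the first nonzero Taylor term of every fundamental character is proportional to the \emph{same} invariant $\omega_\fg$, and your computation produces only the elements $D^k\omega_\fg$, which generate a proper subalgebra of $A_\fg$ as soon as $\rk\fg>1$. To reach the higher-degree generators $\Phi_i$ one needs central functions on $\tilde G$ whose low-order Taylor terms cancel, i.e.\ nontrivial linear combinations of products of the $\sigma_i(E)$'s; their existence is not formal (the symbol of a combination is not the combination of symbols when cancellation occurs), and in the paper it is exactly the content of Proposition~\ref{proof2}, proved via Kostant's theorem by showing surjectivity of the differential map $\Theta$ at $C^{-1}\exp(e)$. That argument is what the paper uses for general $C\in T$ (Theorem~\ref{result1}); for $C=E$ it is avoided altogether.

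Second, your step ``a Poincar\'e-series comparison forces equality on the nose'' is unjustified: Proposition~\ref{size} is only a lower bound, as you yourself note, and Proposition~\ref{subgr} cannot be invoked until one of the inclusions $\gr_{12}B(E)\subset\gr_{21}B(E)$ is known. The missing ingredient, which is the engine of the paper's proof, is Proposition~\ref{centr1}: $A_\fg$ is the \emph{centralizer} of the single element $\Omega_\fg$ in $S(\fg[t])^\fg$ with respect to $\{\cdot,\cdot\}_0$. The paper shows $\Omega_\fg\in\gr_{21}B(E)$ and $\Omega_\fg\in\gr_{12}B(E)$ by a short dimension count in degree $3$ (using Proposition~\ref{size} and Lemma~\ref{psi}); Poisson commutativity then immediately gives the upper bound $\gr_{21}B(E)\subset A_\fg$, Proposition~\ref{size} gives the matching lower bound on Poincar\'e series, hence $\gr_{21}B(E)=A_\fg$; the same centralizer property yields $\gr_{12}B(E)\subset\gr_{21}B(E)$, so Proposition~\ref{subgr} applies, and Proposition~\ref{lift} concludes. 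So your plan can be repaired, but only by adding one of exactly these two ingredients: the Kostant-type surjectivity (Proposition~\ref{proof2}) to make your lower bound correct, or, more economically, the centralizer characterization of $A_\fg$, which makes the computation of all generators unnecessary.
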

\begin{proof}

\begin{lem}
\label{bigr}

The element $\Omega_{\fg}$ belongs to $\gr_{21} B(E)$ and to $\gr_{12} B(E)$.
 \end{lem}
\begin{proof}
Firstly we consider $\gr_{21} B(E) \subset S(\fg[t])^{\fg}$. From Proposition \ref{size} we know that there are 2 algebraically independent elements of degree 3 for type $A$ and 1 element of degree 3 for other types in $\gr_{21} B(E)$. All degree 3 elements in $S(\fg[t])^{\fg}$ are from $S^3(\fg)^{\fg} +  (\fg \cdot t \fg)^{\fg}$. In type $A$  the spaces $S^3(\fg)^{\fg}$ and $(\fg \cdot t \fg)^{\fg}$ are $1$-dimensional. In other types we have $S^3(\fg)^{\fg} = 0$ and $\dim (\fg \cdot t \fg)^{\fg}=1$. So $\gr_{21} B(E)$ contains the spaces $S^3(\fg)^{\fg}$ and $(\fg \cdot t \fg)^{\fg}$.

Any element from $(\fg \cdot t \fg)^{\fg}$ has the form $c \cdot \Omega_{\fg}, c \in \bc^*$.
Finally, $\Omega_{\fg}$ is homogeneous with respect to the second grading hence $\Omega_{\fg}\in\gr_{21} B(E)$. 

We identify $\gr_{12} Y_V(\fg)$ and $\gr_{21} Y_V(\fg)$ with $S(\fg[t])$ then obtain the automorphism $\psi: S(\fg[t]) \to S(\fg[t])$ of Poisson algebra.
From Lemma \ref{psi} it follows that $\psi$ maps any graded (with respect to the first grading) vector subspace of $S(\fg[t])$ to itself.

We have $F_1^{(k)}Y_V(\fg)\cap F_2^{(l)}Y_V(\fg)=F_1^{(k)}Y_V(\fg)\cap  F_2^{(k-1)}Y_V(\fg)$ for $l\ge k$. Then any lifting of $\Omega_{\fg}$ to $Y_V(\fg)$ belongs to 
$$F_1^{(3)} \cap F_2^{(1)} + \sum_{k< 3, l < k } F_1^{(k)}  \cap F_2^{(l)} = F_1^{(3)} \cap F_2^{(1)}$$

Then for any lifting $\dbtilde \Omega_{\fg}$ we have $\gr_{12} \dbtilde \Omega_{\fg} = \Omega_{\fg}$.
\end{proof}

\begin{prop}
$\gr_{12} B(E) = A_{\fg}$.
\end{prop}
\begin{proof}
We know that $\Omega_{\fg} \in \gr_{21} B(E)$ and that $A_{\fg}$ is the centralizer of $\Omega_{\fg}$. Moreover $\gr_{21} B(E) \subset S(\fg[t])^{\fg}$ thus $\gr_{21} B(E) \subset A_{\fg}$. 

From Proposition \ref{size} and the definition of $A_{\fg}$ we see that Poincar\'e series of $\gr_{21} B(E)$ and of $A_{\fg}$ coincide. Hence we have $\gr_{21} B(E) = A_{\fg}$. Moreover, $\gr_{12} B(E) \subset \gr_{21} B(E)$, because $\Omega_{\fg} \in \gr_{12} B(E)$ and $\gr_{12} B(E)$ is Poisson commutative. Then by Proposition \ref{subgr} we have $\gr_{12} B(E) = \gr_{21} B(E) = A_{\fg}$.
\end{proof}

From the last proposition and Proposition \ref{lift} it follows that $\gr_{2} B(E) = \mathcal{A}_{\fg}$ and we are done.

\end{proof}

\begin{rem}
From Theorem \ref{result} we get the construction of $A_{\fg}$ independent of center on critical level of $\hat \fg$.
\end{rem}

\begin{cor}
\label{max1}
$B(E)$ is a maximal commutative subalgebra of $Y(\fg)^{\fg}$.
\end{cor}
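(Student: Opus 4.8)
The final statement to prove is Corollary~\ref{max1}: that $B(E)$ is a maximal commutative subalgebra of $Y(\fg)^{\fg}$.

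The plan is to deduce maximality in the Yangian from the maximality of the associated graded subalgebra, exploiting the standard principle that if the associated graded of a commutative subalgebra is maximal commutative in the associated graded of the ambient algebra, then the original subalgebra is maximal commutative. First I would recall from Theorem~\ref{result} and its proof that $\gr_{12}B(E)=\gr_{21}B(E)=A_{\fg}$, the universal Gaudin subalgebra inside the Poisson algebra $S(\fg[t])^{\fg}$. The Corollary to Proposition~\ref{centr1} tells us that $A_{\fg}$ is a \emph{maximal commutative} subalgebra of $S(\fg[t])^{\fg}$ with respect to the bracket $\{\cdot,\cdot\}_0$ (equivalently, it is the centralizer of $\Omega_{\fg}$ inside $S(\fg[t])^{\fg}$).

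The core argument proceeds as follows. Suppose $b\in Y(\fg)^{\fg}$ commutes with every element of $B(E)$; I want to show $b\in B(E)$. Passing to the bigraded algebra $\bigrr Y(\fg)\simeq S(\fg[t])$ (Lemma~\ref{2brackets}), the symbol $\gr b$ lies in $S(\fg[t])^{\fg}$ since the filtrations and hence the bigrading are $\fg$-invariant. Because $b$ commutes with $B(E)$ in the filtered algebra, its symbol Poisson-commutes with $\gr_{12}B(E)=A_{\fg}$ with respect to $\{\cdot,\cdot\}_0$; in particular it commutes with $\Omega_{\fg}\in A_{\fg}$. By Proposition~\ref{centr1} this forces $\gr b\in A_{\fg}=\gr B(E)$. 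One then subtracts an element of $B(E)$ with the same symbol and induces on the (bi)degree to conclude $b\in B(E)$, using that the filtration is exhaustive and each filtered piece is finite-dimensional so the induction terminates.

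The main obstacle I expect is making the passage ``commutes in $Y(\fg)$ $\Rightarrow$ symbol Poisson-commutes in the bigraded algebra'' fully rigorous, since two filtrations are in play and the leading-term extraction must be compatible with both $F_1$ and $F_2$ simultaneously; one has to argue that the commutator $[b,B(E)]=0$ yields vanishing of the Poisson bracket of the top bigraded symbols, which is exactly the content of the Poisson structure on $\bigrr Y(\fg)$ spelled out before Lemma~\ref{2brackets}. A secondary subtlety is that $\gr b$ a priori lies in the bigraded algebra with a well-defined bidegree, and one must check that commuting with the single element $\Omega_{\fg}$ (whose bidegree is $(3,1)$) suffices to land $\gr b$ in $A_{\fg}$; this is guaranteed precisely because Proposition~\ref{centr1} characterizes $A_{\fg}$ as the full $\{\cdot,\cdot\}_0$-centralizer of $\Omega_{\fg}$ within the invariants, so no additional generators of $B(E)$ need be invoked. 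Once the symbol lies in $\gr B(E)$, the descending induction on bidegree is routine.
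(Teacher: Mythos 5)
Your proposal is correct and is essentially the paper's own argument: Corollary~\ref{max1} is the case $C=E$ of Corollary~\ref{maxc}, which the paper derives from precisely your two ingredients --- $\gr_{12}B(E)=A_{\fg}$ (Theorem~\ref{result}) and the characterization of $A_{\fg}$ as the $\{\cdot,\cdot\}_0$-centralizer of $\Omega_{\fg}$ in $S(\fg[t])^{\fg}$ (Proposition~\ref{centr1}) --- via the standard principle that maximal Poisson-commutativity of the associated graded implies maximal commutativity of the original subalgebra. On your flagged obstacle: the clean resolution is to extract leading terms sequentially (first with respect to $F_2$, landing in $U(\fg[t])^{\fg}$ where the symbol commutes with $\gr_2 B(E)=\mathcal{A}_{\fg}$, then with respect to $F_1$), rather than taking a single ``bigraded symbol'' of $b$ --- which need not be well defined or nonzero, as the paper's example $\bc[x+y]\subset\bc[x,y]$ shows --- and with $\gr_{12}$ read as $\gr_1\gr_2$ in this iterated sense your subtraction step becomes two routine nested inductions, one in each filtration degree.
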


\subsection{Application to the Gaudin model.}



Let $z \in \bc$.
Let $ev_z: U(\fg[t]) \to U(\fg)$ be an evaluation map.
Let $z_i, i=1, \ldots, n$ be different complex numbers. 
Let $d$ be the diagonal embedding
$$d: U(\fg[t]) \to U(\fg[t])^{\otimes n}.$$
We define a map $$ev_{z_1, \ldots, z_n}: = ev_{z_1} \otimes \ldots \otimes ev_{z_n} \circ d: U(\fg[t]) \to U(\fg) \otimes \ldots \otimes U(\fg).$$

\begin{defn}
Gaudin subalgebra $\mathcal{A}(z_1,\ldots,z_n)\subset U(\fg)^{\otimes n}$ is  $ev_{z_1,\dots,z_n}(\mathcal{A}_{\fg})$. 
\end{defn}

It is known (see \cite{ffr}) that this subalgebra is commutative and gives the complete set of integrals for the quantum Gaudin magnet chain.

From Theorem \ref{result} we have the following 
\begin{cor}
\label{corol2}
$ev_{z_1, \ldots, z_n}(\gr_2 B(E)) =  \mathcal{A}(z_1, \ldots, z_n)$.
\end{cor}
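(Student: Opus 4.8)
The plan is to combine Theorem~\ref{result} with the compatibility of evaluation homomorphisms with the filtration $F_2$. Recall that $\gr_2 B(E) = \mathcal{A}_{\fg}$ by Theorem~\ref{result}, and that $\mathcal{A}(z_1,\ldots,z_n)$ is \emph{defined} as $ev_{z_1,\ldots,z_n}(\mathcal{A}_{\fg})$. Thus the corollary asserts precisely that $ev_{z_1,\ldots,z_n}(\gr_2 B(E)) = ev_{z_1,\ldots,z_n}(\mathcal{A}_{\fg})$, which is an immediate consequence of the equality $\gr_2 B(E) = \mathcal{A}_{\fg}$. So the only content here is that these two subalgebras of $U(\fg[t])$ are literally equal as established in Theorem~\ref{result}, and the corollary is a direct substitution.

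More precisely, I would argue as follows. By Theorem~\ref{result} the associated graded subalgebra $\gr_2 B(E)$ equals the universal Gaudin subalgebra $\mathcal{A}_{\fg} \subset U(\fg[t])$. Applying the algebra homomorphism $ev_{z_1,\ldots,z_n}: U(\fg[t]) \to U(\fg)^{\otimes n}$ to both sides gives
$$ev_{z_1,\ldots,z_n}(\gr_2 B(E)) = ev_{z_1,\ldots,z_n}(\mathcal{A}_{\fg}).$$
By the definition of the Gaudin subalgebra, the right-hand side is exactly $\mathcal{A}(z_1,\ldots,z_n)$, which yields the claim.

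The one subtlety worth flagging, though it is not an obstacle, is the interpretation of the notation $\gr_2 B(E)$ on the left-hand side: here it must be read as the subalgebra of $U(\fg[t]) = \gr_2 Y(\fg)$ obtained by taking leading symbols of elements of $B(E)$ with respect to $F_2$, exactly as in Theorem~\ref{result}. Once this identification is in place, there is genuinely nothing to prove beyond invoking Theorem~\ref{result} and the definition of $\mathcal{A}(z_1,\ldots,z_n)$; the statement is a formal corollary. The substantive mathematical work—showing $\gr_2 B(E) = \mathcal{A}_{\fg}$—has already been carried out in the proof of Theorem~\ref{result} via the bigraded analysis of $\Omega_{\fg}$ and the uniqueness of liftings in Proposition~\ref{lift}, so no additional difficulty arises at this final step.
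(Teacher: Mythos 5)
Your proposal is correct and matches the paper's own reasoning: the paper derives Corollary~\ref{corol2} immediately from Theorem~\ref{result} (which gives $\gr_2 B(E) = \mathcal{A}_{\fg}$) together with the definition $\mathcal{A}(z_1,\ldots,z_n) = ev_{z_1,\ldots,z_n}(\mathcal{A}_{\fg})$, exactly as you do. Your remark on interpreting $\gr_2 B(E)$ as the subalgebra of leading symbols in $U(\fg[t]) = \gr_2 Y(\fg)$ is the correct reading and introduces no extra content beyond the paper's argument.
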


\begin{rem} \emph{Corollary \ref{corol2} generalizes Talalaev's formulas \cite{talalaev} for higher Gaudin Hamiltonians to $\fg$ of arbitrary type modulo knowledge of the universal $R$-matrix  $\hat R(u)$ for the Yangian. Namely, suppose that we know the expression of the universal $R$-matrix of Yangian in $PBW$-generators with respect to the filtration $F_2$ (i.e. ordered monomials form a basis of $Y(\fg)$ and their leading terms form a basis of $U(\fg[t])$, for example root generators in the ``new'' realization, see \cite{lev}). Then the leading terms of the generators of $B(E)$, namely leading terms of $\tau_i(u,E) = \tr_{V_{\omega_i}} (\rho_{\omega_i} \otimes \Id) \hat R(u), i = 1, \ldots, \rk \fg$, are the generators of $\gr_2 B(E)$. In type A the standard RTT generators are PBW generators, and the entries of $ (\rho_{\omega_i} \otimes \Id) \hat R(u)$ are the quantum $i\times i$ - minors, so we can write explicit formulas for the generators of $\gr_2 B(E)$ -- and these are precisely Talalaev's formulas.}
\end{rem}


\subsection{Proof of Theorem A in the general case.}


Let $C \in T$ and consider $\gr_2 B(C) \subset U(\fg[t])) $. As before $\fz_{\fg}(C)$ is the infinitesimal centralizer of $C$. In this subsection we are going to prove Theorem A in the full generality.

\begin{thm}
\label{result1}
$\gr_2 B(C)$ is the universal Gaudin subalgebra in $U(\fz_{\fg}(C)[t])^{\fz_{\fg}(C)}$, i.e. $\gr_2 B(C) = \mathcal{A}_{\fz_{\fg}(C)}$.
\end{thm}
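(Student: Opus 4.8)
The plan is to run the argument of Theorem~\ref{result} essentially verbatim, with the simple algebra $\fg$ replaced throughout by the reductive centralizer $\fz_\fg(C)$, using the conventions of the Remark at the end of Section~4 under which all the statements of that section extend to $\fz_\fg(C)$ (in particular the centralizer description of $A_{\fz_\fg(C)}$ in Proposition~\ref{centr1} and the uniqueness of the lifting in Proposition~\ref{lift}). Concretely, I would first prove $\gr_{21}B(C)=A_{\fz_\fg(C)}$ inside $S(\fg[t])$, then transfer this to $\gr_{12}B(C)$ via Proposition~\ref{subgr}, and finally lift to $U(\fg[t])$ via Proposition~\ref{lift}, exactly as in the case $C=E$. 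Throughout I use that $\gr_1 B(C)=\tilde B(C)$ for all $C\in T$ (the announced extension of Proposition~\ref{gr}), so that $\gr_{21}B(C)=\gr_2\tilde B(C)$.

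For the inclusion $\gr_{21}B(C)\subseteq A_{\fz_\fg(C)}$ I would assemble four facts. First, $B(C)$ is $\fz_\fg(C)$-invariant, so $\gr_{21}B(C)\subseteq S(\fg[t])^{\fz_\fg(C)}$. Second, $\gr_{21}B(C)$ actually lands in the subalgebra $S(\fz_\fg(C)[t])\subseteq S(\fg[t])$ generated by the centralizer. Third, $\Omega_{\fz_\fg(C)}=\sum_a x_a[0]x_a[1]$ (sum over an orthonormal basis of $\fz_\fg(C)$) lies in both $\gr_{21}B(C)$ and $\gr_{12}B(C)$, the analogue of Lemma~\ref{bigr}. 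Fourth, $\gr_{21}B(C)$ is Poisson commutative for $\{\cdot,\cdot\}_0$ since $B(C)$ is commutative. Granting these, $\gr_{21}B(C)$ lies in the $\{\cdot,\cdot\}_0$-centralizer of $\Omega_{\fz_\fg(C)}$ inside $S(\fz_\fg(C)[t])^{\fz_\fg(C)}$, which is $A_{\fz_\fg(C)}$ by the reductive form of Proposition~\ref{centr1}. Equality then comes from comparing Poincar\'e series: Proposition~\ref{size} produces $\rk\fg$ algebraically independent series in $\gr_{21}B(C)$ of degrees $m_j+1,m_j+2,\dots$, which are precisely the degrees of the free generators $D^k\Phi_l$ of $A_{\fz_\fg(C)}$, so the two series coincide.

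To recognise the symbols I would use Lemma~\ref{le:Taylor}: writing the twisted character $f_i(g)=\tr_{V_{\omega_i}}\rho_i(C)\rho_i(g)$ with Taylor expansion $f_i=\sum_k f_{i,k}$ at $E$, the leading $F_2$-term of any Fourier coefficient of a product of the $\sigma$'s is a $D$-derivative of the lowest non-vanishing Taylor term; since $D$ preserves $S(\fz_\fg(C)[t])$, this also makes $\gr_2\tilde B(C)$ manifestly $D$-stable, so $\Omega_{\fz_\fg(C)}=\tfrac12 D\,\omega_{\fz_\fg(C)}$ will follow once $\omega_{\fz_\fg(C)}$ is located among the degree $(2,0)$ symbols. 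The linear terms $f_{i,1}=\langle v_i,\cdot\rangle$ satisfy $v_i\in\fz(\fz_\fg(C))$ and account only for the central (abelian) part of the Gaudin algebra; the semisimple part of $A_{\fz_\fg(C)}$ must be produced by combinations $\sum_i c_i\sigma_i$ whose linear parts cancel, whose symbols then come from the quadratic (and higher) Taylor terms.

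This last point is where I expect the real difficulty, and it is the feature absent from the $C=E$ case. Writing $\fg=\fz_\fg(C)\oplus\mathfrak{m}$ orthogonally, each $f_{i,2}$ is $\fz_\fg(C)$-invariant but genuinely has a nonzero transverse component $f_{i,2}|_{\mathfrak{m}}=\tfrac12\tr_{V_{\omega_i}}\rho_i(C)\rho_i(\cdot)^2$, so a priori the symbols could leave $S(\fz_\fg(C)[t])$. The key technical lemma I would isolate and prove is that this transverse component is controlled by the central datum $v_i$: a weight computation should identify $f_{i,2}|_{\mathfrak{m}}$ with a multiple of the restriction of the Killing form to $\mathfrak{m}$, the multiple being a \emph{linear} function of $v_i\in\fz(\fz_\fg(C))$ (I have checked this cancellation in the rank-one transverse case, e.g. $\fz_\fg(C)=\fs(\fgl_2\oplus\fgl_1)\subset\fsl_3$). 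Consequently $\sum_i c_i v_i=0$ forces $\sum_i c_i f_{i,2}|_{\mathfrak{m}}=0$, so the symbols of the canceling combinations re-enter $S(\fz_\fg(C)[t])$, and an induction on the Taylor order $k$ should extend this to all higher symbols. Establishing this transverse cancellation at every order is the main obstacle; once it is in hand, the identification $\gr_{21}B(C)=A_{\fz_\fg(C)}$, the transfer to $\gr_{12}$ through Proposition~\ref{subgr}, and the final lift through the reductive form of Proposition~\ref{lift} proceed exactly as for $C=E$.
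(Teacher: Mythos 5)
Your overall skeleton (bigraded transfer via Proposition~\ref{subgr}, centralizer characterization, lifting via Proposition~\ref{lift}) matches the paper's, but the logical order is reversed in a way that creates a genuine gap. Your argument needs, as an \emph{input}, your fact~(b): $\gr_{21}B(C)\subseteq S(\fz_\fg(C)[t])$, so that the reductive form of Proposition~\ref{centr1} (centralizer of $\Omega_{\fz_\fg(C)}$ inside the \emph{small} algebra $S(\fz_\fg(C)[t])^{\fz_\fg(C)}$) applies. The paper never proves (b) as an input; instead it first proves the \emph{opposite} inclusion $A_{\fz_\fg(C)}\subseteq\gr_{21}B(C)$ (Proposition~\ref{proof2}), which only requires exhibiting specific elements, and then proves a strictly stronger maximality statement, Proposition~\ref{centr}: $A_{\fz_\fg(C)}$ is the centralizer of $\Omega_{\fz_\fg(C)}$ and a maximal Poisson-commutative subalgebra of the \emph{big} algebra $S(\fg[t])^{\fz_\fg(C)}$. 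Since $\gr_{21}B(C)$ is Poisson commutative and lies in $S(\fg[t])^{\fz_\fg(C)}$ by invariance alone, maximality forces $\gr_{21}B(C)=A_{\fz_\fg(C)}$, and your fact~(b) drops out as a corollary rather than a hypothesis. Proposition~\ref{centr} (proved via the Kostant-type projections of Lemma~\ref{514} and the limit automorphisms $\varphi_s$) is precisely the new ingredient for irregular $C$, and it has no counterpart in your plan; the reductive Proposition~\ref{centr1} cannot replace it.

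Both halves of your proposed proof of (b) are also problematic. First, your transverse-cancellation lemma as stated is false: for $C=\diag(a,a,b,c)$ in $\fsl_4$ (or already for regular $C$ in $\fsl_3$) the space of $\fz_\fg(C)$-invariant quadratic forms on $\mathfrak{m}$ has dimension $>1$, and $\frac12\tr\rho_1(C)\rho_1(y)^2$ has block-dependent coefficients of the form $(c_i+c_j)/2$, so it is not a multiple of the Killing form at all; your rank-one check $\fs(\fgl_2\oplus\fgl_1)\subset\fsl_3$ is blind to this because there the invariant forms on $\mathfrak{m}$ are one-dimensional. What is true is that the transverse part equals $\langle v_i,Q(\cdot,\cdot)\rangle$ for a quadratic map $Q$ independent of $i$, and the clean proof of this at \emph{all} Taylor orders simultaneously is the paper's coordinate trick in Proposition~\ref{proof2}: in the conjugation-equivariant chart $\Psi(h,x)=\exp(-x)C^{-1}\exp(h)\exp(x)$ any central function is literally independent of $x$, so its entire Taylor series lies in $\wt S(\fz_\fg(C))^{\fz_\fg(C)}$; no order-by-order induction is needed, and Kostant's theorem plus Lemma~\ref{lem1} then produce functions whose leading terms are exactly the generators $\Phi_i$. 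Second, and more fundamentally, even a complete cancellation statement only controls symbols of Fourier coefficients $F^{(r)}$ of translated central functions $F$; elements of $B(C)$ are arbitrary polynomials in such coefficients, and when leading terms cancel in such a polynomial the resulting symbol is not controlled by the symbols of the generators (the associated graded of a subalgebra is not generated by the symbols of its generators). So the step ``granting these, $\gr_{21}B(C)$ lies in the centralizer\ldots'' does not follow from your lemma; this is exactly the obstruction that the paper's maximality argument is designed to circumvent.
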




\begin{prop}
\label{proof2}

All generators of $A_{\fz_\fg(C)}$ belong to $\gr_{21} B(C)$.
\end{prop}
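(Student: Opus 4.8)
The plan is to mimic the strategy already used for $C=E$ in Theorem~\ref{result}, but now relative to the reductive centralizer $\fz_\fg(C)$. The universal Gaudin subalgebra $A_{\fz_\fg(C)}$ is freely generated by $D^k\Phi_i$ for free generators $\Phi_i$ of $S(\fz_\fg(C))^{\fz_\fg(C)}$, and by the remark at the end of Section~4 its exponents are exactly the $m_i$ attached to $\fz_\fg(C)$ (the exponents of $[\fz_\fg(C),\fz_\fg(C)]$ together with $\rk\fg-\rk[\fz_\fg(C),\fz_\fg(C)]$ zeros). The first thing I would do is match these with the output of Proposition~\ref{size}: that proposition produces $\rk\fg$ algebraically independent series in $\gr_1 B(C)$ whose degrees start at $m_i+1$, and the whole point of the computation there, via Lemma~\ref{lem1}, was that $\spann\langle d_{C^{-1}\exp(e)}\chi_{\omega_i}\rangle=\fz_{\fz_\fg(C)}(e)$ with $e$ the principal nilpotent of $\fz_\fg(C)$. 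So the degrees coming from $B(C)$ are precisely those of the generators of $A_{\fz_\fg(C)}$.

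Next I would identify a distinguished quadratic-type element. By analogy with Lemma~\ref{bigr}, the key is to show that the ``Casimir'' element $\Omega_{\fz_\fg(C)}=\sum_a x_a[0]x_a[1]$ (sum over an orthonormal basis of $\fz_\fg(C)$) lies in $\gr_{21}B(C)$, using that the generators of $B(C)$ are $\fz_\fg(C)$-invariant and that the degree-$(m_i+1)$ piece with $m_i=1$ — i.e. the lowest nontrivial exponent — must land in $(\fz_\fg(C)\cdot t\,\fz_\fg(C))^{\fz_\fg(C)}$, which is one-dimensional and spanned by $\Omega_{\fz_\fg(C)}$. One must be a little careful that the relevant invariant degree-$3$ space is computed inside $S(\fz_\fg(C)[t])^{\fz_\fg(C)}$ rather than $S(\fg[t])^{\fg}$, but the dimension count is the same as in Lemma~\ref{bigr}. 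Then I would invoke Proposition~\ref{centr1}: since $A_{\fz_\fg(C)}$ is the centralizer of $\Omega_{\fz_\fg(C)}$ in $S(\fz_\fg(C)[t])^{\fz_\fg(C)}$ with respect to $\{\cdot,\cdot\}_0$, and $\gr_{21}B(C)$ is Poisson-commutative and $\fz_\fg(C)$-invariant and contains $\Omega_{\fz_\fg(C)}$, the inclusion $\gr_{21}B(C)\subset A_{\fz_\fg(C)}$ follows. Combined with the matching Poincar\'e series from Proposition~\ref{size}, this forces $\gr_{21}B(C)=A_{\fz_\fg(C)}$, hence (by the argument of Proposition~\ref{subgr}, as in the $C=E$ case) that all generators of $A_{\fz_\fg(C)}$ lie in $\gr_{21}B(C)$, which is the assertion of the present Proposition~\ref{proof2}.

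The main obstacle I anticipate is verifying that the generators of $B(C)$ really land in the \emph{$\fz_\fg(C)$-invariant} part $S(\fz_\fg(C)[t])^{\fz_\fg(C)}$ of $S(\fg[t])$, and not merely in $S(\fg[t])^{\fz_\fg(C)}$. The $\fz_\fg(C)$-invariance of the $\tau_i(u,C)$ is clear (it is stated in Section~1.3), but to land inside the subalgebra generated by $\fz_\fg(C)[t]$ one needs to control the classical limit more precisely: the computation in Proposition~\ref{size} via the cocharacter $t^{\tilde\rho}$ and the evaluation at $C^{-1}\exp(e)$ shows that the differentials of the leading terms lie in $\fz_\fg(C)$, and I would leverage exactly this, showing that the leading symbols of the generators are built from $x[r]$ with $x\in\fz_\fg(C)$. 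A secondary subtlety is that $\fz_\fg(C)$ is only reductive, so one must use the conventions of the final remark of Section~4 (treating the central directions as exponents equal to zero) to make the generator count in Proposition~\ref{size} line up exactly with the free generators $D^k\Phi_i$ of $A_{\fz_\fg(C)}$.
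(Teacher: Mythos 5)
Your plan reverses the paper's logic and, as written, two of its steps fail. The first failure is the dimension count you use to put $\Omega_{\fz_\fg(C)}$ into $\gr_{21} B(C)$. In Lemma~\ref{bigr} that count works only because $B(E)\subset Y(\fg)^{\fg}$ and, $\fg$ being simple, $(\fg\cdot t\fg)^{\fg}$ is one-dimensional. For $C\neq E$ the generators of $B(C)$ are merely $\fz_\fg(C)$-invariant, so the degree-three part of $\gr_{21}B(C)$ a priori sits in $S^3(\fg)^{\fz_\fg(C)}+(\fg\cdot t\fg)^{\fz_\fg(C)}$, which is far from one-dimensional; and even the smaller space you quote, $(\fz_\fg(C)\cdot t\,\fz_\fg(C))^{\fz_\fg(C)}$, is not one-dimensional in general, since $\fz_\fg(C)$ is reductive but usually not simple: it has a center (and possibly several simple factors), so the invariants contain all products $z[0]w[1]$ with $z,w$ central as well as one Casimir-type element per simple factor. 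Thus ``the dimension count is the same as in Lemma~\ref{bigr}'' is false, and this step already presupposes exactly the fact you defer, namely that the relevant symbols lie in $S(\fz_\fg(C)[t])$. The second failure is the centralizer step: Proposition~\ref{centr1} (in its reductive version) identifies the centralizer of $\Omega_{\fz_\fg(C)}$ inside $S(\fz_\fg(C)[t])^{\fz_\fg(C)}$, whereas $\gr_{21}B(C)$ lives in $S(\fg[t])^{\fz_\fg(C)}$. To conclude $\gr_{21}B(C)\subset A_{\fz_\fg(C)}$ one needs that $A_{\fz_\fg(C)}$ is the centralizer of $\Omega_{\fz_\fg(C)}$ in the big algebra $S(\fg[t])^{\fz_\fg(C)}$; that is precisely Proposition~\ref{centr}, a separate and substantially harder statement (Kostant's restriction theorem, the projections $\pi$ and $\psi$, and a limit argument), which the paper proves after Proposition~\ref{proof2} and which cannot be replaced by Proposition~\ref{centr1}.

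Your proposed cure for the ``main obstacle'' is also insufficient: the computation in Proposition~\ref{size} only shows that certain differentials at the single point $C^{-1}\exp(e)$ lie in $\fz_{\fz_\fg(C)}(e)$, which controls the linear parts of the symbols, not the symbols themselves. The paper's proof of Proposition~\ref{proof2} avoids all of the above by a direct construction of the inclusion. By Lemma~\ref{le:Taylor}, the $\gr_2$-symbol of the Fourier coefficient $f^{(r)}$ of $f\in\mathcal{O}(\tilde G)_+$ equals $\frac{1}{(r-k)!}D^{r-k}f_k$, where $f_k$ is the first nonzero Taylor term of $f$; hence it suffices to realize each free generator $\Phi_i$ of $S(\fz_\fg(C))^{\fz_\fg(C)}$ as the first nonzero Taylor term of a suitable linear combination of the translated characters $\sigma_i(C)$. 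This is achieved by expanding central functions at $C^{-1}$ in the conjugation-equivariant coordinates $\Psi(h,x)=\exp(-x)C^{-1}\exp(h)\exp(x)$: centrality kills the dependence on the $\fn$-directions, so these expansions automatically lie in $\wt S(\fz_\fg(C))^{\fz_\fg(C)}$ (it is this geometric slice, not a differential computation, that forces the symbols into $S(\fz_\fg(C)[t])$), and then Kostant's isomorphism $J/J^2\simeq \fz_{\fz_\fg(C)}(e)$, the surjectivity of $\Theta$ (Lemma~\ref{lem1}, i.e.\ Steinberg's theorem), and the $\ad h$-eigenvalue filtration yield the desired combinations. Note finally that the statement to be proved is only the inclusion $A_{\fz_\fg(C)}\subset\gr_{21}B(C)$; your route goes through the full equality, which is strictly stronger and in the paper indeed requires Proposition~\ref{centr}.
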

\begin{proof}

The idea of the proof is as follows. Consider any linear combination of functions
$$\sigma_i(C)(g):=\Tr_{V_{\omega_i}} \rho_i(C) \rho_i(g), g \in \tilde G$$
and obtain from it the series of functions on $\mathcal{O}(G_1[[t^{-1}]])$. Then these functions by definition belong to $\tilde B(C) \subset \gr_{21} B(C)$. We will find all generators of $A_{\fz_{\fg}(C)}$ using this construction.

Recall that $\mathcal{O}(\tilde G)_+$ is the set of polynomial functions on $\tilde G$ consisting of $f \in \mathcal{O}(\tilde G)$ such that $f(E) = 0$.
According to Lemma~\ref{le:Taylor} it is sufficient to show that there are functions in $\mathcal{O}(\tilde G)^{\tilde G}_+$ such that first non-zero term in Taylor expansion at the point $C^{-1}$ are $\Phi_i, i = 1, \ldots, \rk \fg$, where $\Phi_i$ are free generators of $S(\fz_{\fg}(C)[0])^{\fz_{\fg}(C)}$. 
Indeed, it is equivalent to find $f \in \mathcal{O}(\tilde G)$ with $\Phi_i$ being the first non-zero term in the Taylor expansion of $f$ at the unity, as a linear combination of the functions  $\sigma_i(C)$.

We identify $\fg $ with $\fg^*$ by means of Killing form, so we can identify the coordinate ring $\mathcal{O}(\fg)$ with $S(\fg)$.
Consider the decomposition $\fg = \fz_{\fg}(C) \oplus \fn$, where $\fn$ is  sum of eigenspaces corresponding to eigenvalues of $\Ad(C)$ not equal to 1. This decomposition is orthogonal with respect to Killing form and thus $\mathcal{O}(\fz_{\fg}(C))$ gets identified with $S(\fz_{\fg}(C))$. 
We choose a formal coordinate system in the neighborhood of $C^{-1}\in \tilde G$ with the help of the map:
$$\Psi: \fz_{\fg}(C) \oplus \fn \to G, (h,x) \mapsto \exp(-x) C^{-1}\exp(h) \exp(x) $$
The differential of $\Psi$ at $C^{-1}$  is $(\Ad C^{-1} - \Id) \oplus \Id$ hence is non-degenerate. 

Let $\wt S(\fg)$ be the completion of $S(\fg)$ with respect to the maximal ideal of $0\in\fg$.
Consider the pullback $\Psi^*: \mathcal{O}(\tilde G) \to \wt S (\fg)$ which takes any function on $\tilde G$ to its Taylor series at $C^{-1}$ in our coordinates.
Let $f \in \mathcal{O}(\tilde G)^{\tilde G}$ be a central function. Since $f$ is constant on conjugacy classes its Taylor expansion $\Psi^*(f)$ does not depend on coordinates along $\fn$. So $\Psi^*(\mathcal{O}(\tilde G)^{\tilde G})\subset \wt S(\fz_{\fg}(C))$.
Let $Z_{\tilde{G}}(C)$ be a centralizer of $C$ in $\tilde G$. It is well-known that Lie algebra of $Z_{\tilde G}(C)$ is $\fz_{\fg}(C)$.  The map $\Psi$ is $Z_{\tilde{G}}(C)$-equivariant with respect to the adjoint action on $\fg$ and the action by conjugation on $\tilde G$, so we have $$
\Psi^*:\mathcal{O}(\tilde G)^{\tilde G}\to \wt S(\fz_{\fg}(C))^{\fz_{\fg}(C)}.$$

Let $J = S(\fz_{\fg}(C))^{\fz_{\fg}(C)}_+=\{f \in S(\fz_{\fg}(C))^{\fz_{\fg}(C)}\ |\ f(0) = 0 \}$. Let $e$ be the principal nilpotent of $\fz_{\fg}(C)$. According to Kostant~\cite{kostant}, the differential at $e$ gives the the isomorphism $d_e: J /J^2 \simeq \fz_{\fz_{\fg}(C)}(e)$. Consider the composite map
$$\Theta: \mathcal{O}(\tilde G)^{\tilde G}_+ \to \wt S(\fz_{\fg}(C))^{\fz_{\fg}(C)}_+\to \wt S(\fz_{\fg}(C))^{\fz_{\fg}(C)}_+ / (\wt S(\fz_{\fg}(C))^{\fz_{\fg}(C)}_+)^2  \simeq J /J^2 \to \fz_{\fz_{\fg}(C)}(e). $$
Here the first arrow is $\Psi^*$, second arrow is the projection and the last is taking the differential at $e$. The resulting map $\Theta$ is just taking the differential at $C^{-1}\exp(e)$. By Lemma~\ref{lem1} the map $\Theta$ is surjective.

Let $\{e,h,f \}$ be the corresponding $\fsl_2$-triple in $\fz_{\fg}(C)$. One can split the centralizer
 $\fz_{\fz_{\fg}(C)}(e)$ into the eigenspaces of the operator $\frac{1}{2}\ad h$:
 
$$\fz_{\fz_{\fg}(C)}(e) = \bigoplus_{i=1}^{\rk \fg} V_i.$$

Note that $\dim V_i$ is the number of algebraically independent generators of $S(\fz_{\fg}(C))^{\fz_{\fg}(C)}$ of degree $m_i + 1$, see \cite{kostant}.
For any $f\in\mathcal{O}(\tilde G)^{\tilde G}_+$ whose Taylor series expansion starts from the $k$-th term we have $$\Theta(f) \in \bigoplus_{m_i \geq k-1}^{\rk \fg} V_i.$$ 

By the surjectivity of $\Theta$ we have functions with Taylor series on $C^{-1}$ starting from $\Phi_i, i = 1, \ldots, \rk \fg$ where $\Phi_i, i = 1, \ldots, \rk \fg$ are free generators of $S(\fz_{\fg}(C))^{\fz_{\fg}(C)}$ and we are done.
\end{proof}

Proposition~\ref{proof2} implies that $A_{\fz_{\fg}(C)} \subset \gr_{21} B(C)$. To prove that in fact we have an equality we are going to prove that $A_{\fz_{\fg}(C)}$ is a maximal commutative subalgebra of $S(\fg[t])^{\fz_\fg(C)}$.

We mostly follow the argument of \cite{r1} below. Let $\{e,h,f\}$ be a principal $\fsl_2$-triple of $\fg$.
Let us recall two classical facts.

\begin{prop} (\cite{kostant})
\label{kostant}
Let $\pi$ be the restriction homomorphism
$$\pi: \bc[\fg] \to \bc[f + \fz_{\fg}(e)].$$
If we restrict $\pi$ to $\bc[\fg]^{\fg}$ we obtain an isomorphism $\bc[\fg]^{\fg} \simeq \bc[f + \fz_{\fg}(e)]$.
\end{prop}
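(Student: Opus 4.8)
The plan is to realize $f+\fz_\fg(e)$ as a \emph{Kostant slice} and to prove the isomorphism by a graded Nakayama argument built on a contracting $\bc^*$-action. Write $\chi=(\Phi_1,\dots,\Phi_{\rk\fg}):\fg\to\bc^{\rk\fg}$ for the adjoint quotient morphism, where the $\Phi_i$ are free homogeneous generators of $\bc[\fg]^\fg$ (these exist by Chevalley's theorem) with $\deg\Phi_i=m_i+1$; the map $\pi$ of the statement is exactly the comorphism of $\chi|_{\mathcal{S}}$, where $\mathcal{S}:=f+\fz_\fg(e)$. Since $\mathcal{S}\cong\fz_\fg(e)$ is an affine space and $\dim\fz_\fg(e)=\rk\fg$ by the principal $\mathfrak{sl}_2$-theory, both $\bc[\fg]^\fg$ and $\bc[\mathcal{S}]$ are polynomial rings in $\rk\fg$ variables, and the task is to show that the graded homomorphism between them induced by restriction is an isomorphism.

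First I would record the two $\mathfrak{sl}_2$-facts I need. From the representation theory of the principal triple $\{e,h,f\}$ acting on $\fg$ one has the decomposition $\fg=\fz_\fg(e)\oplus[\fg,f]$ (any $\mathfrak{sl}_2$-module $V$ satisfies $V=\ker(\ad e)\oplus\operatorname{im}(\ad f)$), and $\ad h$ acts on $\fz_\fg(e)$ with eigenvalues $2m_1,\dots,2m_{\rk\fg}$. Next I would introduce the cocharacter $\gamma:\bc^*\to G$ with $\Ad\gamma(t)$ acting by $t^k$ on the $k$-eigenspace of $\ad h$, together with the $\bc^*$-action $\psi_t(x)=t^2\Ad\gamma(t)x$. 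Since $f$ lies in the $(-2)$-eigenspace it is a fixed point, while $\psi_t$ scales the highest-weight directions of $\fz_\fg(e)$ by the positive powers $t^{2m_i+2}$; hence $\mathcal{S}$ is $\psi$-stable and contracts to $\{f\}$ as $t\to0$. Because each $\Phi_i$ is $\Ad$-invariant and homogeneous of degree $m_i+1$, we get $\psi_t^*\Phi_i=t^{2(m_i+1)}\Phi_i$, so the restriction comorphism $\theta:=(\chi|_{\mathcal{S}})^*:\bc[\fg]^\fg\to\bc[\mathcal{S}]$ is a homomorphism of $\mathbb{N}$-graded connected $\bc$-algebras with finite-dimensional graded components.

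The crux is then to compute the differential of $\chi|_{\mathcal{S}}$ at the cone point $f$. By $G$-invariance, $d_x\Phi_i=\langle\nabla\Phi_i(x),\cdot\rangle$ with $\nabla\Phi_i(x)\in\fz_\fg(x)$, and Kostant's regularity criterion (the $\nabla\Phi_i$ are linearly independent exactly at regular elements) gives, at the regular element $f$, that $\ker d_f\chi=\fz_\fg(f)^\perp=[\fg,f]$. Since $T_f\mathcal{S}=\fz_\fg(e)$ and $\fg=\fz_\fg(e)\oplus[\fg,f]$ by the first $\mathfrak{sl}_2$-fact, the composite $d_f(\chi|_{\mathcal{S}}):\fz_\fg(e)\to\bc^{\rk\fg}$ is an isomorphism; equivalently, $\theta$ induces an isomorphism on the cotangent spaces $\mathfrak{m}/\mathfrak{m}^2$ at the respective cone points.

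To finish I would run graded Nakayama: the cotangent isomorphism means $\theta$ is surjective on the spaces of indecomposables, hence surjective, and a graded surjection between polynomial rings of equal Krull dimension $\rk\fg$ must have trivial kernel (a nonzero kernel would force $\dim\bc[\mathcal{S}]<\dim\bc[\fg]^\fg$), so $\theta$ is an isomorphism. The grading supplied by the contracting $\psi$-action is what makes this work, since it upgrades the local statement ``$d_f$ is invertible'' to a global isomorphism. I expect the main obstacle to be the precise justification of the differential computation at $f$ — invoking Chevalley's freeness theorem together with Kostant's criterion for linear independence of the $d_x\Phi_i$ at regular $x$ — and the bookkeeping that makes $\theta$ genuinely graded, i.e. choosing $\gamma$ and the twist $t^2$ so that $\mathcal{S}$ is $\psi$-stable with $f$ as its unique attracting fixed point.
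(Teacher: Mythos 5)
Your proposal is correct, but there is nothing in the paper to compare it against line by line: the paper gives no proof of this proposition at all, quoting it directly from Kostant's 1963 paper [Ko], on which the surrounding argument (the maximality proof for $A_{\fz_\fg(C)}$) also relies. What you have written is the standard modern proof of the Kostant slice theorem, essentially the argument of Kostant as streamlined by Slodowy and Gan--Ginzburg: realize $\mathcal{S}=f+\fz_\fg(e)$ as a slice with a contracting $\bc^*$-action $\psi_t=t^2\Ad\gamma(t)$, compute that $d_f(\chi|_{\mathcal{S}})$ is invertible, and propagate by graded Nakayama plus a Krull-dimension count. The individual steps all check out: the $\fsl_2$-module decomposition $\fg=\fz_\fg(e)\oplus[\fg,f]$ is correct; $\psi_t$ fixes $f$ and contracts $\mathcal{S}$ with strictly positive weights $2m_i+2$, making the restriction map genuinely graded; invariance gives $\nabla\Phi_i(x)\in\fz_\fg(x)$, so together with nondegeneracy of the Killing form one gets $\ker d_f\chi=\fz_\fg(f)^\perp=[\fg,f]$, whence $d_f(\chi|_{\mathcal{S}})$ is bijective; and a graded surjection between polynomial rings (domains) of equal Krull dimension is automatically injective. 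The one caveat worth flagging is that your proof is not independent of the cited source: the key input --- that the differentials $d_x\Phi_i$ are linearly independent at regular $x$, applied at the regular nilpotent $f$ --- is itself a logically prior theorem from the same Kostant paper, as is the fact that $\ad h$ acts on $\fz_\fg(e)$ with eigenvalues $2m_1,\dots,2m_{\rk\fg}$. This is a legitimate structure (it is how essentially all modern expositions organize the material), but if the goal were to replace the citation [Ko] entirely, you would still owe a proof of the regularity criterion, or at least a direct verification that the $d_f\Phi_i$ are linearly independent at the principal nilpotent.
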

The next proposition is well-known.
\begin{prop}
\label{algebraic}
Let $\psi$ be the restriction homomorphism
$$\psi: \bc[\fg] \to \bc[\fh].$$
If we restrict $\psi$ to $\bc[\fg]^{\fg}$ we obtain an isomorphism $\bc[\fg]^{\fg} \simeq \bc[\fh]^W$, where $W$ is the Weyl group of $\fg$. Particularly, $\bc[\fh]$ is an algebraic extension of $\psi(\bc[\fg]^{\fg})$. 
\end{prop}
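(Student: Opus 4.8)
The statement is Chevalley's restriction theorem, and the plan is to establish the asserted isomorphism in three steps, with the bulk of the work in surjectivity. First I would check the easy containment $\psi(\bc[\fg]^\fg)\subseteq\bc[\fh]^W$: since $G$ is connected we have $\bc[\fg]^\fg=\bc[\fg]^G$, the Weyl group $W=N_G(T)/T$ acts on $\fh$ through conjugation by representatives in $G$, and therefore the restriction $\psi(F)=F|_\fh$ of a $G$-invariant $F$ is automatically $W$-invariant. Next, injectivity of $\psi$ on $\bc[\fg]^\fg$ rests on one geometric input: the regular semisimple locus $\fg^{rs}$ is Zariski-dense in $\fg$ (its complement is the zero locus of the discriminant) and every semisimple element is $G$-conjugate into $\fh$, so $\bigcup_{g\in G}\Ad(g)\,\fh$ is dense. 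A $G$-invariant polynomial vanishing on $\fh$ then vanishes on a dense set, hence is identically zero.

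The main obstacle is surjectivity onto $\bc[\fh]^W$, and here I would argue through integrality rather than attempt direct generation (the latter is subtle, as the power sums of weights of any fixed representation need not generate all of $\bc[\fh]^W$). For a finite-dimensional $\fg$-module $V$ and $k\ge 0$ the function $x\mapsto \tr_V(\rho_V(x)^k)$ is $G$-invariant, and its restriction to $\fh$ equals $\sum_\mu (\dim V_\mu)\,\mu(h)^k$, the $k$-th power sum of the multiset of weights of $V$. Reading off coefficients of $\tr_V(\exp(T\rho_V(h)))=\sum_\mu (\dim V_\mu)\,e^{T\mu(h)}$ and invoking Newton's identities shows that the image $B:=\psi(\bc[\fg]^\fg)$ contains all elementary symmetric functions of the multiset $\{\mu(h)\}$; in particular the coefficients of $\prod_\mu (T-\mu(h))$ lie in $B$, so each weight function $\mu(h)$ is integral over $B$. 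Because the fundamental weights form a basis of $\fh^*$ and each occurs as a weight of some representation, every linear coordinate on $\fh$ is integral over $B$, whence $\bc[\fh]$ is module-finite over $B$. To upgrade this to the equality $B=\bc[\fh]^W$ I would use that $B\cong\bc[\fg]^\fg$ is the invariant ring of the reductive group $G$ acting on the smooth variety $\fg$, hence normal, and that the induced finite morphism $\fh/W\to\fg/\!\!/G=\Spec B$ is bijective on points, since two elements of $\fh$ are $G$-conjugate if and only if they are $W$-conjugate. In characteristic zero a bijective finite morphism is birational, and a finite birational morphism onto a normal target is an isomorphism; equivalently $B=\bc[\fh]^W$. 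I expect this surjectivity step to be the delicate one, precisely because integrality alone does not force the subring $B$ to fill out $\bc[\fh]^W$ without the normality/bijectivity input.

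Finally, the last assertion is immediate from the finiteness already obtained. Each $\ell\in\fh^*$ satisfies the monic equation $\prod_{w\in W}(T-w(\ell))=0$, whose coefficients are $W$-invariant and hence lie in $\bc[\fh]^W$; thus $\bc[\fh]$ is integral over $\bc[\fh]^W$, and being a finitely generated algebra it is module-finite over $\bc[\fh]^W\cong\psi(\bc[\fg]^\fg)$. In particular $\bc[\fh]$ is an algebraic extension of $\psi(\bc[\fg]^\fg)$, as claimed.
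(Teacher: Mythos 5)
The paper gives no proof of this proposition: it is introduced with the words ``The next proposition is well-known'' (it is Chevalley's restriction theorem), and the only part used later is the final algebraicity claim, invoked in part (3) of Lemma~\ref{514}. So there is no internal argument to compare against, and judged on its own merits your proof is correct and complete. The containment $\psi(\bc[\fg]^\fg)\subseteq\bc[\fh]^W$ and the density-based injectivity are the standard steps. Your surjectivity argument is also sound: Newton's identities (legitimate in characteristic zero) show that the elementary symmetric functions of the weight multiset of any $V$ lie in $B:=\psi(\bc[\fg]^\fg)$, so every fundamental weight, hence every linear coordinate on $\fh$, is integral over $B$ and $\bc[\fh]$ is module-finite over $B$; then normality of $B$, finiteness and point-bijectivity of $\fh/W\to\Spec B$, together with the facts that a finite bijective morphism of integral varieties in characteristic zero is birational and that a finite birational morphism onto a normal variety is an isomorphism, force $B=\bc[\fh]^W$. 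Two comments. First, your one-line justification of injectivity on closed points (``$G$-conjugate $\Leftrightarrow$ $W$-conjugate'') silently uses two further GIT inputs: that semisimple $G$-orbits are closed, and that invariants of the reductive group $G$ separate disjoint closed orbits (equivalently, that closed points of $\Spec\bc[\fg]^G$ correspond to closed orbits); these are standard but they are genuine ingredients and should be named. Second, for comparison, the classical proof of surjectivity is purely representation-theoretic: one spans $\bc[\fh]^W$ by $W$-symmetrized powers of dominant integral weights and shows, by induction along the dominance order, that each such orbit sum is the restriction of a linear combination of trace functions $\tr_{V(\lambda)}\rho(x)^k$. Your route replaces that induction by normality plus a Zariski-main-theorem style argument; it invokes more algebraic geometry but is arguably cleaner, and it directly delivers module-finiteness of $\bc[\fh]$ over $\psi(\bc[\fg]^\fg)$ --- which is exactly the form of the statement the paper actually uses, including for the reductive centralizers $\fz_\fg(C)$ (per the remark closing Section~4), a setting to which your argument extends verbatim.
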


\begin{prop}
\label{centr}
$A_{\fz_{\fg}(C)}$ is a maximal commutative subalgebra of $S(\fg[t])^{\fz_{\fg}(C)}$ with respect to $\{\cdot, \cdot\}_0$. Moreover, $A_{\fz_{\fg}(C)}$ is centralizer of the element $\Omega_{\fz_{\fg}(C)}$.
\end{prop}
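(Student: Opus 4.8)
The plan is to prove the two assertions together, reducing the maximality of $A_{\fz_{\fg}(C)}$ in $S(\fg[t])^{\fz_{\fg}(C)}$ to the classical statement about the shift of argument subalgebra following the strategy of \cite{r1}. By Proposition~\ref{centr1} we already know that $A_{\fz_{\fg}(C)}$ is the centralizer of $\Omega_{\fz_{\fg}(C)}$ inside the smaller algebra $S(\fz_{\fg}(C)[t])^{\fz_{\fg}(C)}$ with respect to $\{\cdot,\cdot\}_0$, and by Proposition~\ref{centr2} it is the centralizer of $\omega_{\fz_{\fg}(C)}$ with respect to $\{\cdot,\cdot\}_1$. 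So the content of the present statement is that enlarging the ambient algebra from $S(\fz_{\fg}(C)[t])^{\fz_{\fg}(C)}$ to $S(\fg[t])^{\fz_{\fg}(C)}$ does not enlarge the centralizer. First I would establish that $A_{\fz_{\fg}(C)}$ is commutative with respect to $\{\cdot,\cdot\}_0$ as a subalgebra of $S(\fg[t])^{\fz_{\fg}(C)}$ (this is immediate, since the bracket $\{\cdot,\cdot\}_0$ on $S(\fz_{\fg}(C)[t])$ is the restriction of the one on $S(\fg[t])$), and then show that its centralizer in $S(\fg[t])^{\fz_{\fg}(C)}$ equals $A_{\fz_{\fg}(C)}$ itself.

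The key step is to pass to the generic fiber and argue pointwise. I would consider a generic point of $\fg[t]^* \simeq t^{-1}\fg[[t^{-1}]]$, truncated to a suitable jet space, and use the fact that the element $\Omega_{\fz_{\fg}(C)}=\sum_a x_a[0]\,x_a[1]$ (the sum running over an orthonormal basis of $\fz_{\fg}(C)$) still centralizes all of $A_{\fz_{\fg}(C)}$. The crucial feature is that $\Omega_{\fz_{\fg}(C)}$ plays the role of a regular semisimple shift: on the level of symbols, the bracket $\{\Omega_{\fz_{\fg}(C)}, \cdot\}_0$ encodes the operator of shifting the argument by a regular semisimple element of $\fz_{\fg}(C)$. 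I would use Proposition~\ref{algebraic} to reduce invariants to $\bc[\fh]^W$ and Proposition~\ref{kostant} (Kostant's section $f+\fz_{\fg}(e)$) to control the generators of the invariant ring along the principal $\fsl_2$-triple of $\fz_{\fg}(C)$. The point is that any element $z \in S(\fg[t])^{\fz_{\fg}(C)}$ commuting with $\Omega_{\fz_{\fg}(C)}$ must, on the generic fiber, be a function of the restrictions of the $\fz_{\fg}(C)$-invariants and their $D$-derivatives, hence already lie in $A_{\fz_{\fg}(C)}$.

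The main obstacle I anticipate is the enlargement of the symmetry group: an element of $S(\fg[t])^{\fz_{\fg}(C)}$ is only invariant under the (typically non-semisimple, or at least non-full) centralizer $\fz_{\fg}(C)$, not under all of $\fg$, so the clean invariant-theoretic picture for $A_{\fg}$ does not transfer verbatim. In particular I must rule out extra centralizing elements involving the ``off-diagonal'' directions $\fn$ in the decomposition $\fg = \fz_{\fg}(C)\oplus\fn$ used in the proof of Proposition~\ref{proof2}. The way I expect to handle this is to observe that $\Omega_{\fz_{\fg}(C)}$ only involves $\fz_{\fg}(C)$-directions, so the bracket $\{\Omega_{\fz_{\fg}(C)}, x_\beta[k]\}_0$ for a root vector $x_\beta \in \fn$ is nondegenerate enough (since $\Omega_{\fz_{\fg}(C)}$ restricts to a regular semisimple shift on $\fz_{\fg}(C)$ but $x_\beta$ is not $\fz_{\fg}(C)$-invariant) to force the $\fn$-components of any centralizing element to vanish after imposing $\fz_{\fg}(C)$-invariance. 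This reduces the computation back to $S(\fz_{\fg}(C)[t])^{\fz_{\fg}(C)}$, where Propositions~\ref{centr2} and~\ref{centr1} apply directly, completing the identification of the centralizer with $A_{\fz_{\fg}(C)}$.
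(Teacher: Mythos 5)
Your setup is right: maximality follows formally from the centralizer statement, and the real content is that enlarging the ambient algebra from $S(\fz_{\fg}(C)[t])^{\fz_{\fg}(C)}$ to $S(\fg[t])^{\fz_{\fg}(C)}$ does not enlarge the centralizer. But your proof of that point has an essential gap: the claim that $\{\Omega_{\fz_{\fg}(C)}, x_\beta[k]\}_0$ is ``nondegenerate enough to force the $\fn$-components of any centralizing element to vanish'' is not an argument, it is a restatement of the conclusion --- granting it, Proposition~\ref{centr1} indeed finishes the proof, so the entire difficulty of the proposition is concentrated in that one unproved sentence. Note that a general element of $S(\fg[t])^{\fz_{\fg}(C)}$ has no canonical splitting into a ``$\fz$-part'' and an ``$\fn$-part'': it is a polynomial mixing variables from $\fz_{\fg}(C)[t]$ and $\fn[t]$, and $\fz_{\fg}(C)$-invariant elements built purely from $\fn$-variables do exist (for instance $\omega_{\fg}-\omega_{\fz_{\fg}(C)}$, the $\fn$-part of the quadratic Casimir of $\fg$); checking that such elements, and mixed ones, fail to centralize $\Omega_{\fz_{\fg}(C)}$ requires a genuine mechanism. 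Your ``generic fiber'' paragraph does not supply one either: no pointwise argument is actually given, and the assertion that $\{\Omega_{\fz_{\fg}(C)},\cdot\}_0$ ``encodes shifting the argument by a regular semisimple element'' is not justified (and is not how $\Omega_{\fz_{\fg}(C)}=\sum_a x_a[0]x_a[1]$ acts).

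The paper's proof supplies exactly the missing mechanism, by a quite different route. First, using the identity $\{\Omega_{\fz_{\fg}(C)},z\}_0+\{\omega_{\fz_{\fg}(C)},z\}_1=0$, valid for all $\fz_{\fg}(C)$-invariant $z$, the problem becomes computing the $\{\cdot,\cdot\}_1$-centralizer of $\omega_{\fz_{\fg}(C)}$ in $S(\fg[t])^{\fz_{\fg}(C)}$. Second, Lemma~\ref{514} proves three facts via the Kostant-type projection $\pi$ (built from Proposition~\ref{kostant}) and the Cartan projection $\psi$ (built from Proposition~\ref{algebraic}): $\pi(A_{\fz_{\fg}(C)})\simeq S(\fz_{\fz_{\fg}(C)}(e)[t])$, $A_{\fz_{\fg}(C)}$ is algebraically closed in $S(\fg[t])$, and $S(\fh[t])$ is algebraic over $\psi(A_{\fz_{\fg}(C)})$. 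Third --- the key step, entirely absent from your proposal --- one degenerates along the $\{\cdot,\cdot\}_1$-automorphisms $\varphi_s(x[m])=x[m]+s\delta_{0m}\langle h,x\rangle$: the element $h[0]$ lies in $\lim_{s\to\infty}\varphi_s(A_{\fz_{\fg}(C)})$, and by \cite{r1} the $\{\cdot,\cdot\}_1$-centralizer of $h[0]$ in the \emph{whole} of $S(\fg[t])$ is $S(\fh[t])$. Then any nonzero centralizing element $a$ with $\psi(a)=0$ yields a contradiction (its rescaled $\varphi_s$-limit would have to lie in $S(\fh[t])$ while being killed by $\psi$), and any hypothetical centralizing element outside $A_{\fz_{\fg}(C)}$ is transcendental over it by algebraic closedness, hence can be corrected, using the algebraicity of $S(\fh[t])$ over $\psi(A_{\fz_{\fg}(C)})$, to a nonzero centralizing element annihilated by $\psi$. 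Without the bracket exchange, the algebraicity lemma, and the degeneration to $h[0]$, your reduction to Propositions~\ref{centr1} and~\ref{centr2} does not go through.
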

\begin{proof}
Let $$\psi: S(\fg[t]) \to S(\fh[t])$$ be a $\fh$-invariant projection.




Let $\fg = \fz_{\fz_\fg(C)}(e) \oplus \fn$, where $\fn$ is any complement subspace. Consider $$\pi: S(\fg[t]) \to S(\fz_{\fz_\fg(C)}(e)[t])$$ such that $$\pi(x[n]) = x[n], x \in \fz_{\fz_\fg(C)}(e), \pi(x[n]) = \delta_{0n} \left<x,f\right> , x \in \fn.$$
This map generalize the map from Proposition \ref{kostant}.

\begin{lem} 
\label{514}
\begin{enumerate}
\item $\pi(A_{\fz_{\fg}(C)}) \simeq S(\fz_{\fz_{\fg}(C)}(e)[t])$; 
\item $A_{\fz_{\fg}(C)}$ is algebraically closed in  $S(\fg[t])$; 
\item $\psi(A_{\fz_{\fg}(C)}) \subset S(\fh[t])$ is an algebraic extension.
\end{enumerate}
\end{lem}
\begin{proof}
1) Let $\Phi_i, i = 1, \ldots, \rk \fg$ be  algebraically independent generators of $S(\fz_{\fg}(C)[0])^{\fz_{\fg}(C)}$.

We have $\pi(\bc[\Phi_1, \ldots, \Phi_k]) \simeq S(\fz_{\fz_\fg(C)}(e)[0])$ by Proposition \ref{kostant}. Moreover, $D$ commutes with $\pi$ hence we have $$\pi(\bc[D^s \Phi_1, \ldots, D^s \Phi_k]) \simeq S(\fz_{\fz_\fg(C)}(e)[s])$$ and hence $$\pi(A_{\fz_{\fg}(C)}) \simeq S(\fz_{\fz_\fg(C)}(e)[t]).$$

2) Suppose that $A_{\fz_{\fg}(C)}$ is not algebraically closed. Let $a \in S(\fg[t])$ be an element which is algebraic over $A_{\fz_{\fg}(C)}$. Then by the first statement of this Lemma we can assume that $\pi(a) = 0$.
Suppose that $p_n a^n + \ldots + p_1 a + p_0 = 0$, where $p_i \in A_{\fz_{\fg}(C)}$ and $n$ is minimal. Then $\pi(p_0) = 0$, so we have $(p_n a^{n-1} + \ldots + p_1) a = 0$. But $S(\fg[t])$ does not have zero divisors and we have contradiction with minimality of $n$.

3) Note that $\psi(\bc[\Phi_1, \ldots, \Phi_{\rk \fg}]) \subset S(\fh[0])$ is the algebraic extension from Proposition \ref{algebraic}. Using the fact that $D$ commutes with $\psi$ we see that $$\psi(\bc[\Phi_1, \ldots, \Phi_{\rk \fg}, D \Phi_1, \ldots, D \Phi_{\rk \fg}, \ldots,  D^s \Phi_1, \ldots, D^s \Phi_{\rk \fg}]) \subset  S(\bigoplus_{i=0}^s\fh[i])$$ is the algebraic extension as well for any $s$.

\end{proof}

From Lemma \ref{centr2} it follows that the centralizer of $\omega_{\fz_{\fg}(C)}$ in $S(\fg[t])_{0,1}^{\fz_{\fg}(C)}$ contains the subalgebra $A_{\fz_{\fg}(C)}$.

Now let us define the family of automorphisms of $S(\fg[t])$ with respect to the bracket $\{\cdot , \cdot \}_1$. Let $\varphi_s(x[m]) = x[m] + s \delta_{0m} \left<h,x\right>$. It is a straightforward computation that $\varphi_s$ is an automorphism. We use the notion of a limit subalgebra in the next Lemma (see subsection \ref{limitgraded} for the definition). We also use the notion of a limit of a one-parametric family of subalgebras analogous to that of \ref{limit}.

\begin{lem} We have $h[0] \in \lim_{s \to \infty} \varphi_s(A_{\fz_{\fg}(C)})$.
\end{lem}

\begin{proof}
Recall that an element $h$ is an element from principal $\fsl_2$-triple of $\fg$.
It is straightforward computation that $$\lim_{s \to \infty} \dfrac{\varphi_s(\omega_{\fz_{\fg}(C)}) - s^2 \langle h,h\rangle}{2s} = h[0].$$

\end{proof}

\begin{lem}(\cite{r1})
The algebra $S(\fh[t])$ is the centralizer of $h[0]$ in $S(\fg[t])$ with respect to $\{ \cdot, \cdot\}_1$. 
\end{lem}

Now return to the proof of the Proposition. The centralizer of $\Omega_{\fz_{\fg}(C)}$ with respect to $\{\cdot, \cdot\}_0$ coincide with the centralizer of $\omega_{\fz_{\fg}(C)}$ with respect to $\{\cdot, \cdot\}_1$. Suppose that we have some element $a\not\in A_{\fz_{\fg}(C)}$ in centralizer of $\omega_{\fz_{\fg}(C)}$ with respect to $\{\cdot, \cdot\}_1$. From Lemma \ref{514} it follows that $a$ should be transcendental over $A_{\fz_{\fg}(C)}$ so we can assume that $\psi(a) = 0$. For some $k$ we have a non-zero limit $\tilde a = \lim_{s \to \infty} \frac{\varphi_s(a)}{s^k} \in \varphi_s (A_{\fz_{\fg}(C)})$. This limit should lie in the centralizer of the element $h[0]$, and then lie in $S(\fh[t])$. It means that $\psi(a) \ne 0$ which is a contradiction and completes the proof.

\end{proof}

\begin{cor}
\label{corol3}
We have $\gr_{12} B(C) = A_{\fz_{\fg}(C)}$ and $\gr_2 B(C) = \mathcal{A}_{\fz_{\fg}(C)}$.
\end{cor}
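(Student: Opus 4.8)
The plan is to repeat verbatim the bigraded argument that proved the case $C=E$ in Theorem~\ref{result}, feeding in the two substantial inputs that have already been secured for arbitrary $C\in T$: the inclusion $A_{\fz_{\fg}(C)}\subset\gr_{21}B(C)$ of Proposition~\ref{proof2}, and the maximality-plus-centralizer description of $A_{\fz_{\fg}(C)}$ in Proposition~\ref{centr}. All of the work takes place inside the single Poisson algebra $\bigrr Y(\fg)\simeq S(\fg[t])$, where $\gr_{12}B(C)$, $\gr_{21}B(C)$ and $A_{\fz_{\fg}(C)}$ all sit as subspaces.

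First I would identify $\gr_{21}B(C)$. Since the generators of $B(C)$ are $\fz_{\fg}(C)$-invariant, $\gr_{21}B(C)$ is a subalgebra of $S(\fg[t])^{\fz_{\fg}(C)}$, and it is Poisson-commutative with respect to $\{\cdot,\cdot\}_0$ because the commutator of elements of the commutative algebra $B(C)$ vanishes. By Proposition~\ref{proof2} it contains $A_{\fz_{\fg}(C)}$, and since Proposition~\ref{centr} says that $A_{\fz_{\fg}(C)}$ is a \emph{maximal} Poisson-commutative subalgebra of $S(\fg[t])^{\fz_{\fg}(C)}$, maximality forces $\gr_{21}B(C)=A_{\fz_{\fg}(C)}$.

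Next I would move to the $\gr_{12}$ side exactly as in Lemma~\ref{bigr}. The element $\Omega_{\fz_{\fg}(C)}=\sum_a y_a[0]y_a[1]$, for $\{y_a\}$ an orthonormal basis of $\fz_{\fg}(C)$, lies in $A_{\fz_{\fg}(C)}=\gr_{21}B(C)$ and is homogeneous of bidegree $(3,1)$. Pick $b\in B(C)$ with $\gr_{21}b=\Omega_{\fz_{\fg}(C)}$; because $F_1^{(k)}Y(\fg)\cap F_2^{(l)}Y(\fg)=F_1^{(k)}Y(\fg)\cap F_2^{(k-1)}Y(\fg)$ for $l\ge k$, this bidegree is extremal, so $b\in F_1^{(3)}\cap F_2^{(1)}$ and its $\gr_{12}$-symbol is again $\Omega_{\fz_{\fg}(C)}$; hence $\Omega_{\fz_{\fg}(C)}\in\gr_{12}B(C)$. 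Now $\gr_{12}B(C)$ is itself Poisson-commutative inside $S(\fg[t])^{\fz_{\fg}(C)}$ and contains $\Omega_{\fz_{\fg}(C)}$, so by the centralizer half of Proposition~\ref{centr} it is contained in the centralizer of $\Omega_{\fz_{\fg}(C)}$, that is, in $A_{\fz_{\fg}(C)}=\gr_{21}B(C)$. Thus $\gr_{12}B(C)\subset\gr_{21}B(C)$ as subspaces of $\bigrr Y(\fg)$, and Proposition~\ref{subgr} upgrades this inclusion to the equality $\gr_{12}B(C)=\gr_{21}B(C)=A_{\fz_{\fg}(C)}$. (The two identifications of $\bigrr Y(\fg)$ with $S(\fg[t])$ differ only by the scaling automorphism $\psi$ of Lemma~\ref{psi}, which fixes $A_{\fz_{\fg}(C)}$ and merely rescales $\Omega_{\fz_{\fg}(C)}$, so this comparison is consistent.)

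Finally, $\gr_{12}B(C)=\gr_1\gr_2 B(C)=A_{\fz_{\fg}(C)}$ exhibits $\gr_2 B(C)$ as a commutative, $\fz_{\fg}(C)$-invariant lifting of $A_{\fz_{\fg}(C)}$; uniqueness of such a lifting (Proposition~\ref{lift}, applied to the reductive algebra $\fz_{\fg}(C)$) then yields $\gr_2 B(C)=\mathcal{A}_{\fz_{\fg}(C)}$. I expect the only delicate point to be this last step: Proposition~\ref{lift} is phrased for liftings inside $U(\fz_{\fg}(C)[t])^{\fz_{\fg}(C)}$, so one must first check that $\gr_2 B(C)$ genuinely lands in $U(\fz_{\fg}(C)[t])$ — which follows by a standard filtered induction, since all of its $F_1$-symbols already lie in $S(\fz_{\fg}(C)[t])$ — or else rerun the "centralizer of the unique lift of $\Omega_{\fz_{\fg}(C)}$" argument directly inside $U(\fg[t])^{\fz_{\fg}(C)}$. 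The genuinely hard content is all in the cited Propositions~\ref{proof2} and~\ref{centr}; the corollary itself is just the bigraded bookkeeping that glues them together.
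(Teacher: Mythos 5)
Your proposal is correct and follows essentially the same route as the paper's proof: the paper establishes $\gr_{21}B(C)=A_{\fz_{\fg}(C)}$ from Proposition~\ref{proof2} together with the maximality statement of Proposition~\ref{centr}, produces $\Omega_{\fz_{\fg}(C)}$ in $\gr_{12}B(C)$ by the same bidegree-extremality argument as in Lemma~\ref{bigr}, deduces $\gr_{12}B(C)\subset\gr_{21}B(C)$ from the centralizer half of Proposition~\ref{centr}, upgrades this to equality by Proposition~\ref{subgr}, and concludes $\gr_2 B(C)=\mathcal{A}_{\fz_{\fg}(C)}$ from the uniqueness (up to scalar and additive constant) of the lift of $\Omega_{\fz_{\fg}(C)}$ to $U(\fg[t])$ --- that is, exactly your second option for the final step.

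One warning about your first option for that final step: the proposed ``standard filtered induction'' showing $\gr_2 B(C)\subset U(\fz_{\fg}(C)[t])$ from the fact that its $F_1$-symbols lie in $S(\fz_{\fg}(C)[t])$ does not work. For filtered subspaces $U,W\subset A$, the inclusion $\gr U\subset\gr W$ does not imply $U\subset W$: in $\bc[x,y]$ with $\deg x=1$, $\deg y=0$, the subalgebras $U=\bc[x+y]$ and $W=\bc[x]$ have the same associated graded $\bc[x]$, yet $U\not\subset W$ (this is the same phenomenon as the paper's own $\bc[x+y]$ example after Lemma~\ref{le:bigr}). The induction fails because after subtracting a lift $w\in W$ of the symbol of $u\in U$, the difference $u-w$ no longer lies in $U$, so one cannot iterate. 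The containment $\gr_2 B(C)\subset U(\fz_{\fg}(C)[t])$ is a \emph{consequence} of the equality $\gr_2 B(C)=\mathcal{A}_{\fz_{\fg}(C)}$, not an available input. This is precisely why the paper runs the centralizer-of-the-unique-lift argument directly inside $U(\fg[t])^{\fz_{\fg}(C)}$, which is possible because Proposition~\ref{centr} describes $A_{\fz_{\fg}(C)}$ as a centralizer in the big algebra $S(\fg[t])^{\fz_{\fg}(C)}$ rather than only in $S(\fz_{\fg}(C)[t])^{\fz_{\fg}(C)}$; your proof is complete once you commit to that branch.
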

\begin{proof}
Analogous to Lemma \ref{bigr} we can find $\Omega_{\fz_{\fg}(C)}$ in $\gr_{12} B(C)$. Then from Proposition \ref{centr} and the fact that $\gr_{12} B(C)$ is Poisson commutative it follows that $\gr_{12} B(C) \subset \gr_{21} B(C)$ and then by Proposition \ref{subgr} we have $\gr_{12} B(C) \subset \gr_{21} B(C) = A_{\fz_{\fg}(C)}$.
From the uniqueness of lifting of $\Omega_{\fz_{\fg}(C)}$ to $U(\fg[t])$ (up to scalar and additive constant) we see that $\gr_2 B(C) = \mathcal{A}_{\fz_{\fg}(C)}$.
\end{proof}

This finishes the proof of Theorem~\ref{result1}.

\begin{cor}\label{maxc}
$B(C)$ is a maximal commutative subalgebra of $Y(\fg)^{\mathfrak{z}_{\fg}(C)}$ and $\tilde B(C)$ is a maximal Poisson commutative subalgebra in $\mathcal{O}(G_1[[t^{-1}]])^{\mathfrak{z}_{\fg}(C)}$.
\end{cor}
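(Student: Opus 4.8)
The plan is to derive both maximality statements from the classical maximality already established in Proposition~\ref{centr}, by descending through the filtrations $F_1$ and $F_2$. I would prove the Poisson statement for $\tilde B(C)$ first and then bootstrap the Yangian statement from it. The preliminary step is to compute $\gr_2\tilde B(C)$. First I note that $\tilde B(C)$ is Poisson commutative and $\fz_\fg(C)$-invariant, since it is contained in $\gr_1 B(C)$, which is Poisson commutative because $B(C)$ is commutative and $F_1$ is $\fz_\fg(C)$-invariant. Next I would show $\gr_2\tilde B(C)=A_{\fz_\fg(C)}$. The inclusion $\gr_2\tilde B(C)\subseteq A_{\fz_\fg(C)}$ comes from $\tilde B(C)\subseteq\gr_1 B(C)$ together with $\gr_2\gr_1 B(C)=\gr_{21}B(C)=A_{\fz_\fg(C)}$ (Corollary~\ref{corol3} and Proposition~\ref{subgr}), using that $\gr_2$ preserves inclusions of subspaces. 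The reverse inclusion $A_{\fz_\fg(C)}\subseteq\gr_2\tilde B(C)$ is exactly the content of the construction in Proposition~\ref{proof2}: the free generators of $A_{\fz_\fg(C)}$ appear as the $F_2$-leading terms of Fourier coefficients of linear combinations of the series $\sigma_i(u,C)$, and those coefficients lie in $\tilde B(C)$ by definition. Finally, by Lemma~\ref{le:poisson} the bracket $\{\cdot,\cdot\}_\ell$ induced on $\gr_2\mathcal{O}(G_1[[t^{-1}]])=S(\fg[t])$ coincides with $\{\cdot,\cdot\}_0$.

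For the maximality of $\tilde B(C)$, I would take an arbitrary Poisson commutative, $\fz_\fg(C)$-invariant subalgebra $\tilde B'\supseteq\tilde B(C)$ of $\mathcal{O}(G_1[[t^{-1}]])$ and pass to $\gr_2$. Since $\{a,b\}=0$ forces $\{\gr_2 a,\gr_2 b\}_\ell=0$, and since $F_2$ is $\fz_\fg(C)$-invariant, the space $\gr_2\tilde B'$ is a Poisson commutative, $\fz_\fg(C)$-invariant subalgebra of $(S(\fg[t])^{\fz_\fg(C)},\{\cdot,\cdot\}_0)$ containing $\gr_2\tilde B(C)=A_{\fz_\fg(C)}$. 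By Proposition~\ref{centr}, $A_{\fz_\fg(C)}$ is maximal commutative there, so $\gr_2\tilde B'=A_{\fz_\fg(C)}=\gr_2\tilde B(C)$. As $\tilde B(C)\subseteq\tilde B'$ have equal associated $F_2$-graded, a leading-term induction (valid because each element has finite, and $F_2$ is bounded below) yields $\tilde B'=\tilde B(C)$.

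To obtain the maximality of $B(C)$ I would use $\gr_1$. Applying the maximality just proved to the Poisson commutative, $\fz_\fg(C)$-invariant subalgebra $\gr_1 B(C)\supseteq\tilde B(C)$ gives $\gr_1 B(C)=\tilde B(C)$. Now let $B'\supseteq B(C)$ be commutative and $\fz_\fg(C)$-invariant in $Y(\fg)$. Then $\gr_1 B'$ is Poisson commutative, $\fz_\fg(C)$-invariant, and contains $\gr_1 B(C)=\tilde B(C)$; by the maximality of $\tilde B(C)$ we get $\gr_1 B'=\tilde B(C)=\gr_1 B(C)$, and the same leading-term argument, now for the $F_1$-filtration, forces $B'=B(C)$.

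The whole argument is carried by the earlier results, so I expect no new idea to be required; the only point demanding genuine care is the identification $\gr_2\tilde B(C)=A_{\fz_\fg(C)}$, where one must combine the upper bound $\gr_{21}B(C)=A_{\fz_\fg(C)}$ with the explicit generators produced in Proposition~\ref{proof2}, and verify that passing to associated graded neither enlarges the invariant ambient algebra nor destroys (Poisson) commutativity. I expect this bookkeeping, together with the routine ``equal associated graded plus inclusion implies equality'' step, to be the main obstacle.
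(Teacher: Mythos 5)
Your proposal is correct and is essentially the paper's own argument: the paper's one-sentence proof invokes exactly the same ingredients (Proposition~\ref{centr}, Corollary~\ref{corol3}, and the principle that a commutative subalgebra whose associated graded is maximal Poisson-commutative is itself maximal commutative), and your write-up just makes explicit the bookkeeping the paper suppresses — the identification $\gr_2 \tilde B(C) = A_{\fz_{\fg}(C)}$ via Proposition~\ref{proof2} plus Corollary~\ref{corol3}, and the leading-term induction behind the principle. The only organizational difference is that you establish $\gr_1 B(C) = \tilde B(C)$ inside the proof in order to bootstrap the Yangian statement, whereas the paper records that equality as a separate corollary deduced \emph{after} the maximality; this is a harmless reordering, not a different method.
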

\begin{proof}
This follows from Proposition~\ref{centr} and Corollary~\ref{corol3}, since if the associated graded of a commutative subalgebra is maximal Poisson-commutative then the original subalgebra is maximal commutative as well.
\end{proof}

\begin{cor}
$\gr_1 B(C) = \tilde B(C)$ for any $C \in T$.
\end{cor}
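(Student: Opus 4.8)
The plan is to reduce the statement to a comparison of $F_2$-associated graded algebras. The inclusion $\tilde B(C)\subseteq\gr_1 B(C)$ is already available: it was recorded in the proof of Proposition~\ref{size}, following \cite[Proposition~4.8]{ir2}. Thus the only thing left is the reverse inclusion, and rather than attempt it directly I would show that $\tilde B(C)$ and $\gr_1 B(C)$, viewed as filtered subalgebras of $\mathcal{O}(G_1[[t^{-1}]])$ with respect to $F_2$, have the \emph{same} associated graded. A filtered inclusion inducing an equality of associated graded subspaces is itself an equality, so this would conclude the proof.

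First I would pin down the common associated graded. By Corollary~\ref{corol3} we have $\gr_2\gr_1 B(C)=\gr_{21}B(C)=A_{\fz_{\fg}(C)}$, sitting inside $\gr_2\mathcal{O}(G_1[[t^{-1}]])=\gr_{21}Y(\fg)=S(\fg[t])$. Since $\tilde B(C)\subseteq\gr_1 B(C)$ and passing to the associated graded of a single filtration is monotone under inclusion, this immediately yields $\gr_2\tilde B(C)\subseteq\gr_{21}B(C)=A_{\fz_{\fg}(C)}$.

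For the opposite inclusion $A_{\fz_{\fg}(C)}\subseteq\gr_2\tilde B(C)$ I would revisit the proof of Proposition~\ref{proof2}. There the free generators $\tfrac{1}{(r-k)!}D^{r-k}\Phi_i$ of $A_{\fz_{\fg}(C)}$ are produced, via Lemma~\ref{le:Taylor}, exactly as the $F_2$-leading terms of the Fourier coefficients of the functions $\sigma(C)(g)=\Tr_V\rho(C)\rho(g)$ attached to suitable central functions $f\in\mathcal{O}(\tilde G)^{\tilde G}_+$. These Fourier coefficients lie in $\tilde B(C)$ by Definition~\ref{def}, together with the remark that $\tilde B(C)$ may be generated from all finite-dimensional $\tilde G$-modules. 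Hence every generator of $A_{\fz_{\fg}(C)}$ is the symbol of an element of $\tilde B(C)$, and combining the two inclusions gives $\gr_2\tilde B(C)=A_{\fz_{\fg}(C)}=\gr_2\gr_1 B(C)$.

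It then remains to run the standard filtered argument. Since $\tilde B(C)\subseteq\gr_1 B(C)$ and their $F_2$-associated graded subalgebras agree inside $S(\fg[t])$, an induction on the $F_2$-degree gives $F_2^{(n)}\gr_1 B(C)\subseteq\tilde B(C)$ for every $n$: for $v\in F_2^{(n)}\gr_1 B(C)$ its symbol coincides with that of some $u\in\tilde B(C)$, so $v-u\in F_2^{(n-1)}\gr_1 B(C)$ belongs to $\tilde B(C)$ by the inductive hypothesis, and therefore so does $v$. As the filtration $F_2$ is exhaustive and bounded below, this yields $\gr_1 B(C)\subseteq\tilde B(C)$, hence equality. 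The one step requiring care is the inclusion $A_{\fz_{\fg}(C)}\subseteq\gr_2\tilde B(C)$: one must know that the generators of $A_{\fz_{\fg}(C)}$ genuinely occur as leading terms of elements of $\tilde B(C)$, and not merely as elements of the a priori possibly larger algebra $\gr_{21}B(C)$. This is precisely what the surjectivity of the map $\Theta$ in the proof of Proposition~\ref{proof2} secures.
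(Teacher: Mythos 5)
Your proof is correct, but it is organized differently from the paper's. The paper's own proof is a one-liner: by Corollary~\ref{maxc}, $\tilde B(C)$ is a \emph{maximal} Poisson-commutative subalgebra of $\mathcal{O}(G_1[[t^{-1}]])^{\fz_{\fg}(C)}$, and since $\gr_1 B(C)$ is a Poisson-commutative subalgebra of that same invariant algebra containing $\tilde B(C)$, the inclusion must be an equality. You bypass the maximality statement entirely and instead compare $F_2$-associated gradeds, showing $\gr_2\tilde B(C)=\gr_2\gr_1 B(C)=A_{\fz_{\fg}(C)}$ and then running the filtered induction to upgrade the inclusion to an equality. In substance the two routes rest on the same pillars: Proposition~\ref{proof2}, whose proof realizes the generators of $A_{\fz_{\fg}(C)}$ as $F_2$-leading terms of elements of $\tilde B(C)$ (the surjectivity of $\Theta$, as you note), and Proposition~\ref{centr} (maximality of $A_{\fz_{\fg}(C)}$), which you use through the equality $\gr_{21}B(C)=A_{\fz_{\fg}(C)}$. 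Indeed, your filtered induction is exactly the general principle invoked without proof in Corollary~\ref{maxc} (``if the associated graded of a commutative subalgebra is maximal Poisson-commutative then the subalgebra is maximal commutative''), and the inclusion $A_{\fz_{\fg}(C)}\subseteq\gr_2\tilde B(C)$ that you single out as the delicate step is precisely what the paper needs, implicitly, for Corollary~\ref{maxc} to apply to $\tilde B(C)$ rather than only to $\gr_1 B(C)$. So your argument is essentially an inlined, more explicit version of the paper's chain: it buys self-containedness and makes the hidden step visible, at the cost of redoing the mechanism the paper has packaged into Corollary~\ref{maxc}. One minor citation point: the equality $\gr_{21}B(C)=A_{\fz_{\fg}(C)}$ appears in the \emph{proof} of Corollary~\ref{corol3} (and the discussion preceding it), while the statement of that corollary records $\gr_{12}B(C)=A_{\fz_{\fg}(C)}$ and $\gr_2 B(C)=\mathcal{A}_{\fz_{\fg}(C)}$; this is harmless but worth citing precisely.
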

\begin{proof}
This is immediate from Corollary~\ref{maxc} since $\gr_1 B(C) \supset \tilde B(C)$ and $\tilde B(C)$ is maximal Poisson-commutative.
\end{proof}

\section{Some limits of Bethe subalgebras.}
\subsection{Closure of the family of subspaces in a vector space}
Let $\{U(m)\}_{m \in M}, U(m) \subset \bc^n$ be a family of vector subspaces of the dimension $k$ parameterized by a complex algebraic variety $M$ (we allow $U(m_1) = U(m_2)$ for different points $m_1,m_2 \in M$).  Suppose that the resulting map $\theta: M \to {\rm Gr}(k,n)$ is regular. Consider the closure $Z$ of $\theta(M)$ in $Gr(k,n)$. The variety $Z$ naturally parameterizes a family of subspaces extending $\{U(m)\}_{m \in M}$. 
According to the general principle (see e.g. \cite{serre}) the closure of an algebraic variety under a regular map with respect to Zariski topology coincides with its closure with respect to the analytic topology. We will use this fact in Subsection \ref{limit}.

\subsection{Closure of the family of subspaces in a graded space}
\label{limitgraded}
Suppose that we have a graded vector space with finite-dimensional graded components $V = \bigoplus_i V_i$ such that $\dim V_i = n_i$ and a family of subspaces of the form $U(m) = \bigoplus_i U_i(m)$ with fixed dimensions $\dim U_i = k_i$ parametrized by a complex algebraic variety $M$, $m \in M$. Suppose that all maps $$\theta_r: M \to \prod_{i=1}^r {\rm Gr}(k_i,\ n_i), m \mapsto (U_1(m), \ldots, U_r(m))$$ are regular. Let $Z_r$ be the closure of $\theta_r(M)$.  There are well-defined projections  $\zeta_r: Z_r \to Z_{r-1}$ for all $r \geqslant 1$. The inverse limit $Z = \varprojlim Z_r$ is well-defined as a pro-algebraic scheme and is naturally a parameter space for some family of graded vector subspaces of fixed dimension. Indeed, any point $z\in Z$ is a sequence $\{z_r\}_{r \in \mathbb{N}}$ where $z_r \in Z_r$ and  $U(z) = \bigoplus_i U(z_i) \subset V$. We note also that Poincar\'e series of $U(z)$ coincide with Poincar\'e series of $U(m), m \in M$. 


Suppose that we have a structure of an algebra on $V$ and suppose that a family of graded subspaces is a family of subalgebras. Then any $U(z), z \in Z$ is also a subalgebra because being a subalgebra is a Zariski-closed condition on the parameters. If all subalgebras of our family are commutative, then any $U(z), z \in Z$ is commutative as well, by the same argument.
\subsection{Closure of the family of subspaces in a filtered space}
\label{limitfilt}
Suppose that we have a filtration on vector space $V$ with finite-dimensional filtered components:
$$V_0 \subset V_1 \subset V_2 \subset \ldots,$$
where $\dim V_i = n_i$.
Suppose that we have a family of subspaces of the form $$U(m) = \bigcup_i U_i(m), U_i(m) = U(m) \cap V_i$$ with fixed dimensions $\dim U_i(m) = k_i$ parameterized by a complex algebraic variety $M$, $m \in M$.  Suppose that all maps $\theta_r: M \to \prod_{i=1}^r {\rm Gr}(k_i,\ n_i)$ are regular. As in the case of graded vector subspaces, we have the pro-algebraic scheme $Z$. This scheme naturally parameterizes some family of subspaces in $V$. 

Indeed, every $z_r$ is a point in $\prod_{i=1}^r {\rm Gr}(k_i, n_i)$ i.e. a collection of subspaces $U_i^{(r)}(z) \subset V_i$ such that $U_i^{(r)}(z)\subset U_{i+1}^{(r)}(z)$ for all $i<r$. Since $\zeta_r(z_r) = z_{r-1}$ we have $U_i^{(r)}(z)= U_{i-1}^{(r)}(z)$ for all $i<r$. We define the subspace corresponding to $z\in Z$ as $U(z):= \bigcup_{r=1}^{\infty} U_r^{(r)}(z)$. 


In fact $Z$ parameterizes a bit richer data, namely, subspaces along with a filtration $\{U_i(z)\}$ such that $U_i(z) \subset U(z) \cap V_i$ (i.e. the inclusion is not necessarily equality since the dimension of $U(z) \cap V_i$ could jump).
We define the Poincar\'e series of a subspace as the Poincar\'e series of the associated graded space.
In contrast with the graded vector space the Poincar\'e series of $U(z)$ is not necessary the same -- but always not smaller lexicographically than that of $U(m), m \in M$.

Again, as in the case of graded vector space if $V$ is an algebra and all $U(m), m \in M$ are (commutative) subalgebras then any $U(z), z \in Z$ is a (commutative) subalgebra. 

\subsection{Definition of limit Bethe subalgebras.}
\label{limit}
Let $C$ be an element of $G^{reg}$ or $T^{reg}$. 
Recall that the formula $\deg t_{ij}^{(r)} = r$ defines the filtration $F_1$ on $Y_V(\fg)$. Recall that $F_1^{(r)}Y_V(\fg)$ is a $r$-th filtered component.
Consider $B^{(r)}(C):= F_1^{(r)}Y_V(\fg) \cap B(C)$.  In the paper \cite{ir2} (in the course of the proof of Theorem 2.6) it is proved that the images of the coefficients of  $\tau_1(u,C), \ldots,$ $\tau_n(u,C)$ freely generate the subalgebra $\gr B(C)\subset \gr_1 Y_V(\fg)$. Hence the dimension $d(r)$ of $B^{(r)}(C)$ does not depend on $C$.
Now we can apply results of subsection \ref{limitfilt} to this situation and obtain pro-algebraic schemes $Z_G$ and $Z_T$ which parameterize some families of commutative subalgebras.

\begin{prop} 
For any $z \in Z$ subalgebra $B(z)$ is a commutative subalgebra of $Y_V(\fg)$. The Poincar\'e series of $B(z)$ is (lexicographically) not smaller that the Poincar\'e series series of $B(C)$ for $C\in G^{reg}$. We call a subalgebra of the form $B(z), z \in Z$ limit subalgebra.
\end{prop}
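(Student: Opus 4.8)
The plan is to recognize this Proposition as a direct application of the general construction of subsection~\ref{limitfilt} to the filtered algebra $V=Y_V(\fg)$ with the filtration $F_1$ and the family $U(C)=B(C)$ parameterized by $M=G^{reg}$ (respectively $M=T^{reg}$), producing $Z=Z_G$ (respectively $Z=Z_T$). Thus the work amounts to verifying the three hypotheses of that construction — finite-dimensionality of the filtered components, constancy of the dimensions $d(r)=\dim B^{(r)}(C)$ in $C$, and regularity of the classifying maps $\theta_r$ — and then reading off the two asserted conclusions.

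First I would record finite-dimensionality: by Corollary~\ref{th:F1} the Poincar\'e series of $\gr_1 Y_V(\fg)$ equals $\prod_{r\ge 1}(1-q^r)^{-\dim\fg}$, all of whose coefficients are finite, so each filtered component $F_1^{(r)}Y_V(\fg)$ is finite-dimensional. Constancy of $d(r)$ is precisely the input from \cite{ir2} recalled in Subsection~\ref{limit}: the coefficients of $\tau_1(u,C),\dots,\tau_n(u,C)$ freely generate $\gr_1 B(C)\subset\gr_1 Y_V(\fg)$ for regular $C$, so the Poincar\'e series of $B(C)$, and hence every dimension $d(r)=\dim B^{(r)}(C)$, is independent of $C\in G^{reg}$ (resp.\ $C\in T^{reg}$).

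The main point to establish is regularity of the maps $\theta_r\colon M\to\prod_{i=1}^r{\rm Gr}(d(i),n_i)$ with $n_i=\dim F_1^{(i)}Y_V(\fg)$. By Definition~\ref{bethe} the coefficients of $\tau_i(u,C)=\tr_{V(\omega_i,0)}\rho_i(C)T^i(u)$ are polynomial in the matrix entries of $\rho_i(C)$, hence depend regularly on $C\in G$; so do all monomials in these coefficients. Fixing, for each $r$, the finite collection of such monomials of $F_1$-degree at most $r$ gives finitely many regular maps $w_j\colon M\to F_1^{(r)}Y_V(\fg)$ whose images span $B^{(r)}(C)$ for every $C$. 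Because $d(r)$ is constant on the regular locus, the span of $\{w_j(C)\}$ has constant dimension $d(r)$ there, and a family of subspaces cut out by a spanning set of regular sections of constant rank defines a regular morphism to the Grassmannian. Verifying this constant-rank phenomenon — that the general-position behavior of $B^{(r)}(C)$ persists across all of $G^{reg}$, so that $\theta_r$ is everywhere defined and not merely rational — is the technical heart of the argument, and it is exactly where constancy of $d(r)$ enters.

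With the three hypotheses in place, subsection~\ref{limitfilt} produces the pro-algebraic scheme $Z=\varprojlim Z_r$ parameterizing the family of subspaces $B(z)$. Commutativity of each $B(z)$ is then immediate from the last paragraph of~\ref{limitfilt}: the condition that a point of $\prod_i{\rm Gr}(d(i),n_i)$ correspond to a commutative subalgebra of $Y_V(\fg)$ is Zariski-closed, and it holds on the dense subset $\theta_r(M)$ since every $B(C)$ with $C$ regular is commutative, hence it holds on the closure $Z_r$ and on $Z$. Finally, the lexicographic bound on the Poincar\'e series is the general statement for filtered spaces proved in~\ref{limitfilt}: passing to the closure can only make the associated graded dimensions of $B(z)$ move upward in the lexicographic order, never downward, so the Poincar\'e series of $B(z)$ is lexicographically not smaller than that of $B(C)$ for $C\in G^{reg}$.
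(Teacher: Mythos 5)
Your proposal is correct and follows exactly the route the paper intends: the Proposition is stated there without a separate proof precisely because it is a direct application of the general filtered-closure construction of Subsection~\ref{limitfilt}, with the constancy of $d(r)$ supplied by the freeness result from \cite{ir2} and commutativity/Poincar\'e-series bounds being the Zariski-closedness and lexicographic statements already established in that subsection. Your explicit verification that the maps $\theta_r$ are regular (via monomials in the coefficients of $\tau_i(u,C)$ spanning $B^{(r)}(C)$ with constant rank) fills in a step the paper leaves implicit, but it is the same argument, not a different one.
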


Following \cite{shuvalov} we want to explain a practical way to find some limit of one-parametric families of subalgebras and some elements belonging to them in the case of $Z_T$. 

By definition put $k_i = \dim B^{(i)}(C)$, $n_i = \dim F_1^{(i)} Y_V(\fg)$. Let $D$ be a small neighborhood of zero in $\bc$. Suppose that $c: D \to T$ is an analytic map and that the restriction  is a map $c^{\prime}: D \setminus \{0\} \to  T^{reg}$. Let $c_i^{\prime\prime} = \theta_i \circ c^{\prime}: D \to Gr(k_i,n_i)$. 
\begin{lem}
$c_i^{\prime\prime}$ (uniquely) extends to $D$ as an analytic map. 
\end{lem}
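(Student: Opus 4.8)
The plan is to read this as a removable-singularity statement for a holomorphic map into a compact (projective) variety. First I would fix the ambient data: $V_i := F_1^{(i)}Y_V(\fg)\cong\bc^{n_i}$ is finite dimensional, the Grassmannian $\mathrm{Gr}(k_i,n_i)$ is smooth and projective, and I embed it into $\bp^{N_i}$, $N_i=\binom{n_i}{k_i}-1$, by the Plücker embedding, whose image is the projectivization of the cone of decomposable vectors in $\wedge^{k_i}V_i$ --- a Zariski-closed set. The input I would use from \cite{ir2} (recalled just above) is that the coefficients of $\tau_1(u,C),\dots,\tau_n(u,C)$ freely generate $\gr_1 B(C)$; consequently there is a \emph{fixed} family $P_1,\dots,P_{k_i}$ of monomials in these coefficients, of $F_1$-degree at most $i$, whose values form a basis of $B^{(i)}(C)$ for every $C\in T^{reg}$. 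Each $P_l$ is a polynomial (regular) function of $C\in T$, since $\rho_j(C)$ has matrix entries regular on the torus $T$ while $T^j(u)$ is independent of $C$.

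Next I would pull these back along the curve and wedge them. Because $c\colon D\to T\cong(\bc^*)^r$ is holomorphic (in particular holomorphic \emph{at} $0$, where only regularity of $c(0)$ may fail), each $P_l(c(t))\in V_i$ is a holomorphic $V_i$-valued function on all of $D$. I then set
$$w(t):=P_1(c(t))\wedge\cdots\wedge P_{k_i}(c(t))\in \wedge^{k_i}V_i,$$
which is holomorphic on $D$. For $t\ne 0$ the vectors $P_l(c(t))$ form a basis of $B^{(i)}(c(t))$, so $w(t)\ne 0$ is decomposable and its class $[w(t)]$ is precisely the Plücker point of $c_i^{\prime\prime}(t)=\theta_i(c(t))$.

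Finally I would clear the vanishing at the origin. Let $d=\operatorname{ord}_0 w(t)\ge 0$ be the order of vanishing of this vector-valued holomorphic germ; then $t^{-d}w(t)$ is holomorphic on $D$ and nonzero at $0$. Since $t^{-d}w(t)$ is a scalar multiple of $w(t)$ for $t\ne0$, it is decomposable there, and decomposability is a closed condition, so $t^{-d}w(0)$ is a nonzero decomposable vector and defines a point of $\mathrm{Gr}(k_i,n_i)$. Thus $t\mapsto[t^{-d}w(t)]$ is a holomorphic map $D\to\mathrm{Gr}(k_i,n_i)$ agreeing with $c_i^{\prime\prime}$ on $D\setminus\{0\}$, i.e.\ the desired extension; uniqueness is immediate from the identity theorem, as any two analytic extensions coincide on the dense set $D\setminus\{0\}$. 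Conceptually this is nothing but the valuative criterion of properness for the proper variety $\mathrm{Gr}(k_i,n_i)$ applied to the local ring of holomorphic germs at $0$. The only genuine point to secure --- and the place I would be most careful --- is that the limiting subspace exists at all, i.e.\ that no essential singularity intervenes: this is exactly guaranteed by the holomorphy of the generators $P_l(c(t))$ at $0$ together with the compactness (properness) of the Grassmannian, even though the Bethe subalgebra itself degenerates as $t\to 0$.
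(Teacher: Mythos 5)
Your proof is correct and is essentially the paper's own argument: both pass to Plücker coordinates, clear the common power of $\varepsilon$ (your division by $t^{d}$ is the paper's "multiply by some power of $\varepsilon$ to make them Taylor series not all vanishing at the origin"), and use closedness of the Grassmannian to conclude. Your version merely supplies details the paper leaves implicit, namely a polynomial frame of monomials in the coefficients of the $\tau_j(u,C)$ showing the Plücker coordinates are holomorphic along $c$, and the closedness of decomposability ensuring the limit point lies in $\mathrm{Gr}(k_i,n_i)$.
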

\begin{proof}
The map $c_i''$ is uniquely determined by the collection of Pl\"ucker coordinates of $c_i''(\varepsilon)$, which are Laurent series of $\varepsilon$ up to proportionality. So we can multiply all these coordinates by some power of $\varepsilon$ to make them Taylor series not all vanishing at the origin. This gives us the desired analytic map $D\to Gr(k_i,n_i)$. 
\end{proof}
We define $\lim\limits_{\varepsilon \to 0} B(c(\varepsilon)) : = \bigcup_i c_i''(0)$.
Now let us give a practical way to find some elements in $\lim_{\varepsilon \to 0} B(c(\varepsilon))$. Suppose that $d: D \to F_1^{(i)} Y_V(\fg)$ is an analytic map such that $d(\varepsilon) \in B(c(\varepsilon))$ for all $\varepsilon \ne 0$ and $d(\varepsilon) \ne 0$ for small enough $\varepsilon$. Let $n_i = \dim F_1^{(i)} Y_V(\fg)$. We obtain a map $d^{\prime}: D \setminus \{0\} \to \bp^{n_i -1}$. Analogous to the previous Lemma we know that there exists the unique continuation $d^{\prime\prime}: D \to \bp^{n_i -1}$ of $d^{\prime}$. Practically, this means that we can divide $d$ by some power of $\varepsilon$ to make it nonzero at the origin. It is natural to call $d''(0)$ the \emph{limit} of $d(\varepsilon)$ as $\varepsilon\to0$, i.e. $d''(0)=\lim\limits_{\varepsilon\to0}d(\varepsilon)$.

\begin{lem} We have
$\lim\limits_{\varepsilon\to0}d(\varepsilon) \subset \lim\limits_{\varepsilon \to 0} B(c(\varepsilon))$.
\end{lem}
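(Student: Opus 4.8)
The plan is to reduce the statement to the fact that the incidence relation ``a line is contained in a subspace'' is closed inside the product of a projective space and a Grassmannian, and then to feed in the two analytic continuations already produced. Fix the index $i$ with $d(\varepsilon)\in F_1^{(i)}Y_V(\fg)$, and recall the two extensions at hand. On one side we have the analytic map $c_i''\colon D\to {\rm Gr}(k_i,n_i)$ furnished by the previous Lemma, which for $\varepsilon\ne0$ sends $\varepsilon\mapsto B^{(i)}(c(\varepsilon))=B(c(\varepsilon))\cap F_1^{(i)}Y_V(\fg)$ and whose value $c_i''(0)$ is by definition one of the graded summands of $\lim_{\varepsilon\to0}B(c(\varepsilon))=\bigcup_j c_j''(0)$. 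On the other side we have the analytic map $d''\colon D\to \bp^{n_i-1}$ extending the projectivization $d'\colon\varepsilon\mapsto[d(\varepsilon)]$, whose value $d''(0)$ is by definition $\lim_{\varepsilon\to0}d(\varepsilon)$, regarded as a line in $F_1^{(i)}Y_V(\fg)$.

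First I would record the key inclusion at nonzero parameter. For every $\varepsilon\ne0$ we have $d(\varepsilon)\in B(c(\varepsilon))\cap F_1^{(i)}Y_V(\fg)=B^{(i)}(c(\varepsilon))$, so the line $d'(\varepsilon)$ lies in the subspace $c_i''(\varepsilon)$. Equivalently, the map $\Theta\colon D\setminus\{0\}\to \bp^{n_i-1}\times {\rm Gr}(k_i,n_i)$, $\varepsilon\mapsto\bigl(d'(\varepsilon),c_i''(\varepsilon)\bigr)$, lands inside the incidence variety $I:=\{(L,W)\ :\ L\subset W\}$, which is a Zariski-closed (hence analytically closed) subvariety of the product.

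Next I would pass to the limit $\varepsilon\to0$. Both components of $\Theta$ extend analytically across the origin, to $d''$ and to $c_i''$ respectively, so $\Theta$ itself extends to an analytic, in particular continuous, map $\overline{\Theta}\colon D\to\bp^{n_i-1}\times {\rm Gr}(k_i,n_i)$ with $\overline{\Theta}(0)=\bigl(d''(0),c_i''(0)\bigr)$. Since $\overline{\Theta}(\varepsilon)=\Theta(\varepsilon)\in I$ for all $\varepsilon\ne0$ and $I$ is closed, continuity forces $\overline{\Theta}(0)\in I$, that is $d''(0)\subset c_i''(0)$. As $c_i''(0)$ is one of the summands of $\lim_{\varepsilon\to0}B(c(\varepsilon))$, this yields $\lim_{\varepsilon\to0}d(\varepsilon)=d''(0)\subset\lim_{\varepsilon\to0}B(c(\varepsilon))$, as claimed.

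The only genuinely careful point, which I expect to be the main obstacle, is to make sure that the limit of the pair is the pair of the separately computed limits, rather than something requiring a joint rescaling. This is immediate once one notes that both continuations are driven by the same parameter $\varepsilon$ and that the product of two analytic continuations is the analytic continuation of the product map (by uniqueness of analytic continuation); hence no scaling ambiguity survives once one works in projective, respectively Pl\"ucker, coordinates. Everything else is the standard observation that incidence of a line in a subspace is a closed condition.
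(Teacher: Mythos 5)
Your proof is correct and is essentially the paper's own argument: the paper likewise pairs the two analytic continuations into a single map $D\setminus\{0\}\to {\rm Gr}(k_i,n_i)\times\bp^{n_i-1}$ (phrased via a diagonal embedding), observes its image lies in the closed incidence locus, and concludes by continuity of the extended map that $d''(0)\subset c_i''(0)\subset\lim_{\varepsilon\to0}B(c(\varepsilon))$. The only difference is expository: your closing remark about joint versus separate limits is handled implicitly in the paper by working with the single product map from the start.
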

\begin{proof}
Let $\diag:D\setminus\{0\} \to D\setminus\{0\} \times D\setminus\{0\}$ be the diagonal embedding.
The image of the map $c_i'\times d'\circ\diag:D\setminus\{0\}\to Gr(k_i,n_i)\times \bp^{n_i -1}$ belongs to the closed subset given by the incidence condition. Hence the same is true for $c_i''\times d'':D\to Gr(k_i,n_i)\times \bp^{n_i -1}$. So $d''(0)\in \lim_{\varepsilon \to 0} B(c(\varepsilon))$.
\end{proof}

\subsection{Shift of argument subalgebras.}
Let $\fg$ be a reductive Lie algebra.  We identify $\fg$ with $\fg^*$ by means of non-degenerate invariant scalar product. To any $\chi\in\fg^*=\fg$ one can assign a Poisson-commutative subalgebra in $S(\fg)$ with respect to the standard Poisson bracket (coming from the universal enveloping algebra $U(\fg)$ by the PBW theorem).
Let $ZS(\fg)=S(\fg)^{\fg}$ be the center of $S(\fg)$ with
respect to the Poisson bracket. The algebra $A_{\chi}\subset S(\fg)$
generated by the elements $\partial_{\chi}^n\Phi$, where $\Phi\in
ZS(\fg)$, (or, equivalently, generated by central elements of
$S(\fg)=\bc[\fg^*]$ shifted by $t\chi$ for all $t\in\bc$) is
Poisson-commutative and has maximal possible transcendence degree. More precisely, we have the following

\begin{thm}\label{mf} ~\cite{mf} For regular semisimple $\chi\in\fg$ the algebra $A_{\chi}$
is a free commutative subalgebra in $S(\fg)$ with
$\frac{1}{2}(\dim\fg+\rk\fg)$ generators (this means that
$A_{\chi}$ is a commutative subalgebra of maximal possible
transcendence degree). One can take the elements
$\partial_{\chi}^n\Phi_k$, $k=1,\dots,\rk\fg$,
$n=0,1,\dots,\deg\Phi_k-1$, where $\Phi_k$ are basic
$\fg$-invariants in $S(\fg)$, as free generators of $A_{\chi}$.
\end{thm}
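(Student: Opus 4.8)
The plan is to prove the theorem in three stages: Poisson commutativity by the argument-shift (bi-Hamiltonian) method, a count of generators matched against the universal upper bound on the size of a Poisson-commutative subalgebra, and finally algebraic independence via a Jacobian/dimension computation at a generic point, which I expect to be the crux.

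First I would establish commutativity. On $S(\fg)=\bc[\fg]$ consider the Lie--Poisson bracket $\{f,g\}(x)=\langle x,[d_xf,d_xg]\rangle$ together with the constant bracket frozen at $\chi$, namely $\{f,g\}_\chi(x)=\langle \chi,[d_xf,d_xg]\rangle$. These form a compatible pencil: every combination $\{\cdot,\cdot\}+t\{\cdot,\cdot\}_\chi$ is again a (linear, affinely shifted) Poisson bracket, since its structure functions are $\langle x+t\chi,[\cdot,\cdot]\rangle$. A basic invariant $\Phi\in S(\fg)^\fg$ is a Casimir of $\{\cdot,\cdot\}$, hence its shift $\Phi(\,\cdot+t\chi)$ is a Casimir of $\{\cdot,\cdot\}+t\{\cdot,\cdot\}_\chi$. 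The Lenard--Magri recursion for this pencil --- the same bi-Hamiltonian mechanism used in Section~4, now applied to the finite-dimensional pair $(\{\cdot,\cdot\},\{\cdot,\cdot\}_\chi)$ with the translation $\exp(-t\,\partial_\chi)$ playing the role of $\varphi_{1,v}$ --- gives $\{\Phi(\,\cdot+s\chi),\Psi(\,\cdot+t\chi)\}=0$ for all $s,t$ and all basic invariants $\Phi,\Psi$. Expanding in $s,t$ yields $\{\partial_\chi^m\Phi,\partial_\chi^n\Psi\}=0$, so $A_\chi$ is Poisson-commutative.

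Next I would count. Each $\Phi_k$ is homogeneous of degree $d_k=m_k+1$, so $\partial_\chi^n\Phi_k=0$ for $n\ge d_k$ and the non-constant elements are exactly $\partial_\chi^n\Phi_k$ with $0\le n\le d_k-1$; their number is $\sum_k d_k=\sum_k(m_k+1)=\tfrac12(\dim\fg-\rk\fg)+\rk\fg=\tfrac12(\dim\fg+\rk\fg)$, using $\sum_k m_k=\tfrac12(\dim\fg-\rk\fg)$ (the number of positive roots). On the other hand, any Poisson-commutative subalgebra of $S(\fg)$ has transcendence degree at most $\tfrac12(\dim\fg+\rk\fg)$: on a generic coadjoint orbit (of dimension $\dim\fg-\rk\fg$) such a subalgebra is isotropic, hence has transcendence degree $\le\tfrac12(\dim\fg-\rk\fg)$ there, and the $\rk\fg$ Casimirs account for the transverse directions. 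Thus it remains only to show that the listed generators are algebraically independent.

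By the Jacobian criterion it suffices to check that the differentials $d_x(\partial_\chi^n\Phi_k)$ are linearly independent at a single (hence generic) $x\in\fg$. Since $(d\Phi_k)_{x+t\chi}=\sum_{n\ge0}\tfrac{t^n}{n!}\,d_x(\partial_\chi^n\Phi_k)$ is a polynomial in $t$, a Vandermonde argument identifies $\spann\{d_x(\partial_\chi^n\Phi_k)\}_{k,n}$ with $W:=\sum_{t}\fz_\fg(x+t\chi)$, using Kostant's fact that $(d\Phi_k)_y$ span $\fz_\fg(y)$ for regular $y$. Passing to Killing-orthogonal complements and using $\fz_\fg(y)^\perp=\mathrm{im}\,\ad y$, I get $W^\perp=\bigcap_t \mathrm{im}\,\ad(x+t\chi)$, so algebraic independence reduces to the single equality $\dim\bigcap_t \mathrm{im}\,\ad(x+t\chi)=\tfrac12(\dim\fg-\rk\fg)$ for generic $x$. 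This last dimension count is the main obstacle. The point is to analyze the pencil $\ad x+t\,\ad\chi$: regularity of $\chi$ makes $\ad\chi$ invertible on $\bigoplus_\alpha\fg_\alpha$ with kernel exactly $\fh=\fz_\fg(\chi)$, which forces the pencil to be of maximal (Kronecker) type for generic $x$, and the Jordan--Kronecker theory of the pencil then pins down the intersection dimension; equivalently one runs a root-space computation relative to $\fh$, with the model case $\fg=\fsl_n$ (where $(d\Phi_k)_y\propto y^k$ and the intersection is read off from the resolvent of $y$) already displaying the mechanism. Since full rank is an open condition and the upper bound fixes the maximal possible rank, it is enough to exhibit independence at one convenient pair $(x,\chi)$; I would finish by semicontinuity from such a pair, where regularity of $\chi$ is precisely what prevents the rank from dropping. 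Once independence is established, the $\tfrac12(\dim\fg+\rk\fg)$ generators freely generate a polynomial subalgebra, giving the theorem.
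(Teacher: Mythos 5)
The paper does not prove this theorem at all --- it is quoted from Mishchenko and Fomenko \cite{mf} as a known external result --- so your attempt has to be judged on its own merits rather than against an internal argument. Your first two stages are correct: the compatibility of the pencil $\{\cdot,\cdot\}+t\{\cdot,\cdot\}_\chi$, the identification of $\Phi(\,\cdot+t\chi)$ as Casimirs of its members, the resulting commutativity $\{\partial_\chi^m\Phi,\partial_\chi^n\Psi\}=0$, and the count $\sum_k(m_k+1)=\tfrac12(\dim\fg+\rk\fg)$ checked against the isotropy upper bound are all the standard argument-shift reasoning, correctly executed. Your reduction of algebraic independence, via the Vandermonde argument and Kostant's theorem, to the single equality $\dim\sum_t\fz_\fg(x+t\chi)=\tfrac12(\dim\fg+\rk\fg)$ for generic $x$ is also correct and is the right way to set up the crux.

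But the proof of that equality --- which you yourself flag as the main obstacle --- is missing, not merely compressed. The claim that ``regularity of $\chi$ forces the pencil $\ad x+t\,\ad\chi$ to be of maximal (Kronecker) type for generic $x$'' is a restatement of the desired conclusion, not a derivation of it, and it omits the one geometric ingredient that actually makes it true: purity of Kronecker type is equivalent to the rank of $\langle\lambda x+\mu\chi,[\cdot,\cdot]\rangle$ being constant over the whole projective line, i.e.\ to \emph{every} element $\lambda x+\mu\chi$ being regular. Regularity of $\chi$ controls only the point at infinity; for the finite points one must use that the singular set of $\fg$ has codimension at least $2$ (in fact $3$), so that for a fixed regular $\chi$ and generic $x$ the affine line $x+\bc\chi$ avoids it. This codimension fact appears nowhere in your sketch, and without it ``generic $x$'' buys you nothing. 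Moreover, even granting line-avoidance, the two Jordan--Kronecker facts you invoke as a black box (no Jordan blocks when the rank is constant on $\bp^1$, and the sum of kernels of a pure Kronecker pencil having dimension $\tfrac12(\dim V+\mathrm{corank})$) are the substantive linear-algebraic content of this step and would need to be proved or precisely cited. Finally, your fallback is strictly weaker than the theorem: exhibiting one good pair $(x_0,\chi_0)$ and using lower semicontinuity of rank shows independence only for $(x,\chi)$ in a Zariski-open set, i.e.\ for \emph{generic} $\chi$, whereas the statement is for \emph{every} regular semisimple $\chi$; the closing clause ``regularity of $\chi$ is precisely what prevents the rank from dropping'' asserts exactly what remains unproved.
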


\begin{thm} (\cite{ryb06})
For any regular semisimple $\chi \in \fg$ there exist a lifting $\mathcal{A}_{\chi} \subset U(\fg)$, i.e. a commutative subalgebra $\mathcal{A}_{\chi} \subset U(\fg)$ such that $\gr \mathcal{A}_{\chi} = A_{\chi}$. Moreover, this lifting is unique for generic regular $\chi$.
\end{thm}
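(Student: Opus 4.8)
Since the statement is recalled from \cite{ryb06}, the plan is to reconstruct that argument, splitting it into \emph{existence} of a commutative lift and its \emph{uniqueness} for generic $\chi$. For existence I would produce $\mathcal{A}_\chi$ directly from the critical-level center, exactly as the universal Gaudin subalgebra $\mathcal{A}_\fg$ of Section 4 is produced from the Feigin--Frenkel center $\mathfrak z(\hat\fg)$: namely, using the homomorphism attached to the Gaudin connection with a single second-order (irregular) pole, $\partial_z + \chi + z^{-1}\sum_a x_a\otimes x_a$, sending $\mathfrak z(\hat\fg)$ to a subalgebra of $U(\fg)$. The point is that commutativity of the image is inherited from commutativity of the center at the critical level, and \emph{not} from any algebra homomorphism $U(\fg[t])\to U(\fg)$: the naive specialization $x[0]\mapsto x,\ x[1]\mapsto\langle\chi,x\rangle,\ x[r]\mapsto 0\ (r\ge2)$ is not even a Lie homomorphism (it fails already on $[x[0],y[1]]$), which is why existence for arbitrary simple $\fg$ genuinely requires the critical-level machinery rather than a Yangian evaluation as in \cite{nazol}.

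The next step is to compute symbols. To leading order the image of an invariant current $\Phi_l$ is $\Phi_l(\chi+z^{-1}x)$ with $x=\sum_a x_a\otimes x_a$; by homogeneity of $\Phi_l$ the coefficient of $z^{-k}$ is a scalar multiple of $\partial_\chi^{\deg\Phi_l-k}\Phi_l$, i.e. of a Mishchenko--Fomenko generator. Hence the symbols of the generators of $\mathcal{A}_\chi$ generate $A_\chi$. Since $\gr\mathcal{A}_\chi$ is automatically Poisson-commutative and contains $A_\chi$, while for regular semisimple $\chi$ the algebra $A_\chi$ is free of transcendence degree $\frac12(\dim\fg+\rk\fg)$ and is maximal Poisson-commutative (Theorem \ref{mf}), maximality forces $\gr\mathcal{A}_\chi=A_\chi$. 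This settles existence.

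For uniqueness I would mirror the centralizer argument of Propositions \ref{centr1}, \ref{centr2} and \ref{lift}. Here $A_\chi$ arises from the compatible pencil on $S(\fg)$ spanned by the Lie--Poisson bracket and the frozen bracket $\{x,y\}_\chi=\langle\chi,[x,y]\rangle$, and for $\chi$ outside a countable union of proper subvarieties a generic element $H_\chi\in A_\chi$ has Poisson-centralizer exactly $A_\chi$ (this is where genericity first enters, via maximality of $A_\chi$). Then any commutative lift $\mathcal{A}$ of $A_\chi$ contains some lift $\widehat H_\chi$ of $H_\chi$ and, by the size count, must coincide with the centralizer of $\widehat H_\chi$ in $U(\fg)$; so the uniqueness of the lift reduces to showing that this centralizer does not depend on the choices made.

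The main obstacle is precisely this last point. In Proposition \ref{lift} one works inside $U(\fg[t])^\fg$, and full $\fg$-invariance pins down the lift of the quadratic element $\Omega_\fg$ up to a scalar and an additive constant, neither of which affects its centralizer. Here only $\fz_\fg(\chi)=\fh$-invariance is available, so $\widehat H_\chi$ is determined only modulo a large space of lower-degree corrections, and a priori the centralizer of $\widehat H_\chi$ can depend on them. The hard part is to show that, for generic $\chi$, this lower-order ambiguity is immaterial --- equivalently, that the obstruction to uniqueness, which is naturally a first Poisson-cohomology class of $A_\chi$ with values in $S(\fg)/A_\chi$, vanishes. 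Establishing this vanishing, and pinning down the exceptional countable family of $\chi$, is the technical core of \cite{ryb06}, and I expect it to be the step demanding the most work and the reason only \emph{generic} regular semisimple $\chi$ are admitted.
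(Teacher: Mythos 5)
The paper itself contains no proof of this statement: it is recalled verbatim from \cite{ryb06}, so your proposal can only be measured against that reference and against the analogous arguments the paper does carry out (Proposition \ref{lift}, Proposition \ref{centr}, Corollary \ref{maxc}). Your existence half is a faithful reconstruction of the route of \cite{ryb06}, and it matches the paper's own description of it: $\mathcal{A}_\chi$ is the image of the Feigin--Frenkel center under the map attached to the connection with an irregular singularity, commutativity is inherited from the critical-level center, and your observation that the naive specialization $x[0]\mapsto x,\ x[1]\mapsto\langle\chi,x\rangle$ is not a Lie homomorphism is correct and is exactly why this machinery is needed. One inaccuracy: you attribute ``maximal Poisson-commutativity'' of $A_\chi$ to Theorem \ref{mf}, but that theorem only gives maximal transcendence degree, which by itself does not exclude a strictly larger Poisson-commutative algebra that is algebraic over $A_\chi$; to close the existence argument you need either the genuine maximality of $A_\chi$ for regular semisimple $\chi$ (Tarasov's theorem), or an algebraic-closedness argument for $A_\chi$ in $S(\fg)$ in the spirit of Lemma \ref{514}(2).

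The genuine gap is in the uniqueness half, and you flag it yourself: you reduce uniqueness to the claim that the centralizer of a lift $\widehat H_\chi$ of a single generic element of $A_\chi$ is insensitive to lower-order corrections, recast this as a Poisson-cohomology vanishing, and leave it unproven. This is not where the difficulty lies, and the step closes by an elementary observation your setup misses. For regular semisimple $\chi\in\fh$ the \emph{linear} part of $A_\chi$ is exactly $\fh$ (by Kostant, the differentials $d\Phi_l|_\chi$ span $\fz_\fg(\chi)=\fh$), so any commutative lift $\mathcal{A}$ contains $\fh$ and hence lies in $Z_{U(\fg)}(\fh)=U(\fg)^{\fh}$. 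Consequently a lift of a quadratic generator $Q_h=\sum_{\alpha\in\Phi^+}\frac{(\alpha,h)}{(\alpha,\chi)}e_\alpha e_{-\alpha}$ can differ from $Q_h$ only by an element of $\fh\oplus\bc$, and such corrections commute with every $\ad\fh$-invariant element, so they are invisible to centralizers computed inside $U(\fg)^{\fh}$. Therefore every commutative lift satisfies $\mathcal{A}\subset Z_{U(\fg)^{\fh}}(\{Q_h\}\cup\fh)$, which for generic $\chi$ equals $\mathcal{A}_\chi$ by the centralizer theorem quoted immediately after the present statement; equality then follows by comparing associated graded algebras. Thus the hard ingredient of \cite{ryb06} is that centralizer theorem --- proved by a degeneration argument that the present paper mirrors in Proposition \ref{centr} (the automorphisms $\varphi_s$ and the limit element $h[0]$) --- and not the cohomological vanishing you posit. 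Working with the whole quadratic part together with $\fh$, rather than with one generic element $H_\chi$, is precisely what eliminates the ``large space of lower-degree corrections'' that blocks your argument.
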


\begin{thm} (\cite{ryb06})
For generic (i.e. belonging to some complement of countably many proper closed subsets in $\fh^{reg}$) regular semisimple $\chi \in \fg$, the subalgebra $\mathcal{A}_{\chi}$ is the centralizer of its quadratic part which is the linear span of the elements $\sum_{\alpha \in \Phi^+} \frac{(\alpha, h)}{(\alpha,\chi)} e_{\alpha} e_{-\alpha}$ for all $h\in\fh$.
\end{thm}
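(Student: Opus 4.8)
The plan is to deduce the statement from its classical (Poisson) analogue together with the uniqueness of the lift, exactly in the spirit of Propositions~\ref{centr}, \ref{centr1} and~\ref{lift}. Write $\mathcal{Q}\subset U(\fg)$ for the linear span of the quadratic Hamiltonians $\mathcal{H}_h:=\sum_{\alpha\in\Phi^+}\frac{(\alpha,h)}{(\alpha,\chi)}e_\alpha e_{-\alpha}$, $h\in\fh$, and let $\overline{\mathcal{H}}_h\in S(\fg)$ be their symbols, $\overline{\mathcal{Q}}=\spann\{\overline{\mathcal{H}}_h\}$. First I would identify $\overline{\mathcal{Q}}$ with the degree-two component of the classical Mishchenko--Fomenko algebra $A_\chi=\gr\mathcal{A}_\chi$: since $A_\chi$ is generated by the homogeneous elements $\partial_\chi^n\Phi_k$, it is a graded subalgebra of $S(\fg)$, and a direct expansion of the shifted quadratic Casimir together with the top derivatives $\partial_\chi^{\deg\Phi_k-2}\Phi_k$ shows that, modulo $S^2(\fh)$, their symbols span exactly the $\overline{\mathcal{H}}_h$; genericity of $\chi$ makes $h\mapsto\overline{\mathcal{H}}_h$ injective, so $\dim\overline{\mathcal{Q}}=\rk\fg$. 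Two formal facts then cost nothing: since $\mathcal{Q}\subset\mathcal{A}_\chi$ and $\mathcal{A}_\chi$ is commutative, $\mathcal{A}_\chi\subseteq Z_{U(\fg)}(\mathcal{Q})$; and for $a$ commuting with $\mathcal{Q}$ one has $\{\gr a,\overline{\mathcal{H}}_h\}=\gr[a,\mathcal{H}_h]=0$, whence $\gr Z_{U(\fg)}(\mathcal{Q})\subseteq Z_{S(\fg)}(\overline{\mathcal{Q}})$.

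The heart of the argument is the classical identity $Z_{S(\fg)}(\overline{\mathcal{Q}})=A_\chi$. The inclusion $\supseteq$ is immediate, because $A_\chi$ is Poisson-commutative and contains $\overline{\mathcal{Q}}$. For the reverse inclusion I would first note that $A_\chi$ is algebraically closed in $S(\fg)$: the argument of Lemma~\ref{514}(2) applies verbatim, $S(\fg)$ having no zero divisors. Hence it suffices to bound the transcendence degree, namely to show $\mathrm{tr.deg}\,Z_{S(\fg)}(\overline{\mathcal{Q}})\le\tfrac12(\dim\fg+\rk\fg)=\mathrm{tr.deg}\,A_\chi$ (Theorem~\ref{mf}); combined with $A_\chi\subseteq Z_{S(\fg)}(\overline{\mathcal{Q}})$ and algebraic closedness this forces every element of the centralizer to be algebraic over $A_\chi$, hence to lie in $A_\chi$. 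The transcendence-degree bound is a generic-point computation: at a generic $\xi\in\fg^*$ the differentials of functions in $Z_{S(\fg)}(\overline{\mathcal{Q}})$ lie in the skew-orthogonal complement, with respect to the Kirillov--Kostant bivector at $\xi$, of the span of the $d_\xi\overline{\mathcal{H}}_h$, and one checks that for generic $\chi$ this span has the maximal rank compatible with the Mishchenko--Fomenko count. This rank estimate, which is where the genericity of $\chi$ is indispensable, is the step I expect to be the main obstacle; it can be organized exactly as in the end of the proof of Proposition~\ref{centr}, by restricting to the Cartan via an $\fh$-invariant projection $\psi\colon S(\fg)\to S(\fh)$ and degenerating the family $\overline{\mathcal{H}}_h$ toward a principal Cartan direction, so that a hypothetical transcendental centralizer (normalized to $\psi(a)=0$) would be forced to satisfy $\psi(a)\neq0$, a contradiction.

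Granting the classical identity, the theorem follows formally, by the same mechanism as Proposition~\ref{lift}. Indeed, combining the easy inclusion $\mathcal{A}_\chi\subseteq Z_{U(\fg)}(\mathcal{Q})$ with the symbol inclusion gives
$$A_\chi=\gr\mathcal{A}_\chi\subseteq \gr Z_{U(\fg)}(\mathcal{Q})\subseteq Z_{S(\fg)}(\overline{\mathcal{Q}})=A_\chi,$$
so every inclusion is an equality; in particular $\gr Z_{U(\fg)}(\mathcal{Q})=\gr\mathcal{A}_\chi$. A filtered inclusion of subspaces inducing an isomorphism of associated graded spaces is an equality, whence $\mathcal{A}_\chi=Z_{U(\fg)}(\mathcal{Q})$, which is the assertion of the theorem. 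Thus all the quantum subtlety is pushed into the commutative world, and the only genuinely hard input is the classical transcendence-degree bound for $Z_{S(\fg)}(\overline{\mathcal{Q}})$ under the genericity hypothesis on $\chi$.
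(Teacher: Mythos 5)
First, a point of reference: the paper does not prove this theorem at all --- it is imported from \cite{ryb06} (reference [R2]) and used as a black box, so there is no internal proof to compare against; your argument has to stand on its own. Its overall architecture is indeed the standard one and is sound in outline: granting the classical identity $Z_{S(\fg)}(\overline{\mathcal{Q}})=A_\chi$, the chain $A_\chi=\gr\mathcal{A}_\chi\subseteq\gr Z_{U(\fg)}(\mathcal{Q})\subseteq Z_{S(\fg)}(\overline{\mathcal{Q}})=A_\chi$ together with the filtered-versus-graded comparison does force $\mathcal{A}_\chi=Z_{U(\fg)}(\mathcal{Q})$. (The inclusion $\mathcal{Q}\subset\mathcal{A}_\chi$ needs slightly more care than you give it --- from $\overline{\mathcal{H}}_h\in A_\chi$ you only get some lift in $\mathcal{A}_\chi$ differing from $\mathcal{H}_h$ by an affine-linear term, and you must use $\fh\subset\mathcal{A}_\chi$ plus weight-zero considerations to conclude --- but this is fixable.)

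The genuine gap is exactly where you locate it, namely the classical identity, and the method you propose for it cannot work. The quadratic part $\overline{\mathcal{Q}}$ consists of only $\rk\fg$ elements, so at a generic point $\xi\in\fg^*$ the space $\pi_\xi^\#\bigl(\spann\{d_\xi\overline{\mathcal{H}}_h\}\bigr)$ has dimension at most $\rk\fg$, and the skew-orthogonal complement constraining $d_\xi a$ for $a\in Z_{S(\fg)}(\overline{\mathcal{Q}})$ has dimension at least $\dim\fg-\rk\fg$. Hence the best transcendence-degree bound any pointwise rank estimate can yield is $\dim\fg-\rk\fg$, which strictly exceeds the required Mishchenko--Fomenko count $\tfrac12(\dim\fg+\rk\fg)$ as soon as $\dim\fg>3\rk\fg$ (already for $\fsl_3$: $6>5$). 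This is precisely why the theorem is delicate: by naive dimension count the centralizer of finitely many functions is far bigger than $A_\chi$, and genuinely global structure must be used. Your fallback --- ``organize exactly as in the end of the proof of Proposition~\ref{centr}'' --- does not transfer verbatim either: that proof lives in $S(\fg[t])$ and rests on the second compatible bracket $\{\cdot,\cdot\}_1$, on Proposition~\ref{centr2} (itself quoted from \cite{r1}), on the automorphisms $\varphi_s$ with the limit element $h[0]$, and on the slice Lemma~\ref{514}; in the finite-dimensional setting the relevant bi-Hamiltonian pencil is instead the pair (Kirillov--Kostant bracket, bracket frozen at $\chi$), and the corresponding limit and semicontinuity analysis has to be redone from scratch. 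That analysis, carried out degree by degree, is also what produces the genericity hypothesis ``complement of countably many proper closed subsets'' --- one open condition per graded component --- a feature your sketch never accounts for and which no single generic-point computation can generate. As written, the hard half of the theorem is assumed rather than proven.
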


\subsection{Certain limits of Bethe subalgebras.}
Let $E \in G$ be the identity element.
\begin{thm}
Let $C(\varepsilon) = \exp(\varepsilon \chi), \chi \in \fh$ and $C(\varepsilon) \in T^{reg}$ if $\varepsilon \ne 0$.
Then $$\lim_{\varepsilon \to 0} B(C(\varepsilon)) = B(E) \otimes_{Z(U(\fg))} \mathcal{A}_{\chi}$$ for generic $\chi\in\fh^{reg}$.
\end{thm}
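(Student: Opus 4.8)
The plan is to read the statement as the special case $C_0=E$ of the general degeneration, where $\fz_{\fg}(E)=\fg$, so that by Theorem \ref{result} the subalgebra $B(E)$ satisfies $\gr_2 B(E)=\mathcal{A}_{\fg}$ and $\mathcal{A}_\chi\subset U(\fg)$ is the full Mishchenko--Fomenko quantization. Write $L:=\lim_{\varepsilon\to0}B(C(\varepsilon))$ and $\mathcal{B}:=B(E)\otimes_{Z(U(\fg))}\mathcal{A}_\chi$. I would first check that $\mathcal{B}$ is a well-defined commutative subalgebra of $Y(\fg)$: by Corollary \ref{max1} we have $B(E)\subset Y(\fg)^{\fg}$, so $B(E)$ centralizes $\fg$ and hence all of $U(\fg)\supset\mathcal{A}_\chi$; moreover both factors contain $Z(U(\fg))$ (for $\mathcal{A}_\chi$ these are the undifferentiated invariants $\Phi_k$, and for $B(E)$ they sit inside $\gr_2 B(E)=\mathcal{A}_{\fg}$), so the amalgamated product makes sense and is commutative.

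Next I would verify that the $F_1$-Poincar\'e series of $\mathcal{B}$ agrees with that of a regular Bethe subalgebra $B(C)$, $C\in T^{reg}$. Under $F_1$ the copy $\fg=\fg[0]\subset Y(\fg)$ sits in degree $1$, so the degree-one generators $\partial_\chi^{m_k}\Phi_k$ of $\mathcal{A}_\chi$ lie in $F_1$-degree $1$ and the central $\Phi_k$ in degree $m_k+1$; hence $\mathcal{A}_\chi$ contributes $\prod_k\prod_{d=1}^{m_k+1}(1-q^d)^{-1}$, while by Proposition \ref{size} together with Corollary \ref{max1} the subalgebra $B(E)$ contributes $\prod_k\prod_{r\ge m_k+1}(1-q^r)^{-1}$. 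Dividing the product of these by the series $\prod_k(1-q^{m_k+1})^{-1}$ of the common center yields exactly $\prod_{r\ge1}(1-q^r)^{-\rk\fg}$, the series of a regular Bethe subalgebra.

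The lower bound $\mathcal{B}\subseteq L$ splits into two parts. That $B(E)\subseteq L$ is immediate from the practical lemma of Subsection \ref{limit}, since $\tau_i(u,C(\varepsilon))$ is analytic in $\varepsilon$ and specialises at $\varepsilon=0$ to the generators $\tau_i(u,E)$ of $B(E)$. The crucial point is to produce the quadratic generators $q_h=\sum_{\alpha\in\Phi^+}\frac{(\alpha,h)}{(\alpha,\chi)}e_\alpha e_{-\alpha}$ of $\mathcal{A}_\chi$ inside $L$. For this I would expand the $F_1$-degree-two part of $\tau_i(u,C(\varepsilon))$: since $\Ad C(\varepsilon)$ acts on $e_{\pm\alpha}$ by $e^{\pm\varepsilon(\alpha,\chi)}$, the coefficient of $e_\alpha e_{-\alpha}$ is a function of $\varepsilon$ whose leading behaviour, after removing the part that limits into $B(E)$ and rescaling by the appropriate power of $\varepsilon$, is controlled by factors $(e^{\varepsilon(\alpha,\chi)}-1)^{-1}\sim(\varepsilon(\alpha,\chi))^{-1}$; taken along a generic line the rescaled limit lands on $q_h$, which the practical lemma then places in $L$. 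Consequently the $F_2$-associated graded $\gr_2 L\subset U(\fg[t])$ is a commutative subalgebra containing $\mathcal{A}_{\fg}=\gr_2 B(E)$ and all the $q_h$.

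I would close the argument by a centralizer/maximality comparison at the level of $\gr_2$. Combining the maximality of the universal Gaudin subalgebra (the quantum counterpart of Propositions \ref{centr1} and \ref{centr}) with the theorem of \cite{ryb06} realising $\mathcal{A}_\chi$, for generic $\chi$, as the centralizer of the span of the $q_h$, one identifies $\gr_2\mathcal{B}=\mathcal{A}_{\fg}\otimes_{Z(U(\fg))}\mathcal{A}_\chi$ with the centralizer of $\mathcal{A}_{\fg}\cup\{q_h\}_{h\in\fh}$ in $U(\fg[t])$. Since $\gr_2 L$ is commutative and contains $\mathcal{A}_{\fg}$ and all $q_h$, it is forced into this centralizer, giving $\gr_2 L\subseteq\gr_2\mathcal{B}$; the opposite inequality of Poincar\'e series is the lower bound from the limit construction of Subsection \ref{limitfilt}, so the two series coincide, $\gr_2 L=\gr_2\mathcal{B}$, and in $F_2$-degree zero this reads $L\cap U(\fg)=\mathcal{A}_\chi$, whence $\mathcal{B}\subseteq L$ and finally $L=\mathcal{B}$. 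I expect the main obstacle to be the explicit degree-two limit producing the $q_h$: controlling the cancellation of the $B(E)$-part, extracting the coefficients $\frac{(\alpha,h)}{(\alpha,\chi)}$ uniformly in $\alpha$, and checking that the genericity hypothesis on $\chi$ is precisely what secures both $(\alpha,\chi)\ne0$ and the nondegeneracy underlying the centralizer characterization of $\mathcal{A}_\chi$.
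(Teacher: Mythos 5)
Your strategy is in outline the same as the paper's: get $B(E)\subseteq L$ and the rescaled quadratic limits $q_h$ into $L$ (the paper simply quotes the explicit quadratic part of $B(C)$ from \cite{ilin}, which is what your $\varepsilon$-expansion would re-derive), then trap $L$ by a centralizer argument based on the theorem of \cite{ryb06}, and finish with a Poincar\'e series count. The genuine gap is in your centralizer step. You assert that the maximality of $\mathcal{A}_{\fg}$ in $U(\fg[t])^{\fg}$ together with the characterization of $\mathcal{A}_\chi$ as the centralizer of $\{q_h\}$ in $U(\fg)$ ``identifies'' the centralizer of $\mathcal{A}_{\fg}\cup\{q_h\}$ in the whole of $U(\fg[t])$ with $\mathcal{A}_{\fg}\otimes_{Z(U(\fg))}\mathcal{A}_\chi$. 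Those two facts only control the parts of that centralizer lying inside $U(\fg[t])^{\fg}$ and inside $U(\fg)$ respectively; a priori the centralizer (and hence $\gr_2 L$) could contain elements outside the product $U(\fg[t])^{\fg}\cdot U(\fg)$, and nothing in your argument excludes this. The missing ingredient is exactly the paper's Lemma~\ref{le:knop}: by Knop's Harish-Chandra-type theorem \cite[Main Theorem (d)]{knop}, combined with the freeness statement from \cite[Lemma 4.8]{ir}, the centralizer of $Z(U(\fg))$ is $Y(\fg)^{\fg}\otimes_{Z(U(\fg))}U(\fg)$ (and correspondingly $U(\fg[t])^{\fg}\otimes_{Z(U(\fg))}U(\fg)$ after taking $\gr_2$). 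Only inside this tensor product does the basis-wise reduction to the two known centralizer statements work; the same freeness is also what your Poincar\'e series computation for $\mathcal{B}$ silently uses.

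A second, more technical defect: your closing count mixes the two filtrations. The lower bound coming from the limit construction is a bound on the $F_1$-Poincar\'e series of $L$, whereas your inclusion $\gr_2 L\subseteq\gr_2\mathcal{B}$ lives on the $F_2$ side, whose filtered components (e.g. $F_2^{(0)}Y(\fg)=U(\fg)$) are infinite-dimensional, so there is no coefficientwise series comparison available there; translating between the two pictures needs the bigraded formalism of Section 2 and runs into the caveat of Proposition~\ref{subgr} that the associated graded of a subspace taken in the two orders need not coincide. The paper avoids all of this by running the centralizer argument in $Y(\fg)$ itself: $L$ is commutative and contains $B(E)\supset Z(U(\fg))$, hence $L\subseteq Y(\fg)^{\fg}\otimes_{Z(U(\fg))}U(\fg)$ by Lemma~\ref{le:knop}, hence $L\subseteq B(E)\otimes_{Z(U(\fg))}U(\fg)$ by maximality of $B(E)$ in $Y(\fg)^{\fg}$ (Corollary~\ref{max1}), hence $L\subseteq B(E)\otimes_{Z(U(\fg))}\mathcal{A}_\chi$ by \cite{ryb06}; the final comparison is then purely between $F_1$-series, where the limit construction supplies the reverse inequality. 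If you restructure your argument this way, keeping your (correct) computation of the $F_1$-series of $\mathcal{B}$ and your quadratic-limit step, it becomes a complete proof essentially identical to the paper's.
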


\begin{proof}
According to \cite{ilin} the quadratic part of Bethe subalgebra contains the following elements:

$$\sigma_i(C) = 2J(t_{\omega_i}) - \sum_{\alpha \in \Phi^+} \dfrac{e^\alpha(C)+1}{e^\alpha(C)-1} (\alpha, \alpha_i)x_{\alpha} x_{\alpha}^- \in Y(\fg), i = 1, \ldots, \rk \fg.$$
Here $J(t_{\omega_i})$ is an element of $Y(\fg)$ which does not depend on $C$. In the limit $\varepsilon \to 0$, the leading term has the form
$$\sum_{\alpha \in \Phi^+} \frac{2(\alpha, \alpha_i)}{(\alpha,\chi)} x_{\alpha} x_{\alpha}^- ,$$
i.e. the quadratic part of shift of argument subalgebra $\mathcal{A}_{\chi}$ (see \cite{vinberg}). As we state before for generic $\chi$ shift of argument subalgebra $\mathcal{A}_{\chi}$ is a centralizer of its quadratic part.

\begin{lem}\label{le:knop}
\begin{enumerate}
\item Suppose that $\fg$ is a reductive Lie algebra, $\fg_0$ -- a reductive subalgebra of $\fg$. Then the subalgebras $Y(\fg)^{\fg_0}$ and $U(\fg_0)$ in $Y(\fg)$ are both free $U(\fg_0)^{\fg_0}$-modules. Moreover, the product of these subalgebras in $Y(\fg)$ is:
$Y(\fg)^{\fg_0} \cdot U(\fg_0) \simeq Y(\fg)^{\fg_0} \otimes_{U(\fg_0)^{\fg_0}} U(\fg_0);$
\item $\fz_{Y(\fg)}(ZU(\fg_0)) = Y(\fg)^{\fg_0} \otimes_{U(\fg_0)^{\fg_0}} U(\fg_0).$
\end{enumerate}
\end{lem}
\begin{proof}
1) Let us consider the associated bigraded algebra with respect to the filtrations $F_1, F_2$. Then $\gr_{21} Y(\fg)^{\fg_0} = S(\fg[t])^{\fg_0}, \gr_{21} U(\fg_0)^{\fg_0} = S(\fg_0)^{\fg_0}, \gr_{21} U(\fg_0) = S(\fg_0)$. 

Let $f_n(t)$ be a polynomial of degree $n$ with $n$ pairwise different roots. Consider the quotient $S_n(\fg[t]) : = S(\fg[t]) / f_n(t)$.
From \cite[Lemma 4.8]{ir} it follows that $$S_n(\fg[t])^{\fg_0} \cdot S(\fg_0)  \simeq S_n(\fg[t])^{\fg_0} \otimes_{S(\fg_0)^{\fg_0}} S(\fg_0).$$

We see that the statement of Lemma holds for any filtered component and hence for the whole algebra.

2) It follows from \cite[Main Theorem (d)]{knop} if we consider the associated graded with respect to filtration $F_2$ and follow the proof of first statement.
\end{proof}

In our limit we have $B(E) \subset Y(\fg)^{\fg}$ and it is a maximal subalgebra of $Y(\fg)^{\fg}$ (Lemma \ref{max1}). Moreover, $Z(U(\fg))$ lie in $B(E)$. From Lemma~\ref{le:knop} we see that the limit subalgebra lies in $B(E) \cdot U(\fg) \simeq B(E)\otimes_{ZU(\fg)}U(\fg)$. But then it should lie in the centralizer of the quadratic part of the Bethe subalgebra in the latter tensor product i.e. in $B(E) \cdot \mathcal{A}_{\chi} \simeq B(E)\otimes_{ZU(\fg)}\mathcal{A}_{\chi}$.

Subalgebra $B(E)\otimes_{ZU(\fg)}\mathcal{A}_{\chi}$ has the same Poincar\'e series as $B(C), C \in T^{reg}$. Then the limit coincides with   $B(E)\otimes_{ZU(\fg)}\mathcal{A}_{\chi}$.
\end{proof}

\begin{thm}
Let $C(\varepsilon) = C_0 \exp(\varepsilon \chi), C_0 \in T \setminus T^{reg}$ with $\chi \in \fh\subset \fz_\fg(C_0)$ being a generic regular semisimple element of the centralizer of $C_0$.
Then $$\lim_{\varepsilon \to 0} B(C(\varepsilon)) = B(C_0) \otimes_{Z(U(\fz_{\fg}(C_0)))} \mathcal{A_{\chi}},$$
where $\mathcal{A_{\chi}} \subset U(\fz_{\fg}(C_0))$ is the quantum shift of argument subalgebra corresponding to $\chi$.

\end{thm}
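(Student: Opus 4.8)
The plan is to mirror the proof of the preceding theorem (the case $C_0=E$), replacing $\fg$ throughout by the reductive centralizer $\fz_\fg(C_0)$ and keeping careful track of which roots produce singular coefficients as $\varepsilon\to 0$. The starting point is again the explicit formula from \cite{ilin} for the quadratic part of $B(C)$, namely the elements
$$\sigma_i(C) = 2J(t_{\omega_i}) - \sum_{\alpha \in \Phi^+} \frac{e^\alpha(C)+1}{e^\alpha(C)-1}(\alpha,\alpha_i)\, x_\alpha x_\alpha^-, \qquad i=1,\ldots,\rk\fg,$$
specialized at $C=C(\varepsilon)=C_0\exp(\varepsilon\chi)$, so that $e^\alpha(C(\varepsilon))=e^\alpha(C_0)\,e^{\varepsilon\alpha(\chi)}$.

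First I would separate the sum over roots into those lying in the root system $\Phi_{C_0}$ of $\fz_\fg(C_0)$ (where $e^\alpha(C_0)=1$) and those in its complement. For $\alpha\notin\Phi_{C_0}$ the coefficient $\frac{e^\alpha(C(\varepsilon))+1}{e^\alpha(C(\varepsilon))-1}$ stays finite, while for $\alpha\in\Phi^+_{C_0}$ it behaves like $\frac{2}{\varepsilon\,(\alpha,\chi)}$ (here $\chi$ is regular in $\fz_\fg(C_0)$, so $(\alpha,\chi)\neq0$). Hence $\sigma_i(C(\varepsilon))$ is of order $\varepsilon^{-1}$, and by the recipe of Subsection~\ref{limit} its rescaled limit is the leading term
$$\sum_{\alpha \in \Phi^+_{C_0}} \frac{2(\alpha,\alpha_i)}{(\alpha,\chi)}\, x_\alpha x_\alpha^- \ \in\ U(\fz_\fg(C_0)),$$
which, as $i$ runs over $1,\ldots,\rk\fg$ (using $\rk\fz_\fg(C_0)=\rk\fg$, so that the $\alpha_i$ span $\fh^*$), is exactly the quadratic part of the shift of argument subalgebra $\mathcal{A}_\chi\subset U(\fz_\fg(C_0))$. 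At the same time the generators $\tau_i(u,C(\varepsilon))$ depend analytically on $\varepsilon$ and specialize to $\tau_i(u,C_0)$, so $B(C_0)\subset \lim_{\varepsilon\to0}B(C(\varepsilon))$; in particular $Z(U(\fz_\fg(C_0)))\subset B(C_0)$ lies in the limit.

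The next step is to localize the limit inside an explicit subalgebra. Applying Lemma~\ref{le:knop} with $\fg_0=\fz_\fg(C_0)$ gives
$$\fz_{Y(\fg)}\big(Z U(\fz_\fg(C_0))\big) = Y(\fg)^{\fz_\fg(C_0)} \otimes_{U(\fz_\fg(C_0))^{\fz_\fg(C_0)}} U(\fz_\fg(C_0)).$$
Since the limit is commutative and contains $Z(U(\fz_\fg(C_0)))$, it lies in this centralizer. Because $B(C_0)$ is a maximal commutative subalgebra of $Y(\fg)^{\fz_\fg(C_0)}$ by Corollary~\ref{maxc}, and the limit also contains the quadratic part of $\mathcal{A}_\chi$ and must commute with it, commutativity constrains the $U(\fz_\fg(C_0))$-factor of the limit to the centralizer of that quadratic part — which, by genericity of $\chi$ and the last cited shift of argument theorem, is precisely $\mathcal{A}_\chi$ — and the $Y(\fg)^{\fz_\fg(C_0)}$-factor to $B(C_0)$. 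This yields $\lim_{\varepsilon\to0}B(C(\varepsilon))\subseteq B(C_0)\otimes_{Z(U(\fz_\fg(C_0)))}\mathcal{A}_\chi$.

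Finally I would close the argument by a Poincaré series computation: the right-hand side $B(C_0)\otimes_{Z(U(\fz_\fg(C_0)))}\mathcal{A}_\chi$ has the same Poincaré series as $B(C)$ for $C\in T^{reg}$ (the transcendence degrees of $\gr B(C_0)=\mathcal{A}_{\fz_\fg(C_0)}$ and of the shift of argument subalgebra add up correctly across the tensor product over the center), while a limit subalgebra has Poincaré series not smaller than that of the generic member; an inclusion of subalgebras with equal Poincaré series is an equality. The main obstacle I anticipate is the bookkeeping in the middle steps: verifying that the genuinely singular directions are exactly those of $\Phi_{C_0}$ and that the surviving finite terms together with the analytic limits of the $\tau_i$ really recover all of $B(C_0)$, and then justifying the clean split of the commutant in $Y(\fg)^{\fz_\fg(C_0)}\otimes U(\fz_\fg(C_0))$ into its two factors.
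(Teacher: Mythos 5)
Your proposal is correct and takes essentially the same approach as the paper: the paper's entire proof consists of the remark that one repeats the argument of the preceding theorem (the case $C_0=E$) verbatim, using Corollary~\ref{maxc} in place of Corollary~\ref{max1}, and your write-up is exactly that argument transplanted to $\fz_\fg(C_0)$ --- the same quadratic-part formula from \cite{ilin} (with the singular terms now coming precisely from the positive roots of $\fz_\fg(C_0)$, where $e^\alpha(C_0)=1$), the same analytic convergence of the $\tau_i(u,C(\varepsilon))$ giving $B(C_0)$ in the limit, the same use of Lemma~\ref{le:knop}, the same maximality-plus-centralizer-of-the-quadratic-part step, and the same Poincar\'e series comparison. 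The root-splitting and analyticity bookkeeping you supply is exactly what the paper leaves implicit in the phrase ``the proof is the same.''
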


\begin{proof}
The proof is the same as the proof of previous Theorem, with the only difference that we use Corollary \ref{maxc} instead of Corollary \ref{max1}.

\end{proof}

\begin{rem}
\emph{One can solve Vinberg's problem of lifting shift of argument subalgebras for generic $\chi\in\fh$ to the universal enveloping algebra by \emph{defining} the lifting of $A_\chi\subset S(\fg)$ to the universal enveloping algebra as $\mathcal{A}_\chi:=U(\fg)\cap \lim_{\varepsilon \to 0} B(C(\varepsilon))$ for $C(\varepsilon)=\exp(\varepsilon \chi)$.}
\end{rem}

\bigskip
\footnotesize{
{\bf Leonid Rybnikov} \\
National Research University
Higher School of Economics,\\ Russian Federation,\\
Department of Mathematics, 6 Usacheva st, Moscow 119048;\\
Institute for Information Transmission Problems of RAS;\\
{\tt leo.rybnikov@gmail.com}} \\
\\
\footnotesize{
{\bf Aleksei Ilin} \\ National Research University
Higher School of Economics, \\
Russian Federation,\\
Department of Mathematics, 6 Usacheva st, Moscow 119048;\\
{\tt alex\_omsk@211.ru}}

\end{document}